\tikzset{> = stealth}
\newtheorem{thm}{Theorem}[section]
\newtheorem{prp}[thm]{Proposition}
\newtheorem{lem}[thm]{Lemma}
\newtheorem{cor}[thm]{Corollary}
\theoremstyle{change}
\theoremstyle{nonumberplain}
\theoremstyle{definition}
\newtheorem{dfn}[thm]{Definition}
\theoremstyle{remark}
\numberwithin{equation}{section}
\begin{document}

\title[Stable embedding calculus and configuration spaces]{The stable embedding tower and operadic structures on configuration spaces}


\author{Connor Malin}
\address{University of Notre Dame}
\email{cmalin@nd.edu}

\begin{abstract}
    Given smooth manifolds $M$ and $N$, manifold calculus studies the space of embeddings $\operatorname{Emb}(M,N)$ via the ``embedding tower'', which is constructed using the homotopy theory of presheaves on $M$.  The same theory allows us to study the stable homotopy type of $\operatorname{Emb}(M,N)$ via the ``stable embedding tower''. By analyzing cubes of framed configuration spaces, we prove that the layers of the stable embedding tower are tangential homotopy invariants of $N$.
    
    If $M$ is framed, the moduli space of disks $E_M$ is intimately connected to both the stable and unstable embedding towers through the $E_n$ operad. The action of $E_n$ on $E_M$ induces an action of the Poisson operad $\mathrm{pois}_n$ on the homology of configuration spaces $H_*(F(M,-))$. 
    In order to study this action, we introduce the notion of Poincaré-Koszul operads and modules and show that $E_n$ and $E_M$ are examples. As an application, we compute the induced action of the Lie operad on $H_*(F(M,-))$ and show it is a homotopy invariant of $M^+$.
\end{abstract}
\maketitle

\section{Introduction}

 For fixed smooth manifolds $M,N$, the philosophy of manifold calculus is to approximate $\mathrm{Emb}(M,N)$ by looking at how $\mathrm{Emb}(-,N)$ behaves on the poset of open sets of $M$ which are diffeomorphic to $\sqcup_i \mathbb{R}^n$ as $i$ ranges from $0$ to $\infty$. Weiss used this approach to construct a tower of approximations, $T_i(F)$, for a ``good'' presheaf $f:\mathrm{Open}(M)^{\mathrm{op}} \rightarrow \mathrm{Top}_*$\cite{weiss_1999}.

Much is known about the convergence of this tower for the presheaf $\mathrm{Emb}(-,N)$, in particular it converges, i.e. $\mathrm{Emb}(M,N) \simeq T_{\infty}(\mathrm{Emb}(M,N))$, if the codimension of $M$ and $N$ is at least 3\cite[Corollary 4.2.4]{goodwillie_klein_weiss_2001}, and in the case $\mathrm{Dim}(M)=\mathrm{Dim}(N)=2$\cite{kupers_krannich_surfaces}. In the case $M=S^1$ and $\mathrm{Dim}(N)=4$,  the convergence of the tower was used to show the homotopy type of $\mathrm{Emb}(S^1,M)$, the space of smooth knots in $M$, is a homeomorphism invariant of $N$, if $N$ is simply connected\cite[Theorem A]{knudsen-kupers}.  Recently, it was shown that for compact spin manifolds of dimension at least five, the tower for $\mathrm{Emb}(M,M)=\mathrm{Diff}(M)$ does not converge\cite[Theorem C]{krannich_kupers_structure}. 

The tower provides interesting information even when it does not converge. Extensive calculation has been done in the case $M=S^1,N=\mathbb{R}^3$; in particular, many classical invariants of knots can be constructed via pulling back along the approximations $\mathrm{Emb}(S^1 ,\mathbb{R}^3) \rightarrow T_i(\mathrm{Emb}(S^1 ,\mathbb{R}^3))$\cite{kosanovic}. 

So what does $T_{\infty}(\mathrm{Emb}(M,N))$ describe in the cases it does not converge? Boavida de Brito and Weiss show that in the case of ``context free functors'' like $ \mathrm{Emb}(-,N)$ what the tower naturally computes is the derived mapping space of modules over the framed little disks operad $\mathrm{Map}^h_{ E^{\mathrm{fr}}_n}
     ( E^{\mathrm{fr}}_M, E^{\mathrm{fr}}_N)$\cite[Proposition 8.3]{brito_weiss_2013}. Here $E^{\mathrm{fr}}(M)$ denotes $\mathrm{Emb}(\sqcup_{i \in I}\mathbb{R}^n, M)$ as $I$ ranges over all finite sets. The $E^{\mathrm{fr}}_n$ operad acts upon this by precomposition. Moreover, the tower arises by considering mapping spaces of modules over truncations of the framed disks operad. In particular, the tower only depends on the module structures.

Kupers--Randall-Williams made complete computations of the rational homotopy groups of $B\mathrm{Diff}_\partial(D^{2d},D^{2d})$ outside of certain ``bands'' of degrees \cite{kupers_randal}. Here $B\mathrm{Diff}_\partial(D^{2d},D^{2d})$ denotes the diffeomorphisms of a disk relative the boundary. These calculations employ an alternative theory of manifold calculus for framed manifolds which is designed to study the space of framed embeddings, via derived maps of $E_n$-modules $\mathrm{Map}^h_{E_n}(E_M,E_N)$. The modules $E_M$ and $E_N$ are given by framed embeddings of disks into $M$. The addition of the framing information implies that $E_M \simeq F(M,-)$, the collection of all ordered configurations in $M$.
 
 Applying homology to $E_M$ gives the homology of configuration spaces of a framed manifold the algebraic structure of a $\mathrm{pois}_n$ module via the operad isomorphism $H_*(E_n )\cong \mathrm{pois}_n$\cite{sinha}. In particular, this algebraic structure is an invariant of the framed diffeomorphism type of $M$. In practical terms, this means that given an element $x \in H_*(F(M,I))$, an element $a \in I$, and a forest of binary trees with leaves labeled by $J$, we produce an element $y \in H_*(F(M, I-\{a\} \sqcup J))$ with some shift of homological degree. Moreover, these operations satisfy equivariance, associativity, Jacobi identities, and a derivation formula relating the action of a forest to the action of its trees.

In the first half of this paper, we begin to investigate the module structure of $E_M$. We define notions of Poincaré-Koszul operads and Poincaré-Koszul modules and observe the homology of these naturally comes with a Koszul self duality map. We use an observation of Salvatore\cite{salvatore_2021} that the Fulton-MacPherson compactifications $\mathcal{F}_n,\mathcal{F}_M$ are naturally isomorphic to certain cofibrant replacements to deduce these operads and modules are Poincaré-Koszul.

\begin{thm}
For a tame, framed $n$-manifold $M$ without boundary, there are isomorphisms of modules 
\[H_*(E_M) \cong s_{(n,n)}H_*(K(E_{M^+}))\]
compatible with the isomorphism of operads
\[H_*(E_n) \cong s_n H_*(K(E_n)).\]

\end{thm}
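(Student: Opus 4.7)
The plan is to apply the Poincaré-Koszul framework introduced earlier in the paper to the Fulton-MacPherson models $\mathcal{F}_n$ and $\mathcal{F}_M$, exploiting Salvatore's observation that these are strict cofibrant replacements for $E_n$ and $E_M$. By construction, the Poincaré-Koszul structure on an operad (resp.\ module) should produce a natural self-duality map on homology induced by Poincaré-Lefschetz duality applied on each arity; the theorem is then obtained by specializing this general machine to $E_n$ and $E_M$ and verifying that the indicated shifts match.

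First, I would record the self-duality maps that a Poincaré-Koszul structure produces on homology. On the operad side, the geometric duality on each arity should give $H_*(\mathcal{O}) \to s_n H_*(K(\mathcal{O}))$, with the shift $s_n$ reflecting the arity-wise dimension. On the module side, the analogous construction should give $H_*(\mathcal{M}) \to s_{(n,n)} H_*(K(\mathcal{M}'))$, with one copy of $n$ coming from the operadic structure and the second coming from the dimension of the base manifold. Crucially, both maps are induced by the same stratified Poincaré-Lefschetz duality, so compatibility of the operad and module isomorphisms is automatic once the Poincaré-Koszul property is established.

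Second, I would verify the Poincaré-Koszul hypothesis for $\mathcal{F}_n$ and $\mathcal{F}_M$. Each arity of $\mathcal{F}_n$ is a manifold with corners whose boundary stratification matches the tree-combinatorics of the bar construction; this is the standard picture. For $\mathcal{F}_M(k)$, I would check that the open stratum is the $nk$-dimensional configuration space $F(M,k)$ and that the boundary strata are indexed by trees having one distinguished ``manifold'' vertex (carrying a configuration in $M$) and collision vertices (carrying infinitesimal configurations in $\mathbb{R}^n$). This is precisely the decomposition needed to view the stratified geometric duality as a Koszul self-duality for a Poincaré-Koszul module.

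Third, I would identify $K(E_{M^+})$ explicitly. Poincaré-Lefschetz duality on the non-compact stratum $F(M,k)$ pairs ordinary homology with compactly-supported cohomology, and the latter naturally involves $F(M,k)^+$; tracking how points escaping to infinity interact with the collision strata, this rewrites as configurations in $M^+$ based at the point at infinity, which is exactly $E_{M^+}$. The main obstacle I anticipate is this identification: showing that the Koszul dual module produced by the general machine is genuinely $E_{M^+}$ rather than an abstract cooperadic object, and that the $E_n$-module map induced by geometric duality intertwines the expected $K(E_n)$-module structure on $s_{(n,n)}H_*(K(E_{M^+}))$. Both issues reduce to carefully aligning the stratified Poincaré-Lefschetz pairing with the operadic composition on $\mathcal{F}_M$ over $\mathcal{F}_n$.
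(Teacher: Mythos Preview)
Your proposal is correct and follows essentially the same route as the paper: pass to the Fulton--MacPherson models, invoke Salvatore's isomorphism $W(\mathcal{F}_M)\cong\mathcal{F}_M$ (adapted from the operad case) to exhibit $\mathring{W}(\mathcal{F}_M)$ as an $(n,n)$-dimensional manifold module over $\mathring{W}_{(0,\infty)}(\mathcal{F}_n)$, and then apply Poincar\'e duality for manifold modules.

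The one place where the paper is sharper than your sketch is precisely the obstacle you flag in your third step. Rather than arguing via compactly supported cohomology and tracking points escaping to infinity, the paper observes that $\mathcal{F}_M$ is a \emph{proper} module over the compact operad $\mathcal{F}_n$, so that $(\mathcal{F}_M)^+=:\mathcal{F}_{M^+}$ is itself a pointed module modelling $E_{M^+}$, and then Proposition~\ref{prp:barM} gives an on-the-nose isomorphism of comodules $\mathring{W}(\mathcal{F}_M)^+\cong B(\mathcal{F}_{M^+})$. This identifies the Spanier--Whitehead dual appearing in manifold-module Poincar\'e duality directly with $K(\mathcal{F}_{M^+})$, dissolving your anticipated obstacle in one stroke; compatibility with the operad isomorphism then follows because the same mechanism (Proposition~\ref{prp:bar}) was used for $\mathcal{F}_n$.
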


As an application we show,

\begin{cor}
For a tame, framed $n$-manifold $M$, the shifted Lie module $\mathrm{res}_{\mathrm{s_n lie}}H_*(E_{M}) $ is a homotopy invariant of $M^+$.
\end{cor}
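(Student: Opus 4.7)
The plan is to deduce the corollary directly from the theorem, once the Koszul dual side is unpacked.

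First, the theorem provides an isomorphism $H_*(E_M) \cong s_{(n,n)} H_*(K(E_{M^+}))$ compatible with the self-duality $\mathrm{pois}_n \cong H_*(E_n) \cong s_n H_*(K(E_n))$. Under this self-duality, the suboperad $\mathrm{s_n lie} \hookrightarrow \mathrm{pois}_n$ corresponds to the canonical map from the shifted Lie operad into $s_n H_*(K(E_n))$ coming from operadic Koszul duality. Consequently $\mathrm{res}_{\mathrm{s_n lie}} H_*(E_M)$ is identified, via the theorem, with the shifted Lie module structure on $s_{(n,n)} H_*(K(E_{M^+}))$ inherited from the Koszul dual operad. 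Thus it suffices to show that this Koszul dual module is a homotopy functor of the pointed space $M^+$.

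Second, I would verify that $K(E_{M^+})$, regarded as a module over the Koszul dual of $E_n$, depends only on the pointed homotopy type of $M^+$. The object $E_{M^+}$ is built from $M^+$ together with the operad $E_n$, and the bar construction $K$ is a homotopy functor on cofibrant inputs. Combined with the Poincar\'e-Koszul property for $E_M$ established earlier in the paper via the Fulton-MacPherson compactification, this should yield that a pointed weak equivalence $M^+ \simeq N^+$ induces an equivalence $K(E_{M^+}) \simeq K(E_{N^+})$ of Koszul dual modules, and hence, after applying $H_*$, an equivalence of shifted Lie modules under restriction.

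The principal obstacle will be ensuring that the Fulton-MacPherson model for $E_{M^+}$ behaves well under replacing $M^+$ by a weakly equivalent pointed space, since configuration spaces themselves are famously not homotopy invariants of $M$. The expected resolution is that the relevant associated-graded pieces of the bar filtration take the form of smash powers of $M^+$ (possibly with diagonal relations), which are manifestly pointed homotopy functors of $M^+$; the non-invariant information in $E_M$ is precisely what is killed by $K$. Applying $H_*$ and the double shift $s_{(n,n)}$ then delivers the claimed homotopy invariance of $\mathrm{res}_{\mathrm{s_n lie}} H_*(E_M)$.
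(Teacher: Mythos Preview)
Your overall strategy is sound: reduce via the theorem to showing that the Lie-restricted Koszul dual module is a pointed homotopy functor of $M^+$. However, your second and third paragraphs leave the crucial step as an ``expected resolution'' rather than an argument, and that gap is exactly where the paper supplies a concrete mechanism you have not identified.

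The paper does not try to show that $K(E_{M^+})$ as a $K(E_n)$-module is a homotopy invariant of $M^+$. Instead it introduces the $\mathrm{com}$-module $(M^+)^{\wedge}$ given by $I \mapsto (M^+)^{\wedge I}$ with diagonal structure maps, and uses the collapse-of-infinitesimals map $\mathcal{F}_{M^+} \to (M^+)^{\wedge}$, which lies over the operad map $(\mathcal{F}_n)_+ \to \mathrm{com}$. The induced map on bar constructions $B(\mathcal{F}_{M^+}) \to B((M^+)^{\wedge})$ (the first taken over $\mathcal{F}_n$, the second over $\mathrm{com}$) is an equivalence; dualizing yields an equivalence $K((M^+)^{\wedge}) \xrightarrow{\sim} K(\mathcal{F}_{M^+})$ of $K(\mathrm{com})$-modules, i.e.\ of $s_n\mathrm{lie}$-modules. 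The left-hand side is manifestly a pointed homotopy functor of $M^+$, and combining with the Poincar\'e--Koszul isomorphism of the theorem gives the corollary.

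This is precisely the rigorous form of your ``associated-graded pieces are smash powers with diagonal relations'' intuition, but packaged as a single equivalence rather than a filtration argument. Note also that it only pins down the $s_n\mathrm{lie}$-restriction: your second paragraph aims to show the full $K(E_n)$-module $K(E_{M^+})$ is a homotopy invariant of $M^+$, which is stronger than needed, not obviously true, and not what the paper claims. The passage through $\mathrm{com}$ is what both makes the invariance visible and explains why only the Lie part of the $\mathrm{pois}_n$-action is controlled by $M^+$.
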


This adds to a growing list of results about the interaction of higher algebraic structures and configuration spaces. Campos--Willwacher show a characteristic 0 cochain version of this statement which takes into account the full action of the $E_n$ operad \cite[Theorem 1]{campos2022model}. In a different vein, Petersen shows various homotopy invariance results for the compactly supported cohomology of configuration spaces. In particular, $H_c^*(F(X,k);\mathbb{Z})$ depends only $C_c^*(X;\mathbb{Z})$ as an $E_\infty$-algebra \cite[Corollary 5.16]{Petersen_2019}.

In the second half of this paper, we study the homotopy invariance of manifold calculus. In the case of the presheaf $\mathrm{Emb}(-,N)$, or equivalently the module $E^{\mathrm{fr}}_N$, a natural question is how the associated tower behaves under homotopy equivalence of $N$. From now on we assume $M,N$ are diffemorphic to the interiors of the compact manifolds with (possibly empty) boundary $\bar{M}$ and $\bar{N}$.

The $i$th layer of the tower, defined as \[D_i \mathrm{Emb}(-,N):=\mathrm{hofib}(T_i(\mathrm{Emb}(-,N))\rightarrow T_{i-1}(\mathrm{Emb}(-,N))),\] is given in terms of section spaces where the fibers are determined by cubes of framed configuration spaces of $N$ (if $i>2$ the fibers are also determined by the unframed configuration spaces \cite[Section 9]{weiss_1999}). It is well known that in general $F(N,k)$ is not a homotopy invariant of N\cite{longoni_salvatore_2005}, so a straightforward equivalence of the layers, let alone the towers, is out of the question.

There are, however, results about the homotopy invariance of the suspension spectra of configuration spaces. Aouina and Klein gave the original nonequivariant answer in the case of a PL manifold without boundary\cite[Theorem A]{aouina_klein_2004}, and recently Knudsen and the author separately gave equivariant results for tame topological manifolds\cite[Theorem C]{knudsen_2018}\cite[Theorem 1.1]{malin}. One might conjecture that this invariance should have consequences in stable embedding calculus, i.e. calculus applied to the functor $\Omega^\infty (\Sigma^\infty_+ \mathrm{Emb}(-,N) \wedge E)$ for some spectrum $E$.

 While less is known about the stable embedding tower, if we assume the same codimension restrictions, one can expect convergence of the stable embedding tower if one manually kills off the first few terms in the tower\cite[Theorem 2.2]{weiss_2004}. Arone--Lambrechts--Volic have studied the stable embedding tower for \[\Omega^\infty (\mathrm{hofib}(\mathrm{Emb}(M,\mathbb{R}^d) \rightarrow \mathrm{Imm}(M,\mathbb{R}^d))\wedge H\mathbb{Q})\] and used it to show that the rational homology of $(\mathrm{hofib}(\mathrm{Emb}(M,\mathbb{R}^d) \rightarrow \mathrm{Imm}(M,\mathbb{R}^d))$ for large codimension depends only on the rational homology type of $M$\cite[Theorem 1.8]{arone_lambrechts_volic_2007}.
 
 The goal of the second half of this paper is to show that the layers of the embedding tower for any space valued presheaf of the form $F \circ \Sigma^\infty_+ \mathrm{Emb}(-,N)$ are a tangential homotopy invariant of $(\bar{N},\partial \bar{N})$, which, in particular, implies it is a proper tangential homotopy invariant of $N$. A similar result for the stable orthogonal tower was proven for parallelizable manifolds by Arone\cite[Corollary 1.13]{arone_2009}.

We approach the problem of the tangential invariance of the layers by generalizing the invariance of stable configurations in two ways. First, we allow for framings. More precisely, if $\xi \rightarrow N$ is a bundle with compact manifold fibers (possibly with boundary), then $F^\xi(N,k)$ is defined to be $\xi^k|_{F(N,k)}$. We show that $\Sigma^\infty_+ F^\xi(N,k)$ is a relative fiberwise homotopy invariant of $\xi$. Second, we show that the cube $J \subset I$, $J \rightarrow \Sigma^\infty_+ F^\xi(N,J)$, with maps given by projection, is a relative fiberwise homotopy invariant of $\xi$. Though there is a formidable amount of notation and bookkeeping, the heart of this argument is a combination of two definitions: Definition \ref{def:thick} which allows us to use duality to pass to quotients of bundles and Definition \ref{def:cyl} which extends these quotients along a mapping cylinder.

Applying this analysis to the orthonormal frame bundle, combined with the aforementioned description of the layers in terms of framed configuration spaces yields:

\begin{thm}
For a smooth, tame $n$-manifold $N$ and functor $F:\mathrm{Sp} \rightarrow \mathrm{Top}_*$ that preserves weak equivalences, the layers of the tower for $F \circ \Sigma^\infty_+ \mathrm{Emb}(-,N)$ are relative tangential homotopy invariants of $(\bar{N},\partial \bar{N})$.
\end{thm}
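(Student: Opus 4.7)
The plan is to reduce the statement to the cube-invariance result established earlier in the second half of the paper, using Weiss's explicit description of the layers of the embedding tower. First I would fix a relative tangential homotopy equivalence $g \colon (\bar{N}_0, \partial \bar{N}_0) \to (\bar{N}_1, \partial \bar{N}_1)$, that is, a map of pairs inducing a homotopy equivalence and covered by a bundle equivalence $T\bar{N}_0 \simeq g^* T\bar{N}_1$. Passing to interiors and associated orthonormal frame bundles, this produces a relative fiberwise homotopy equivalence $\mathrm{Fr}(N_0) \simeq g^* \mathrm{Fr}(N_1)$, which is the piece of tangential data the proof transports.

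Next I would recall the description of the layer $D_i \mathrm{Emb}(-,N)$ as a space of sections of a bundle over a configuration-type base in $M$ whose fibers are a total (co)fiber of a cubical diagram $J \subset I \mapsto \mathrm{Emb}(\sqcup_J \mathbb{R}^n, N)$. Identifying framed embeddings of Euclidean spaces with framed configurations in the sense used earlier gives an identification of the cube, up to weak equivalence, with $J \mapsto F^{\mathrm{Fr}(N)}(N, J)$. After applying $F \circ \Sigma^\infty_+$, the fibers of the layer bundle for $F \circ \Sigma^\infty_+ \mathrm{Emb}(-,N)$ become $F$ applied to the total cofiber of the cube $J \mapsto \Sigma^\infty_+ F^{\mathrm{Fr}(N)}(N, J)$.

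I would then apply the main technical input of the second half of the paper: the cube $J \subset I \mapsto \Sigma^\infty_+ F^\xi(N, J)$ is a relative fiberwise homotopy invariant of $\xi$. Specializing $\xi = \mathrm{Fr}(N)$ and feeding in the equivalence coming from $g$, one obtains a level-wise equivalence of cubes of spectra compatible with the structure maps. Since $F$ preserves weak equivalences by hypothesis, the induced map on total cofibers is an equivalence, hence the layer bundles are fiberwise equivalent over the common $M$-side base; passing to sections preserves fiberwise equivalences, giving the desired equivalence of layers.

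The main obstacle is not the homotopical end-game but the bookkeeping required to match Weiss's cube $J \mapsto \mathrm{Emb}(\sqcup_J \mathbb{R}^n, N)$ with the framed configuration cube $J \mapsto F^{\mathrm{Fr}(N)}(N,J)$ to which the invariance theorem applies, and to propagate the \emph{relative} structure (the behavior near $\partial \bar{N}$) through this identification, through the section-space construction, and through the hypothesis that $g$ is only a tangential equivalence of pairs rather than a diffeomorphism. Once this identification is in place, all remaining steps are formal consequences of the cube-invariance theorem and the fact that section spaces over a fixed base respect fiberwise weak equivalences.
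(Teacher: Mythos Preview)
Your proposal is correct and matches the paper's approach: invoke Weiss's description of the $i$th layer as a section space over $F(\mathring{M},i)/\Sigma_i$ with fibers the total homotopy fiber of the framed-configuration cube, specialize the cube-invariance theorem to $\xi=\mathrm{Fr}(N)$, apply $F$ levelwise, and pass to sections. Two small corrections. First, the fibers are $\mathrm{totfib}\bigl(F(\Sigma^\infty_+ F^{\mathrm{fr}}(N,2^I))\bigr)$, not $F$ applied to a total (co)fiber as you write in your second paragraph; a general weak-equivalence-preserving $F$ need not commute with $\mathrm{totfib}$, but your third paragraph already carries out the correct order (apply $F$ levelwise to the equivalent cubes, then take $\mathrm{totfib}$). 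Second, the paper makes explicit the point you only gesture at: the zigzag witnessing cube-invariance depends on a single fixed embedding of $\xi_{\mathrm{cyl}}$ and not on the indexing set $I$, so it parametrizes over the cardinality-$i$ subsets of $\mathring{M}$ to yield a fiberwise equivalence of the classifying bundles, which is what justifies passing to section spaces.
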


\section{(co)Operads and (co)modules}
While operads have an incredibly rich and complex homotopy theory, they are often accessible via geometric means. The interaction of operads and configuration spaces was noticed by Cohen, who, in calculating the homology of the configuration spaces of $\mathbb{R}^n$, calculated the homology of the little disks operad $E_n$\cite{cohen_lada_may_1976}. For an account of this calculation which emphasizes the relation with the Poisson operad $\mathrm{pois}_n$, see\cite{sinha}. 

There are many equivalent ways to define a (co)operad each with its own advantages. The definitions we lay out here, in terms of partial (de)composites, are particularly suited for our geometric perspective. The starting point for most of these definitions are symmetric sequences in a symmetric monoidal category $(C,\otimes)$. For us $(C,\otimes)$ will usually be one of $(\mathrm{Top},\times),(\mathrm{Top}_*,\wedge),(\mathrm{Sp},\wedge),(\mathrm{dgVect}_k,\otimes)$. By $(\mathrm{Top},\times), (\mathrm{Top}_*,\wedge)$ we mean a convenient category of topological spaces and by $(\mathrm{Sp},\wedge)$ we mean a symmetric monoidal category of spectra. 

\begin{dfn}
A symmetric sequence in $C$ is a functor from the category of nonempty finite sets and bijections to $C$.
\end{dfn}

Given finite sets $I,J$ with $a \in I$, we define  the infinitesimal composite $I \cup_a J:=I -\{a\} \sqcup J$. The combinatorics of infinitesimal composites will dictate the axioms of an operad.

\begin{dfn}
An operad in $(C,\otimes)$ is a symmetric sequence $O$ in $C$ together with partial composites:
\[O(I) \otimes O(J) \rightarrow O(I \cup_a J)\]
for all $a \in I$. These satisfy straightforward equivariance, unital, and associativity conditions.
\end{dfn}

\begin{dfn}
A cooperad in $(C,\otimes)$ is a symmetric sequence $P$ in $C$ together with partial decomposites:
\[P(I \cup_a J) \rightarrow P(I) \otimes P(J)\]
for all $a \in I$. These satisfy straightforward equivariance, counital and  coassociativity conditions.
\end{dfn}

For a thorough introduction to (co)operads using these definitions, we refer to\cite[Section 2]{ching_2005}. From now on we restrict to reduced (co)operads, i.e. those cooperads whose underlying symmetric sequence $S$ has  $S(\{*\})$ equal to the unit of $\otimes$. Any operad $O$ in $(\mathrm{Top},\times)$ with $O(\{*\})\simeq *$ is equivalent to a reduced operad by replacing $O(\{*\})$ by the singleton set containing its unit. For example, the $E_n$ operad is not reduced, but $E_n(\{*\})\simeq *$ with a canonical basepoint given by the identity. When we refer to $E_n$, we implicitly make this reduction.

\begin{dfn}
A right module $R$ over an operad $O$ in $(C,\otimes)$ is a symmetric sequence $R$ in $C$ with partial composites:
\[R(I) \otimes O(J) \rightarrow R(I \cup_a J)\]
for all $a \in I$. These satisfy straightforward equivariance, unital, and associativity conditions.

\end{dfn}

\begin{dfn}
A right comodule $R$ over a cooperad $P$ in $(C,\otimes)$ is a symmetric sequence $R$ in $C$ together with partial decomposites:

\[R(I \cup_a J) \rightarrow R(I) \otimes P(J)\]
for all $a \in I$. These satisfy straightforward equivariance, counital, and coassociativity conditions.
\end{dfn}

\section{The module structure on homology}

\label{sec:algduality}
The homology of configuration spaces is intertwined with the Lie operad in two distinct ways. Knudsen shows that the rational homology of unordered configuration spaces can be calculated as the Lie algebra homology of a certain Lie algebra associated to the manifold\cite[Theorem 1.1]{knudsen_2017}. However, there is also a more direct connection between the homology of configuration spaces and the Lie operad which we now describe.

For a fixed field $k$, consider the category $(\mathrm{dgVect}_k,\otimes)$ of chain complexes. We have a few notable operads: $\mathrm{lie},\mathrm{com}$ and $\mathrm{pois}_n$, all endowed with the trivial differential. The first governs graded Lie algebras with a bracket of degree $-1$, the middle governs graded commutative algebras, and the latter governs n-Poisson algebras which have a Lie bracket of degree $n-1$ and a graded commutative product for which the Lie bracket is a derivation. There are maps of operads $\mathrm{lie} \rightarrow s_{-n} \mathrm{pois}_n \rightarrow s_{-n} \mathrm{com}$, where $s_{-n}$ is a shifting operation defined in Section \ref{sec:duality} The first encodes forgetting the commutative product and the latter encodes adding a trivial Lie bracket to a commutative algebra. Classically, it is known if the characteristic of $k$ is 0, there is an equivalence between suitably connected categories of cocommutative coalgebras and Lie algebras. This is an instance of Koszul duality, which underlies an operadic Koszul duality functor $\mathrm{Operad}((C,\otimes)) \xrightarrow{K} \mathrm{Operad}((C,\otimes))$ where $(C,\otimes)=(\mathrm{dgVect_k},\otimes)$. For a thorough discussion of Koszul duality for algebras and operads, we refer to\cite{loday_vallette_2012}. There is a remarkable fact due to Getzler-Jones that $H_*(K(\mathrm{lie})) \leftarrow H_*(K(s_{-n}\mathrm{pois}_n)) \leftarrow H_*(K(s_{-n}\mathrm{com}))$ is isomorphic to $\mathrm{com} \leftarrow \mathrm{pois}_n \leftarrow s_n \mathrm{lie}$\cite{getzler_jones}.

As mentioned before, $H_*(E_n)\cong\mathrm{pois_n}$. Hence, if $R$ is a module over $E_n$, then $H_*(R)$ is a module over $\mathrm{pois}_n$. We would like to construct such a module where $R(I) \simeq F(M,I)$, the $I$-labeled configurations of $M$. The quickest way to define such a module is as follows:

For framed $n$-manifolds $M,N$ define $\mathrm{Emb}^{\mathrm{fr}}(M,N)$ as the space of embeddings from $M$ to $N$ with the information an isotopy of the framing of $M$ to agree with its image \cite[Definition 2.8]{malin2023koszul}. Ayala-Francis show the symmetric sequence $\mathrm{Emb}^{\mathrm{fr}}(\sqcup_{i \in I}\mathbb{R}^n,\mathbb{R}^n)$ forms an operad equivalent to $E_n$\cite[Remark 2.10]{ayala_francis_2015}. Then for any framed manifold $M$, the symmetric sequence $\mathrm{Emb}^{\mathrm{fr}}(\sqcup_{i \in I}\mathbb{R}^n,M)$ is a right module by precomposition of framed embeddings, which as a symmetric sequence is equivalent to $F(M,I)$. For convenience, we refer to these modules as $E_M$. Applying homology endows $H_*(F(M,-))$ with structure of a $\mathrm{pois}_n$ module. 

We let $E_{M^+}$ denote the quotient of $E_M$ which collapses those configurations of disks for which one is mapped into $\infty$ \cite[Definition 2.14]{malin2023koszul}. Again, by radial contraction, the $I$th space of this module has the homotopy type of \[\{(x_i) \in (M^+)^{\wedge I}| x_i = x_j \Rightarrow i = j\} \cup \{(*,*,\dots,*)\}.\]


This first half of the paper concerns the $\mathrm{pois}_n$ module structure on $H_*(F(M,-))$. In particular, we show that $\mathrm{res}_{s_n\mathrm{lie}}H_*(F(M,-))$, the module restriction along the morphism $s_n\mathrm{ lie}\rightarrow \mathrm{pois}_n$, is a homotopy invariant of $M^+$, which is somewhat surprising as the construction of this action makes heavy use of the locally Euclidean nature of $M$. 

\section{Operad-Cooperad duality}

In this section, we explain how the notion of the Koszul dual cooperad, due to Ching, Salvatore\cite[Section 4.1]{ching_2005}\cite{salvatore_1998}, factors as a composition of a homotopical construction followed by one point compactification. Recall that one point compactifications $(-)^+$ of locally compact spaces exist and are covariant with respect to proper maps and contravariant with respect to open inclusions. From now on, all (co)operads are reduced.

\begin{dfn}
An open operad $O$ is an operad in $(\mathrm{Top},\times)$ such that all partial compositions $O(I) \times O(J) \rightarrow O(I \cup_a J)$ are open embeddings.
\end{dfn}

For example,  Arone-Kankaanrinta  construct operad structures on the symmetric sequences $(\mathbb{R}^n)^I /\{\mathrm{translation}\}$ where the partial composites are homeomorphisms, hence open embeddings. Taking one point compactifications then yields ``sphere operads'' which are further examples of open operads\cite{arone_kankaanrinta_2013}. However, in general, the one point compactification of an operad will not be an operad.

For a symmetric sequence $S$ in $(\mathrm{Top},\times)$, we let $S^+$ denote the symmetric sequence in $(\mathrm{Top}_*,\wedge)$ given by $S^+(I)=S(I)^+$.

\begin{prp}
Let $O$ be an open operad. Then $O^+$ is naturally a cooperad in pointed spaces.
\end{prp}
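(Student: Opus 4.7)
The plan is to construct partial decompositions via a Pontryagin–Thom collapse map for open embeddings. Given any open embedding $i\colon U \hookrightarrow X$ of locally compact Hausdorff spaces, I would define a natural collapse map $c_i\colon X^+ \to U^+$ sending each $x \in i(U)$ to $i^{-1}(x)$ and every other point (including $\infty$) to the basepoint. Continuity follows because a compact subset of $U$ remains compact, hence closed, in $X$, so preimages of the cobasic open neighborhoods of the basepoint in $U^+$ are open in $X^+$. A direct check shows this assignment is functorial in open embeddings: if $j\colon V \hookrightarrow U$ is another open embedding, then $c_{i \circ j} = c_j \circ c_i$.

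Applying $c$ to the open embeddings $\circ_a\colon O(I) \times O(J) \hookrightarrow O(I \cup_a J)$ and combining with the canonical homeomorphism $(X \times Y)^+ \cong X^+ \wedge Y^+$ available for locally compact Hausdorff $X,Y$, I obtain candidate partial decompositions
\[O(I \cup_a J)^+ \xrightarrow{c_{\circ_a}} (O(I) \times O(J))^+ \cong O(I)^+ \wedge O(J)^+.\]
Equivariance under the symmetric group actions is immediate from equivariance of $\circ_a$ together with the naturality of $c$ with respect to homeomorphisms. The reduction condition $O^+(\{*\}) = S^0$ is inherited from the reduction of $O$.

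For coassociativity, I would unpack the two composites required in the cooperad axiom and show that each is obtained by applying $(-)^+$ and $c$ to one of the two sides of the commutative square of open embeddings that witnesses associativity of $\circ_a$ in $O$. By the functoriality $c_{i\circ j} = c_j \circ c_i$ noted above, both sides of the resulting square agree with $c$ applied to the same doubly-composed open embedding, so the square commutes on the nose. The main obstacle is really just careful bookkeeping of the combinatorics of double composites (and their interaction with the smash-product decomposition), together with ensuring that the local-compactness hypothesis on the spaces $O(I)$ — implicit in the phrase ``convenient category'' — is available so that $(-)^+$ is well behaved as a functor on open embeddings and turns products into smash products.
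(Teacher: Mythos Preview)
Your proposal is correct and follows exactly the paper's approach: the paper's proof is the single sentence ``One point compactification is contravariant with respect to open embeddings, hence we have maps $O(I \cup_a J)^+ \rightarrow (O(I) \times O(J))^+ = O(I)^+ \wedge O(J)^+$,'' and you have simply unpacked this by constructing the collapse map explicitly, verifying its functoriality, and deducing equivariance and coassociativity from it. Your added care about local compactness and the smash-product identification is appropriate detail that the paper leaves implicit.
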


\begin{proof}
One point compactification is contravariant with respect to open embeddings, hence we have maps $O(I \cup_a J)^+ \rightarrow (O(I) \times O(J))^+ = O(I)^+ \wedge O(J)^+$. 
\end{proof}

We recall the definitions of the W-construction and bar construction for a reduced operad $O$ in $(\mathrm{Top}_*,\wedge)$ in terms of trees. For a quick treatment, see\cite[Section 6]{ching_salvatore}.

The following is originally due to Boardman-Vogt\cite{boardman_vogt_1973}:
\begin{dfn}
Given an operad O in $(\mathrm{Top}_*,\wedge)$, let $T(O)$ denote the space
of rooted trees with the property that the root has a single adjacent edge and every internal vertex has at least 2 children with labels as follows: the internal vertices are labeled by $O$ while nonroot and nonleaf adjacent edges are
labeled by an element of $[0, \infty]$; any tree with a vertex labeled by the basepoint is collapsed to a single point. We let $W(O)$ denotes the quotient of $T(O)$ by the relation that any length 0 edge can be collapsed
by applying operadic partial composition.
$W(O)$ is an operad by grafting trees via length $\infty$ edges.
\end{dfn}

In the case $O$ is an operad in unpointed spaces, we will let $W(O)$ denote the operad in unpointed spaces given by $W(O_+)(I)-\{*\}$.  It is well known that operad composition induces an equivalence of operads $W(O) \rightarrow O$.

\begin{dfn}
Given an operad $O$ in $(\mathrm{Top}_*,\wedge)$, let $B(O)$ denote $W(O)\wedge
(0, \infty)^+$, where the $(0, \infty)$ coordinate is interpreted a labeling of the edge adjacent to the root, modulo the relations that any tree with an $\infty$ length edge is identified with
the basepoint.

This is a cooperad via returning the unique decomposition of trees compatible with the decomposition $I \cup_a J$ (or sending to the basepoint if no such decompostion exists).

\end{dfn}

If $O$ is an operad in $(\mathrm{Top},\times)$ we let $B(O)$ denote $B(O_+)$. An important difference in how we define $W(O)$ and $B(O)$ for an operad in unpointed spaces is how we treat basepoints. $W(O)$ is an operad in $(\mathrm{Top},\times)$ while $B(O)$ is a cooperad in $(\mathrm{Top}_*,\wedge)$.

For the rest of this section, we fix an operad $O$ in $(\mathrm{Top},\times)$.

\begin{dfn}
Let $\partial W(O)$ represent the subspace of $W(O)$ with some edge of length $\infty$. We let $\mathring{W}(O):=W(O)-\partial W(O)$
\end{dfn}

It is easy, but not necessary, to check that $\partial W(O)$ is a model of the derived decomposables of $O$.

\begin{dfn}
Let $\mathring{W}_{(0,\infty)}(O):=\mathring{W}(O) \times (0,\infty)$. It is an operad via grafting trees by an edge determined by the second trees $(0,\infty)$ coordinate. Similarly, define $W_{[0,\infty]}:=W(O) \times [0,\infty]$ by grafting via the $[0,\infty]$ coordinate.
\end{dfn}

As before, we think of the $(0,\infty)$ or $[0,\infty]$ factor as the weight of the root edge.

\begin{dfn}
An operad $O$ is compact if it is made up of compact spaces.
\end{dfn}

\begin{prp}\label{prp:bar}
 $\mathring{W}_{(0,\infty)}(O)$ is an open operad, and if $O$ is compact then \[\mathring{W}_{(0,\infty)}(O)^+ \cong B(O).\]
\end{prp}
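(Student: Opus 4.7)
The plan is to dispatch the two assertions in order. For the first, I would unpack the operad structure on $\mathring{W}_{(0,\infty)}(O)$ explicitly: a point is a pair $(T, r)$ where $T$ is an $I$-labeled tree with all internal edge lengths in $[0,\infty)$ and $r \in (0,\infty)$ represents the root-edge length. The partial composition sends $((T_1, r_1),(T_2, r_2))$ to the pair consisting of the tree obtained by grafting $T_2$ onto the $a$-labeled leaf of $T_1$ along a new internal edge of length $r_2$, together with root length $r_1$. Since $r_1, r_2 > 0$ and the internal edges of $T_1, T_2$ had finite length, the composite lies in $\mathring{W}_{(0,\infty)}(O)(I \cup_a J)$. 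To verify this is an open embedding, note that the leaf labeling canonically identifies the grafting edge as the edge above the highest vertex whose subtree spans exactly the $J$-labeled leaves; this gives both injectivity and a continuous inverse on the image. Openness of the image follows since the condition ``the grafting edge has strictly positive length'' is stable under edge-length perturbations.

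For the isomorphism $\mathring{W}_{(0,\infty)}(O)^+ \cong B(O)$, I would begin with the set-theoretic matching: both spaces consist of pairs $(T, r)$ as above, together with a basepoint. The heart of the topological comparison is the identification
\[ B(O)(I) \cong K / (K \setminus X), \]
where $K := W(O_+)(I) \times (0,\infty)^+$ and $X := \mathring{W}(O)(I) \times (0,\infty)$. The smash product quotient collapses the axes $W(O_+)(I) \times \{*\}$ and $\{*\} \times (0,\infty)^+$, while the additional relation in $B(O)$ identifying trees with $\infty$-length internal edges to the basepoint precisely covers $\partial W(O)(I) \times (0,\infty)$; together these account for all of $K \setminus X$.

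Now the compactness hypothesis on $O$ enters: $W(O_+)(I)$ is compact Hausdorff, and $(0,\infty)^+ \cong S^1$ is compact Hausdorff, so $K$ is compact Hausdorff with the closed subset $K \setminus X$. The standard identification of quotients with one-point compactifications, $K/(K \setminus X) \cong X^+$, yields a continuous bijection from the compact space $B(O)(I)$ to the Hausdorff space $\mathring{W}_{(0,\infty)}(O)(I)^+$, hence a homeomorphism. This is manifestly natural in $I$, and the cooperad structures agree because both decompositions are defined by cutting a composite tree at its unique ``separating'' edge (or sending to basepoint when no such edge exists): on the $\mathring{W}_{(0,\infty)}(O)^+$ side this is the $(-)^+$ of the open embeddings from the first part, and on the $B(O)$ side this is the explicit construction.

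The main obstacle I anticipate is bookkeeping around the compactification conventions. In particular, it is important that $(0,\infty)^+$ in the definition of $B(O)$ be interpreted as the one-point compactification, so that both root-length $\to 0$ and root-length $\to \infty$ limit to the basepoint; otherwise $B(O)(I)$ would fail to be compact and the desired homeomorphism would break. Once this convention is pinned down, the argument reduces to elementary facts about open subspaces of compact Hausdorff spaces.
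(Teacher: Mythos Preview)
Your proposal is correct and follows essentially the same route as the paper's proof: uniqueness of the separating edge gives injectivity, stability under perturbation gives openness, and compactness of $O$ forces the only noncompact directions of $\mathring{W}(O)(I)\times(0,\infty)$ to be precisely the points collapsed in $B(O)(I)$. Your formulation of the second part via the explicit identification $K/(K\setminus X)\cong X^+$ for $K$ compact Hausdorff is somewhat more careful than the paper's informal description of ``where the noncompactness lives,'' but it encodes exactly the same idea.
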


\begin{proof}
The composition maps are clearly embeddings since if a tree can be written via a specific partial composition then the factors are unique. This relies on the requirement that all internal vertices have more than one child, which is also why we require $O(1)=*$. The fact that the subset is open is also clear: the ways to perturb a tree in $\mathring{W}_{(0,\infty)}(O)(I \cup_a J)$ are by either perturbing the vertices or by perturbing the edge length. But the exact same is true in $\mathring{W}_{(0,\infty)}(O)(I ) \times \mathring{W}_{(0,\infty)}(O)(J )$

The second statement on the level of spaces is clear: the bar construction consists of weighted (including the root edge) trees  modulo the trees with some edge $\infty$ or the root edge $0$. In our case, since $O$ is compact, the noncompactness of $\mathring{W} (O)\times (0,\infty)$ is due to the lack of length $\infty$ edges and the lack of length $0$ root edge. If we one point compactify, we will see that that as a space we get exactly the bar construction. That the cooperad partial decomposition maps are equal follows simply from definitions.
\end{proof}

The bar construction exists in far more generality than the above. In particular, it exists for operads in $(\mathrm{Sp},\wedge)$ and results in a cooperad in $(\mathrm{Sp},\wedge)$. We refer to\cite[Section 4]{ching_2005} for the details of the construction, but we will only need that for an operad in unpointed spaces $O$, $\Sigma^\infty B(O)\simeq B(\Sigma^\infty_+ O)$. 

Recall that for a spectrum $X$, the Spanier--Whitehead dual $X^\vee$ is the spectrum of functions $F(X,S^0)$. It is straightforward to check that for a cooperad $P$ in spectra $P^\vee(I):=P(I)^\vee$ is naturally an operad. Unfortunately, the reverse is not true. The dual of an operad is not a cooperad, in general.

\begin{dfn}
For an operad $O$ in $(\mathrm{Sp},\wedge)$, the Koszul dual operad is \[K(O):= B(O)^\vee.\]
If $O$ is an operad in unpointed spaces, we will refer to $K(\Sigma^\infty_+ O)$ as $K(O)$.\end{dfn}

 Ching shows that this topological Koszul duality is a lift of classical Koszul duality, in the sense that there is a spectral sequence \[ K(H_*(O)) \Longrightarrow H_*(K(O)).\] This spectral sequence collapses if $H_*(O)$ is Koszul\cite[Proposition 9.39, 9.48]{ching_2005}. This is a commonly occuring property of algebraic operads see\cite[Chapter 7]{loday_vallette_2012}. In particular, it collapses for $E_n$ since $\mathrm{pois}_n$ is Koszul.

\section{Poincaré-Koszul operads}\label{sec:duality}

In this section, we define what it means for a topological operad to be Poincaré-Koszul, with respect to a fixed field $k$. All homology is taken with respect to $k$. In particular, it should imply an equivalence $H_*(O) \simeq s_n H_*(K(O))$. Informally, the pair of $O$ and its derived decomposables should be a Poincaré duality pair (see Section \ref{sec:poincare}). The main goal of this section is to show that the $E_n$ operad is Poincaré-Koszul. In the next section we prove analogous results for the modules $E_M$ associated to a framed manifold.

For the remainder of this section, fix a reduced operad $O$ in unpointed spaces such that for all $I$, $O(I)$ is homotopy equivalent to a finite complex. 

\begin{dfn}
The symmetric sequence $\partial (W_{[0,\infty]}(O))$ is given by
\[((\partial W(O)(I))\times [0,\infty]) \cup (W(O)(I) \times \{0\}) \cup (W(O) (I)\times \{\infty\})\]
\end{dfn}

\begin{dfn}
A distinguished $n$-class of $O$ is a homology class $\alpha_I$, for each nonempty finite set $I$, in the relative homology \[H_{n|I|-n}(W_{[0,\infty]}(O)(I),\partial(W_{[0,\infty]}(O))(I))\] such that, under the isomorphism with $\bar{H}_*(B(O))$, the partial decomposites act as
\[\alpha_{I \cup_a J} \rightarrow \alpha_I \otimes \alpha_J.\]

\end{dfn}

As a sanity check, note that \[(n|I|-n) +( n|J|-n)=n(|I|+|J|-1)-n=n|I \cup_a J|-n.\]

\begin{dfn}
An operad $O$ with a distinguished $n$-class $\alpha$ is Poincaré-Koszul of dimension $n$ if $\alpha_I$ makes \[(W_{[0,\infty]}(O)(I),\partial(W_{[0,\infty]}(O))(I))\] into a Poincaré duality pair for all $I$. We refer to $\alpha$ as the fundamental class of the operad.
\end{dfn}
There is a natural notion of suspension for operads in the category $(\mathrm{dgVect},\otimes)$. For a chain complex $A$, let $A[i]$ denotes the graded vector space where the gradings have been shifted up $i$.

\begin{dfn}
The operad $S_n$ in $(\mathrm{dgVect}_k,\otimes)$ is defined by \[S_n(I):=k[n|I|-n]\] with partial composites determined by the canonical isomorphism $k \otimes k \cong k$.
\end{dfn}

We define the $n$th suspension of an operad $O$ in $(\mathrm{dgVect},\otimes)$ by \[s_n O(I):= S_n(I) \otimes O(I).\] This naturally forms an operad. 
\newpage
By unfolding definitions, one immediately obtains:

\begin{thm}
If $O$ is a Poincaré-Koszul operad of dimension $n$, then there is an isomorphism of operads

\[H_*(O) \cong s_n H_*(K(O)).\]

induced by \[H^*(W_{[0,\infty]}(O)) \xrightarrow{\cap \alpha} s_{-n}\bar{H}_*(B(O)),\] where we consider $H^i(-)$ as living in degree $-i$. This isomorphism is natural with respect to maps of Poincaré-Koszul operads of dimension $n$ that preserve the fundamental $n$-class

\end{thm}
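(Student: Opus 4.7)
The plan is to unfold definitions and apply Poincaré--Lefschetz duality pair-wise, then check that operadic structure is preserved. First, I would establish the three basic identifications that let the statement make sense degree by degree. The projection $W_{[0,\infty]}(O)(I) \to W(O)(I) \simeq O(I)$ given by forgetting the root-edge weight is a homotopy equivalence, so $H^*(W_{[0,\infty]}(O)(I)) \cong H^*(O(I))$. On the other side, comparing the boundary decomposition of $\partial W_{[0,\infty]}(O)(I)$ with the definition of the bar construction (as in the proof of Proposition \ref{prp:bar}, together with the compactness of the quotient $W_{[0,\infty]}/\partial W_{[0,\infty]}$) yields a natural homeomorphism $W_{[0,\infty]}(O)(I)/\partial W_{[0,\infty]}(O)(I) \cong B(O)(I)$, so the cap product with $\alpha_I$ lands in $H_*(B(O)(I))$. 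Finally, Spanier--Whitehead duality gives $H_d(K(O)(I)) \cong H^{-d}(B(O)(I))$ in finite type, and chasing through the definition of $s_n$ shows that $s_n H_*(K(O))(I)$ in degree $d$ is exactly $H^{n|I|-n-d}(B(O)(I))$.

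Second, I would invoke the Poincaré duality pair hypothesis: the fundamental class $\alpha_I \in H_{n|I|-n}(W_{[0,\infty]}(O)(I), \partial W_{[0,\infty]}(O)(I))$ induces a cap product isomorphism
\[
\cap \alpha_I: H^i(W_{[0,\infty]}(O)(I)) \xrightarrow{\cong} H_{n|I|-n-i}(W_{[0,\infty]}(O)(I), \partial W_{[0,\infty]}(O)(I)).
\]
Combining this with the identifications above, at each $I$ I obtain an isomorphism $H_*(O(I)) \cong s_n H_*(K(O))(I)$ (taking duals and reindexing, which is where the convention that $H^i$ sits in degree $-i$ is used to make degrees agree on the nose).

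The main work, and the only non-formal point, is checking that these level-wise isomorphisms assemble into an isomorphism of operads, i.e.\ that they commute with partial composites. Concretely, the partial composite on $H_*(O)$ is dual to the pullback along $W_{[0,\infty]}(O)(I) \times W_{[0,\infty]}(O)(J) \to W_{[0,\infty]}(O)(I \cup_a J)$, while the partial composite on $s_n H_*(K(O))$ is dual to the partial decomposition on $B(O)$. The compatibility $\alpha_{I \cup_a J} \mapsto \alpha_I \otimes \alpha_J$ in the definition of a distinguished $n$-class is exactly the statement needed to combine with the cap product projection formula $\phi_*(\phi^*(u) \cap v) = u \cap \phi_*(v)$ to make the two squares commute. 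This is the step I expect to require the most care: one has to write down the relevant square, apply the projection formula on the pair $(W_{[0,\infty]}(O)(I \cup_a J), \partial)$, and use the cooperad structure on $B(O)$ as well as the definition of $K(O)$ to see that both paths agree after capping with the fundamental class.

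Finally, naturality with respect to maps of Poincaré--Koszul operads that preserve the fundamental $n$-class is automatic: such a map $f: O \to O'$ induces compatible maps $W_{[0,\infty]}(f)$ and $B(f)$, and $f_*(\alpha_I) = \alpha'_I$ is the precise hypothesis needed for the cap product squares to commute, again by the projection formula. This completes the proof modulo verifying the two cap product diagrams in the preceding paragraph.
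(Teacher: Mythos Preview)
Your proposal is correct and matches the paper's approach exactly: the paper offers no proof beyond the sentence ``By unfolding definitions, one immediately obtains,'' and what you have written is precisely that unfolding. Your identification of the three ingredients (the equivalence $W_{[0,\infty]}(O)\simeq O$, the quotient description of $B(O)$, and the Spanier--Whitehead duality computation for $K(O)$), the invocation of the cap product isomorphism from the Poincar\'e duality pair hypothesis, and the use of the compatibility $\alpha_{I\cup_a J}\mapsto \alpha_I\otimes\alpha_J$ together with the projection formula to verify operad compatibility are all the right moves; the paper simply leaves these implicit.
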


A less important, but still relevant result, is that if we consider the suboperad of decomposables $X:=\mathrm{decom}(W_{[0,\infty]}(O))$, the quotient of $X$ by the trees with an edge of length infinity or the root edge with length 0, naturally forms a cooperad, again by degrafting. Let's call this quotient $Y$.
\begin{prp}\label{prp:extra}
There is an isomorphism of operads $H_*(X)\cong s_n H_*(Y^\vee)$.
\end{prp}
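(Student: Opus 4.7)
The plan is to rerun the cap-product argument from the proof of the preceding theorem, now applied to the pair $(X(I),\partial X(I))$ with $Y(I)=X(I)/\partial X(I)$ playing the role of $B(O)(I)$. Specifically, the goal is to construct a fundamental class $\alpha^X_I \in H_{n|I|-n}(X(I),\partial X(I))$ whose cap product furnishes an isomorphism $H^*(X(I)) \xrightarrow{\cap\alpha^X_I} s_{-n}\tilde H_*(Y(I))$; Spanier--Whitehead duality on the finite complex $Y(I)$ then rewrites $\tilde H^*(Y(I)) \cong H_*(Y(I)^\vee)$ and delivers the claimed isomorphism after the degree shift $s_n$.

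The fundamental class is obtained by excision from $\alpha_I$. The key observation is that $X$ is the open subspace of $W_{[0,\infty]}(O)$ complementary to the closed corolla cell $C := O\times[0,\infty]$, and that $C$ meets $\partial W_{[0,\infty]}(O)$ only along $O\times\{0,\infty\}$. I interpret the phrase ``edge of length infinity'' in the definition of $\partial X$ to allow both internal and root edges, so that $\partial X = X\cap \partial W_{[0,\infty]}(O)$; the root-$0$ condition is stated separately only because length-$0$ internal edges are already collapsed in $W(O)$. Excision of the interior of $C$ then gives an isomorphism $H_*(X,\partial X) \cong H_*(W_{[0,\infty]}(O),\partial W_{[0,\infty]}(O))$, transporting $\alpha_I$ to the desired $\alpha^X_I$.

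Granting the fundamental class, the operadic compatibility is a formal consequence of the analogous compatibility for $\alpha_I$. The partial composition on $X$ is literally restricted from that on $W_{[0,\infty]}(O)$, while the cooperadic decomposition of $Y$ is the same degrafting as on $B(O)$, with the convention that if the decomposition lands on a corolla (outside $X$) the result collapses to basepoint; multiplicativity $\alpha^X_{I\cup_a J}\mapsto \alpha^X_I\otimes\alpha^X_J$ follows by naturality of excision from the corresponding statement for $\alpha_I$, and naturality of cap product upgrades the degree-wise isomorphism to an operad isomorphism. The main obstacle will be verifying that the excised class is genuinely a fundamental class for $(X,\partial X)$---equivalently, that the cap product remains an isomorphism---which requires care because $X$ is a CW space rather than a literal manifold; the verification should reduce to a local computation near the corolla cell, which is manageable thanks to its transparent product structure $O\times[0,\infty]$.
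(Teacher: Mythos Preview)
The excision step is where your argument breaks. Excision lets you remove $Z$ from a pair $(W,A)$ only when $\overline Z\subset\operatorname{int}(A)$; you propose to excise the corolla cell $C=O(I)\times[0,\infty]$ from $\bigl(W_{[0,\infty]}(O)(I),\partial W_{[0,\infty]}(O)(I)\bigr)$, but $C$ meets $\partial W_{[0,\infty]}(O)(I)$ only along $O(I)\times\{0,\infty\}$ and the bulk $O(I)\times(0,\infty)$ sits entirely outside the relative part, so the hypothesis fails. Worse, the isomorphism you are aiming for, $H_*(X,\partial X)\cong H_*\bigl(W_{[0,\infty]}(O),\partial W_{[0,\infty]}(O)\bigr)$, is false: the paper records just after the proposition that $Y\cong\Sigma\,\partial W(O)$ while the right-hand side computes $\tilde H_*(B(O))$, and the collapse $B(O)\to Y$ is only a degree-$1$ map (the analogy given is $N/\partial N\to\Sigma\,\partial N$ for a compact manifold $N$). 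So no transported class $\alpha^X_I$ exists in the degree you want. A smaller issue: $X$, being the image of the composition maps, is closed and contains the corollas lying in $\operatorname{decom}(O)$; it is the closure of the non-corolla locus rather than its open complement.

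The paper does not spell out a full proof but indicates the mechanism in the sentence following the statement: the result reflects that $\partial W(O)$ is itself a Poincar\'e duality space. The relevant cap product is Poincar\'e duality for that closed space, fed by $\partial\alpha_I$ rather than $\alpha_I$, together with the identification $Y\cong\Sigma\,\partial W(O)$---not Lefschetz duality for the ambient pair $(W_{[0,\infty]}(O),\partial W_{[0,\infty]}(O))$. With that replacement your naturality-of-cap-product argument for the operad structure should go through.
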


 This duality is a reflection of the fact that $\partial W(O)$ is a Poincaré duality space. The cooperad $X^+$ should be thought of as a type of ``dendroidal suspension" of the derived decomposables. This $Y$ receives a degree 1 collapse map from $B(O)$, similar to how, for a compact manifold $N$, $\Sigma \partial N$ receives a degree $1$ map from $N/\partial N$.

Recall that a manifold is tame if it is homeomorphic to the interior of a compact manifold with boundary. Tameness allows us to make the identifications $H_*(\Sigma^\infty_+ M^\vee) \cong H^*(M)$ and $H_*(\Sigma^\infty (M^+)^\vee) \cong \bar{H}^*(M^+)$ since it implies $M$ and $M^+$ are both homotopy finite.

\begin{dfn}
A manifold operad $O$ is an open operad for which all $O(I)$ are tame topological manifolds without boundary. We say a manifold operad is oriented (with respect to a field $k$) if we are supplied with local orientations of the $O(I)$ such that the partial composites are orientation preserving.

\end{dfn}

\begin{dfn}
The dimension of a manifold operad $O$ is $\operatorname{dim}(O(J))$, $|J|=2$.
\end{dfn}As mentioned earlier, Arone-Kankaanrinta supply an infinite family of nonisomorphic operad structures on the symmetric sequences $(\mathbb{R}^n)^I/\{\mathrm{translation}\}$ and $(S^n)^{\wedge I}/\{\mathrm{translation}\}$ that yield important examples of manifold operads\cite{arone_kankaanrinta_2013}. These operads have dimension $n$.

%


%

%

\begin{thm}
Let $O$ be an oriented manifold operad of dimension $n$. Then the homological Poincaré duality isomorphism \[H_*(O) \cong s_n H_*((O^+)^\vee)\] is an isomorphism of operads.
\end{thm}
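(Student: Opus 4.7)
The plan is to derive the statement arity by arity from the naturality of Poincaré duality under open embeddings, with the operad axiom providing exactly the orientation compatibility needed. First I would record the dimensions: since $O$ is reduced and each partial composition $\iota:O(I)\times O(J)\hookrightarrow O(I\cup_a J)$ is an open embedding between manifolds (which preserves dimension), iterating the identity $\dim O(I)+\dim O(J)=\dim O(I\cup_a J)$ from the base case $\dim O(J)=n$ for $|J|=2$ yields $\dim O(I)=n(|I|-1)$. Cap product with the fundamental class $[O(I)]\in \tilde H_{n(|I|-1)}(O^+(I))$ determined by the chosen orientation then gives, for each $I$, a Poincaré duality isomorphism
\[
H_k(O(I)) \;\cong\; H^{n(|I|-1)-k}(O^+(I)) \;\cong\; H_{k-n(|I|-1)}\!\left((O^+(I))^{\vee}\right),
\]
which is precisely the degree shift encoded by $s_n$ at arity $I$, yielding the claimed isomorphism of underlying symmetric sequences.

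Next I would verify that this isomorphism intertwines the operad structures. The operadic composition on $H_*(O)$ is $\iota_*$ composed with the Künneth map. On the other side, by the earlier proposition the cooperadic decomposition on $O^+$ is the Pontryagin--Thom collapse $c:O^+(I\cup_a J)\to O^+(I)\wedge O^+(J)$ induced by $\iota$, so the operad structure on $(O^+)^{\vee}$ is the homology map induced by the dual of $c$. Commutativity of the corresponding square thus reduces to two standard naturality facts for Poincaré duality with $k$-coefficients: (a) for a product $U\times V$ of oriented $d$-manifolds with the product orientation, the cap product with $[U\times V]$ decomposes via Künneth as the external product of cap products with $[U]$ and $[V]$; and (b) for an oriented open embedding $\iota:U\hookrightarrow V$ of $d$-manifolds without boundary, $\iota_*(-\cap [U])=(\iota_!-)\cap [V]$, where $\iota_!=c^*$ is extension by zero on compactly supported cohomology. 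The hypothesis that $O$ is an oriented manifold operad says precisely that $\iota$ pulls the orientation on $O(I\cup_a J)$ back to the product orientation on $O(I)\times O(J)$, which translates into the fundamental-class compatibility $\iota^![O(I\cup_a J)]=[O(I)]\times[O(J)]$ needed to apply (a) and (b). Equivariance is immediate from the naturality of Poincaré duality under bijections of finite sets.

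I do not foresee a substantive obstacle, since (a) and (b) are classical consequences of the naturality of fundamental classes and of the cap product on Borel--Moore homology. The only thing to watch is that the $n(|I|-1)$ shift built into $s_n$ at arity $I$ matches the manifold dimension $\dim O(I)=n(|I|-1)$; this is guaranteed by the definition of the dimension of a manifold operad together with the open-embedding axiom, and is precisely why the correct suspension in the statement is $s_n$ and not some other shift.
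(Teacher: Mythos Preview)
Your argument is correct and is in fact the route the paper explicitly mentions first: the paper opens its proof with ``one could appeal to the naturality of the (compactly supported) cap product,'' which is precisely your plan via facts (a) and (b), and your identification of the orientation hypothesis with the fundamental-class compatibility $\iota^![O(I\cup_a J)]=[O(I)]\times[O(J)]$ is exactly what makes that naturality apply. The paper then chooses ``for variety'' to give a different proof, valid only when the underlying spaces are smooth: it rephrases the question as the equality of two pairings $H_*(O(I))\otimes H_*(O(J))\otimes H_*(O^+(I\cup_a J))\to S_n(I\cup_a J)$ and computes both geometrically, using that the Poincar\'e duality pairing counts oriented intersections of generic representatives and that a codimension-zero embedding makes ``push forward $x\times y$ then intersect with $z$'' equal to ``pull back $z$ then intersect with $x\times y$.'' Your approach is more general (no smoothness needed) and makes the role of the orientation axiom transparent; the paper's intersection-theoretic argument trades generality for a vivid geometric picture of why the codimension-zero condition is exactly what is required.
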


\begin{proof}
As before, one could appeal to the naturality of the (compactly supported) cap product. For variety, we supply an argument which suffices for manifold operads where the underlying spaces are homeomorphic to smooth manifolds.
\newpage
It suffices to show the following composite, defined via the geometric Poincaré duality pairing\cite{goresky_1981},

\begin{centering}
$H_*(O(I)) \otimes H_*(O(J))\otimes \bar{H}_*(O^+(I \cup_a J))$ \\
$\downarrow$\\
$H_*(O(I \cup_a J)) \otimes \bar{H}_*(O^+(I \cup_a J)) $\\
$\downarrow $\\
$S_n(I \cup_a J)$\\
\end{centering}
is equal to the composite

\begin{center}

$H_*(O(I)) \otimes H_*(O(J))\otimes \bar{H}_*(O^+(I \cup_a J)) $\\
$\downarrow$

 $H_*(O(I)) \otimes H_*(O(J)) \otimes \bar{H}_*(O^+(I)) \otimes \bar{H}_*(O^+(J))$\\
 $\downarrow$ \\
 $S_n(I) \otimes S_n(J)$\\
 $\downarrow$
\\$S_n(I \cup_a J)$

\end{center}

Either composite can only be nontrivial if $|x|+|y|$ is complementary to $|z|$. In this case, the first composite can be expressed by taking a generic representative $(x,y,z) \in H_*(O(I)) \otimes H_*(O(J)) \otimes \bar{H}_*(O^+ (I \cup_a J))$ and pushing forward $(x \times y)$ via the operad composition maps and counting oriented intersections with $z$. The bottom composite can be described as pulling $z$ back and counting intersections with $x \times y$. Since the partial composition maps are codimension 0 embeddings, these are evidently the same.

\end{proof}

\begin{thm}[Poincaré-Koszul duality of the $E_n$ operad]
The operad $E_n$ is Poincaré-Koszul of dimension $n$. In particular, there is a canonical isomorphism of operads \[H_*(E_n) \cong s_n H_*( K(E_n)).\]
\end{thm}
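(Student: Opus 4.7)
The plan is to realize $E_n$ through the Fulton-MacPherson compactification $\mathcal{F}_n$, which naturally carries a manifold operad structure of the right dimension, and then apply the preceding Poincaré duality theorem for oriented manifold operads to $\mathring{W}_{(0,\infty)}(E_n)$. The output will be precisely $H_*(E_n) \cong s_n H_*(K(E_n))$ once we have identified $\mathring{W}_{(0,\infty)}(E_n)^+$ with $B(E_n)$ via Proposition \ref{prp:bar} and recognized that $K(E_n) = B(E_n)^\vee$.

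First I would invoke the observation of Salvatore that the Fulton-MacPherson operad $\mathcal{F}_n$, viewed as a manifold-with-corners operad, is naturally isomorphic to its Boardman-Vogt cofibrant replacement. The combinatorial upshot is an operad isomorphism identifying $W_{[0,\infty]}(\mathcal{F}_n)(I)$ with the full Fulton-MacPherson space (including the root-edge length as a scaling coordinate): strata with trees carrying an $\infty$-length edge match the blowup strata, and the length-$0$/length-$\infty$ collapsed faces match the codimension-one boundary. Since $E_n$ and $\mathcal{F}_n$ are equivalent as operads, and since the $W$-construction is invariant under such equivalences up to homotopy, I may safely take $O=\mathcal{F}_n$ as my strict model. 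Under this identification, $W_{[0,\infty]}(E_n)(I)$ is an oriented manifold with corners of dimension $n|I|-n$ (the configuration dimension in $\mathbb{R}^n$ minus the translation directions, plus one for scaling), whose full corner-boundary is exactly $\partial W_{[0,\infty]}(E_n)(I)$. This produces a fundamental class
\[\alpha_I\in H_{n|I|-n}\bigl(W_{[0,\infty]}(E_n)(I),\,\partial W_{[0,\infty]}(E_n)(I)\bigr)\]
and makes the pair a Poincaré duality pair.

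Next I would check that the family $\{\alpha_I\}$ is a distinguished $n$-class in the sense of the earlier definition. The cooperadic partial decomposition on $B(E_n)$ is realized geometrically by the inclusion of the codimension-one boundary stratum of trees with a distinguished $\infty$-length internal edge, and this stratum is precisely the product $W_{[0,\infty]}(E_n)(I)\times W_{[0,\infty]}(E_n)(J)$ (up to the combinatorics of the grafting). The restriction of $\alpha_{I\cup_a J}$ to this stratum is the product of fundamental classes $\alpha_I\otimes \alpha_J$, since taking the fundamental class is compatible with products of manifolds-with-corners and with the chosen orientations; this is exactly the cooperad compatibility required. Meanwhile, the interior $\mathring{W}_{(0,\infty)}(E_n)$ is an oriented open manifold operad of dimension $n|I|-n$ whose one-point compactification is $B(E_n)$ by Proposition \ref{prp:bar}, so the theorem on oriented manifold operads gives an operad isomorphism
\[H_*\bigl(\mathring{W}_{(0,\infty)}(E_n)\bigr)\cong s_n H_*\bigl(B(E_n)^\vee\bigr)=s_n H_*(K(E_n)),\]
and the equivalence $\mathring{W}_{(0,\infty)}(E_n)\simeq E_n$ (both factors in $\mathring{W}\times(0,\infty)$ being homotopy equivalences of operads) rewrites the left-hand side as $H_*(E_n)$.

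The main obstacle is the geometric bookkeeping needed to justify the identification $W_{[0,\infty]}(\mathcal{F}_n)\cong \mathcal{F}_n$ (with scaling coordinate) as operads of manifolds-with-corners and, in particular, to verify that the codimension-one stratification is matched exactly by the grafting structure that drives the cooperadic decomposition. Once this Salvatore-type identification is in hand, both the existence of $\alpha_I$ and its compatibility with the bar cooperad structure become formal consequences of how orientations restrict to boundary strata of product form, and the duality theorem supplies the operad isomorphism uniformly.
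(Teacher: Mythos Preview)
Your proposal is correct and follows essentially the same route as the paper: use Salvatore's isomorphism $W(\mathcal{F}_n)\cong \mathcal{F}_n$ to exhibit $\mathring{W}_{(0,\infty)}(\mathcal{F}_n)$ as an oriented manifold operad of dimension $n$, invoke Proposition~\ref{prp:bar} to identify its one-point compactification with $B(\mathcal{F}_n)$, and then apply the Poincar\'e duality theorem for manifold operads to obtain the isomorphism, which one checks agrees with the Poincar\'e--Koszul duality map via the usual comparison of compactly supported and relative cap products. The only minor quibble is terminological: in the paper the ``dimension'' of a manifold operad is $\dim O(\{*,*\})=n$, not $n|I|-n$, and Salvatore's identification is $W(\mathcal{F}_n)\cong\mathcal{F}_n$ rather than $W_{[0,\infty]}(\mathcal{F}_n)\cong\mathcal{F}_n$---the latter carries the extra $[0,\infty]$ factor you correctly describe as the root-edge/scaling coordinate.
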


\begin{proof}
Salvatore shows that the compact model of $E_n$ called $\mathcal{F}_n$, the Fulton-MacPherson operad, has the property that $W(\mathcal{F}_n)\cong\mathcal{F}_n$ as operads\cite{salvatore_2021}. 
This implies $\mathring{W}_{(0,\infty)}(\mathcal{F}_n)$ is a manifold operad since it is automatically an open operad, and the $I$th space is homeomorphic to $(\mathcal{F}_n-\partial \mathcal{F}_n) \times (0,\infty)$ which is a tame manifold. Orientability follows from the fact the boundary embeddings $\mathcal{F}_n(I) \times \mathcal{F}_n(J) \rightarrow \mathcal{F}_n(I \cup_a J)$ respect the canonical local orientations of the $\mathcal{F}_n(I)$, in the sense that $[\mathcal{F}_n(I)]\otimes [\mathcal{F}_n(J)]\rightarrow \partial[\mathcal{F}_n(I \cup_a J)]$. Hence, by Proposition \ref{prp:bar} and Poincaré duality for manifold operads, there is an isomorphism \[ H_*(\mathring{W}_{(0,\infty)}(\mathcal{F}_n)) \cong s_n H_*(K(\mathcal{F}_n)).\]

But of course, $H_*(\mathring{W}_{(0,\infty)}(\mathcal{F}_n)) \cong H_*(W_{[0,\infty]}(\mathcal{F}_n))$ and the homology groups $\bar{H}_*(B(\mathcal{F}_n))$ can also be computed as \[H_*((W_{[0,\infty]}(\mathcal{F}_n),\partial ((W_{[0,\infty]}(\mathcal{F}_n))).\]

Since Poincaré duality for manifolds is expressible either through compactly supported cap products or relative cap products, we deduce that, under these identifications, this isomorphism is actually a Poincaré-Koszul duality isomorphism. 
\end{proof}


\section{Module-Comodule duality}
The story of Koszul duality for right modules is largely parallel to operads, though there are a few differences when it comes to one point compactification. From now on, we refer to right (co)modules over a (co)operad by the term ``(co)module''. 

\begin{dfn}
A proper module $R$ over a compact operad $O$ in $(\mathrm{Top},\times)$ is a module such that all partial composites are proper maps.
\end{dfn}

\begin{prp}
If $R$ is a proper module over the compact operad $O$, $R^+$ is a module in $(\mathrm{Top}_*,\wedge)$ over $O_+$.
\end{prp}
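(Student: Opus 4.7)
The plan is to exploit the functoriality of one-point compactification on the category of locally compact Hausdorff spaces with proper maps, combined with the standard identification of the compactification of a product as a smash product.

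First I would recall that if $f\colon X \to Y$ is a proper continuous map of (reasonable) locally compact spaces, then there is an induced pointed map $f^+\colon X^+ \to Y^+$ sending the point at infinity to the point at infinity, and that this assignment is functorial. Next, since $O$ is assumed compact, every $O(J)$ is compact Hausdorff, so for any $R(I)$ in our convenient category there is a natural homeomorphism $(R(I) \times O(J))^+ \cong R(I)^+ \wedge O(J)^+ = R^+(I) \wedge O_+(J)$, where $O_+(J) = O(J)_+ = O(J)^+$ because $O(J)$ is already compact. Applying $(-)^+$ to the proper composition map $R(I) \times O(J) \to R(I \cup_a J)$ and composing with this identification produces the desired structure map
\[
R^+(I) \wedge O_+(J) \longrightarrow R^+(I \cup_a J).
\]

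Then I would verify the module axioms. Equivariance under bijections is immediate because the bijection action is by homeomorphisms (in particular proper), and one-point compactification preserves composition of proper maps. Associativity reduces to the commutativity of a diagram of proper maps between products of $R(I)$'s and $O(J)$'s; applying the functor $(-)^+$ and the compatibility of the smash identification with iterated products yields the corresponding pointed diagram. Since the original diagram commutes by the hypothesis that $R$ is a module over $O$, so does its image.

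The only subtle point, and the place where I would be careful, is ensuring that the identification $(X \times Y)^+ \cong X^+ \wedge Y^+$ is genuinely functorial in proper maps in both variables within the chosen convenient category; this is why the hypothesis that $O$ is compact (so that the smash identification does not require additional local compactness on $R(I)$) and that the structure maps are proper (so that the compactifications assemble continuously) are both used. Once this bookkeeping is in place, the proof is essentially a diagram chase powered by the functoriality of $(-)^+$.
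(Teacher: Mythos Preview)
The paper states this proposition without proof; your argument is correct and is exactly the expected one. It is the covariant analog of the one-line proof the paper does give for the earlier proposition that an open operad $O$ yields a cooperad $O^+$ (there via contravariant functoriality of $(-)^+$ on open embeddings, here via covariant functoriality on proper maps together with the identification $(X\times Y)^+\cong X^+\wedge Y_+$ for $Y$ compact).
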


\begin{dfn}
An open module $R$ over an open operad $O$ is a module in $(\mathrm{Top},\times) $ such that all partial composites are open embeddings.
\end{dfn}

\begin{prp}
If $R$ is an open module over the open operad $O$, $R^+$ is a comodule over $O^+$.
\end{prp}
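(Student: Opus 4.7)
My plan is to mirror verbatim the proof of the preceding proposition, which established that $O^+$ is a cooperad when $O$ is an open operad. The entire argument is an exercise in contravariant functoriality of one-point compactification with respect to open embeddings, carried over from the operadic to the modular setting.

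First, I would construct the candidate partial decomposites. Since $R$ is an open module, each partial composite
\[R(I) \times O(J) \to R(I \cup_a J)\]
is by hypothesis an open embedding of locally compact Hausdorff spaces. One-point compactification is contravariant with respect to such maps: an open embedding $U \hookrightarrow X$ induces the collapse $X^+ \to U^+$ that sends everything outside $U$ to the basepoint. Applying this gives natural maps
\[R(I \cup_a J)^+ \to (R(I) \times O(J))^+ \cong R(I)^+ \wedge O(J)^+,\]
which I take as the structure maps of $R^+$.

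Second, I would verify the equivariance and coassociativity axioms. The module axioms for $R$ are encoded by commutative diagrams whose arrows are iterated partial composites of $R$ and $O$, and each such iterated composite is an open embedding (being a product and composition of open embeddings of locally compact Hausdorff spaces, which remains an open embedding in this category). Applying $(-)^+$ reverses all the arrows and, by contravariant functoriality, turns each such commutative diagram into precisely the corresponding commutative diagram required for a comodule. Equivariance is inherited in the same way from the equivariance of the original open embeddings.

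The only real subtlety, and hence the main (though mild) obstacle, is to ensure that contravariant functoriality actually applies at every stage. Concretely, I need to check that the composed maps appearing in the module associativity diagram, such as $R(I)\times O(J) \times O(K) \to R(I\cup_a J)\times O(K) \to R((I\cup_a J)\cup_b K)$, are themselves open embeddings; this follows from the open module and open operad hypotheses together with stability of open embeddings under products and composition. Once this point is dispatched, the axioms transfer for free and no new geometric input beyond that of the preceding proposition is required.
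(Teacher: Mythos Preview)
Your proposal is correct and matches the paper's approach exactly: the paper in fact omits the proof of this proposition entirely, treating it as the evident analog of the preceding one-line argument for operads, which is precisely what you have written out in detail.
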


Note that the one point compactification of a proper module is a module and the one point compactification of an open module is a comodule. We recall the definitions of the $W$-construction and bar construction of a right module, due to Ching \cite{ching_2005}.

\begin{dfn}.
Given a module $R$ in $(\mathrm{Top}_*,\wedge)$ over the operad $O$, let $T(R)$ denote the space
of rooted trees such that every internal vertex has at least 2 children with labels as follows: the root is labeled by $R$,  the internal vertices are labeled by $O$, while nonleaf adjacent edges are
labeled by an element of $[0, \infty]$; we identify any vertex labeled by a basepoint to a single point. We let $W(R)$ denotes the quotient of $T(R)$ by the relation that any length 0 edge can be collapsed
by applying module or operad partial composition.
$W(R)$ is a module over $W(O)$ by grafting via length $\infty$ edges and a module over $W_{[0,\infty]}(O)$ by grafting via the $[0,\infty]$ coordinate.
\end{dfn}

If $R$ is a module in $(\mathrm{Top},\times)$, let $W(R)$ be the module given by $W(R_+)(I)-\{*\}$. It is well known that partial composition induces an equivalence of modules $W(R) \rightarrow R$ compatible with the equivalence $W(O) \rightarrow O$.

\begin{dfn}
If $R$ is a module over $O$ in $(\mathrm{Top}_*,\wedge)$, let $B(R)$ denote $W(R)$  modulo the relations that any tree with an $\infty$ length edge is identified with
$*$.
This is a comodule via decomposing trees, if possible, and otherwise sending to the basepoint.

\end{dfn}

As a symmetric sequence, the bar construction of a operad $O$ in $(\mathrm{Top}_*,\wedge)$ is homeomorphic to the bar construction of a module: $B(1)$. Here $1$ is the unique nontrivial module over $O$ which consists of $S^0$ in cardinality $1$, and $*$ otherwise. If $R$ is unpointed, we let $B(R)$ denote $B(R_+)$. Just as in the operad case, for an unpointed module $R$, $W(R)$ lives in $(\mathrm{Top},\times)$ and $B(R)$ in $(\mathrm{Top}_*,\wedge)$.

For the rest of this section, we fix a module $R$ over $O$ in $(\mathrm{Top},\times)$.

\begin{dfn}
Let $\partial W(R)$ represent the subspace of $W(R)$ with some edge of length $\infty$. We let $\mathring{W}(R):=W(R)-\partial W(R)$. It is a module over $\mathring{W}_{(0,\infty)}(O)$ by grafting via the $(0,\infty)$ coordinate.
\end{dfn}

It is easy, but not necessary, to check that $\partial W(R)$ is a model of the derived decomposables of $R$.

\begin{prp}\label{prp:barM}
The module $\mathring{W}(R)$ is an open module over $\mathring{W}_{(0,\infty)}(O)$. If $O$ is compact and $R$ is proper, $\mathring{W}(R)^+\cong B(R^+)$.
\end{prp}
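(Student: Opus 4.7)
The plan is to mirror the proof of Proposition \ref{prp:bar} in the module setting, treating the two assertions in order. First I would establish that the partial composition
\[ \mathring{W}(R)(I) \times \mathring{W}_{(0,\infty)}(O)(J) \to \mathring{W}(R)(I \cup_a J), \]
which grafts the operad tree at the $a$-labeled leaf of the module tree with grafting-edge length read off from the $(0,\infty)$ coordinate of the operad factor, is an open embedding. Injectivity runs exactly as in the operad case: from a grafted tree the operad subtree is recovered as the smallest subtree spanning the leaves labeled by $J$, a reconstruction which is unambiguous because every internal vertex has at least two children and $O(1)$ is a point. The grafting edge is then the edge from the root of this subtree to its parent, whose length recovers the $(0,\infty)$ coordinate of the operad factor. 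Openness follows from the same perturbation argument used in Proposition \ref{prp:bar}: sufficiently small perturbations of a grafted composite are still grafted, with the two factors deforming continuously.

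For the second assertion I would compare $\mathring{W}(R)(I)^+$ and $B(R^+)(I)$ stratum-by-stratum over the finitely many admissible tree shapes $\tau$ with $|I|$ leaves (only finitely many because internal vertices are required to have at least two children). The stratum of $W(R^+)(I)$ corresponding to $\tau$ can be written as the smash product
\[ R(K_{\mathrm{root}})^+ \wedge \bigwedge_v O(K_v)_+ \wedge ([0,\infty]^{E_\tau})_+, \]
with $K_{\mathrm{root}}, K_v$ the incoming-edge sets at the root and each internal vertex and $E_\tau$ the set of non-leaf edges. Each factor is a compact pointed space: $O(K_v)_+$ by compactness of $O$, $R(K_{\mathrm{root}})^+$ as the one-point compactification of a locally compact Hausdorff space (which $R(K_{\mathrm{root}})$ is by the properness hypothesis in the convenient category of spaces being used), and the edge coordinates trivially. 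Hence $W(R^+)(I)$ is a quotient of a finite wedge of compact pointed spaces and is itself compact, as is its further quotient $B(R^+)(I)$.

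I would then identify the complement $B(R^+)(I) \setminus \{\ast\}$ with $\mathring{W}(R)(I)$: a point of $B(R^+)(I)$ is non-basepoint precisely when every edge has finite length and no vertex label is at a basepoint, which unpacks to finite edge lengths together with root label in $R$ and internal labels in $O$, i.e.\ an element of $\mathring{W}(R)(I)$. The tautological inclusion $\mathring{W}(R)(I) \hookrightarrow B(R^+)(I)$ is therefore an open embedding onto this complement, with stratum topologies matching. Extending by sending the added point at infinity to the basepoint yields a continuous bijection $\mathring{W}(R)(I)^+ \to B(R^+)(I)$ from a compact space to a Hausdorff space, hence a homeomorphism. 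Naturality in $I$ and the observation that one-point compactification turns grafting into degrafting then upgrade this to an isomorphism of comodules over $B(O) \cong \mathring{W}_{(0,\infty)}(O)^+$.

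The main obstacle is the bookkeeping around the basepoint of $B(R^+)$: it must simultaneously absorb two qualitatively different ways a sequence in $\mathring{W}(R)$ can escape to infinity, namely an edge length tending to $\infty$ or the root label escaping compact subsets of $R$. Compactness of $O$ disposes of the operadic labels for free, while the role of properness of $R$ is twofold: it is what guarantees that $R^+$ is a module in pointed spaces so that $B(R^+)$ is even defined, and it provides the local compactness of $R$ needed to conclude that $R^+$ is compact Hausdorff and hence that $B(R^+)$ is compact --- the hypothesis that powers the compact-to-Hausdorff bijection argument at the end.
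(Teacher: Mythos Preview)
Your approach is correct and mirrors the paper's own (much terser) proof, which simply points to the sources of noncompactness and refers back to the operad case. One caveat: the properness hypothesis on $R$ means the partial composition maps are proper (this is what makes $R^+$ an $O_+$-module so that $B(R^+)$ is defined), not that the spaces $R(I)$ are locally compact Hausdorff; the latter is a separate implicit assumption---automatic in all the applications---needed for your compact-to-Hausdorff step, and the paper is equally informal about it.
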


\begin{proof}
As before, this is an exercise in understanding why $\mathring{W}(R)$  is noncompact. Since $O$ is compact, it is not coming from the internal vertices, but only from the lack of length $\infty$ edges and the noncompactness of the labels of the root vertex. Thus, one point compactification results in $B(R^+)$.
\end{proof}

As in the case of operads, the bar construction exists for modules in $(\mathrm{Sp},\wedge)$. See\cite[Section 7]{ching_2005} for details. We only need that for a module $R$ in $(\mathrm{Top},\times)$, $\Sigma^\infty B(R) \simeq B(\Sigma^\infty_+ R).$

\begin{dfn}
For a module $R$ in $(\mathrm{Sp},\wedge)$ the Koszul dual module is \[K(R):=B(R)^\vee.\] If $R$ is a module in unpointed spaces, we will refer to $K(\Sigma^\infty_+ R)$ as $K(R)$.
\end{dfn}
\noindent Again there is a spectral sequence \cite[Proposition 9.39]{ching_2005}
\[K(H_*(R)) \Longrightarrow H_*(K(R)).\]

\section{Poincaré-Koszul modules}

In this section, we define what it means for a module over a Poincaré-Koszul operad to be Poincaré-Koszul with respect to a field $k$. In particular, it should imply an equivalence $H_*(R) \cong s_{(n,d)} H_*(K(R))$ where $s_{(n,d)}$ is a version of suspension for modules. Informally, the pair of $R$ and its derived decomposables should be a Poincaré duality pair (see Section \ref{sec:poincare}).

For the remainder of this section, fix an operad $O$ and a right module $R$ in unpointed spaces such that for all $I$, $O(I),R(I)$ are homotopy equivalent to finite complexes. All homology is taken with respect to a fixed field $k$.

\begin{dfn}
A distinguished $(n,d)$-class of a module $R$ over an operad $O$ with a distinguished $n$-class $\alpha_I$ is a choice of elements $\beta_I$, for each nonempty finite set $I$, in \[H_{n|I|-n+d}(W(R)(I),\partial W(R)(I)),\] such that, under the isomorphism with $\bar{H}_*(B(R))$, the partial decomposites act as
\[\beta_{I \cup_a J} \rightarrow \beta_I \otimes \alpha_J.\]
\end{dfn}
As a sanity check, note \[(n|I|-n+d) + (n|J|-n)=n(|I|+|J|-1)-n+d=n|I \cup_a J|-n+d.\]

\begin{dfn}
A module $R$ with a distinguished class $\beta$ over a Poincaré-Koszul operad $O$ is Poincaré-Koszul of dimension $(n,d)$ if $\beta_I$ makes \[(W(R)(I),\partial W(R)(I))\] into a Poincaré duality pair for all $I$. We refer to $\beta$ as the fundamental class of the module.
\end{dfn}

\begin{dfn}
The module $S_{(n,d)}$ over $S_n$ is defined by \[S_{(n,d)}(I):=k[n|I|-n+d]\] with partial composites determined by the canonical isomorphism $k \otimes k \cong k$.
\end{dfn}

\noindent We define the $(n,d)$-suspension of a module $R$ over the operad $O$ in $(\mathrm{dgVect}_k,\otimes)$ by \[s_{(n,d)} R(I):= S_{(n,d)}(I) \otimes R(I).\] This naturally forms a module over $s_n O$. As such, a suspension of a module is a ``linear combination'' of two natural notions of suspension: one which is internal to $O$ modules and one which transforms $O$ modules to $s_n O$ modules.

By unraveling definitions one immediately obtains:

\begin{thm}
If $R$ is a Poincaré-Koszul module of dimension $(n,d)$ there is an isomorphism

\[H_*(R) \cong s_{(n,d)} H_*(K(R)).\]
induced by $H^*(R) \xrightarrow{\cap \beta} s_{(-n,-d)}\bar{H}_*(B(R))$, where we use the convention that $H^i(-)$ lives in degree $-i$. This is compatible with the Poincaré-Koszul isomorphism of $O$. This isomorphism is natural with respect to maps of Poincaré-Koszul modules of dimension $n$ that preserve the fundamental $(n,d)$-class

\end{thm}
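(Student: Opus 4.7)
The plan is to derive the isomorphism directly from the cap product with the fundamental class $\beta_I$ on the pair $(W(R)(I), \partial W(R)(I))$, and then check compatibility with the module structure via naturality of the cap product, in complete parallel with the operadic case.

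First I would apply the Poincaré duality isomorphism $H^{*}(W(R)(I)) \xrightarrow{\cap \beta_I} H_{n|I|-n+d-*}(W(R)(I),\partial W(R)(I))$ for each $I$, which is the defining property of a Poincaré-Koszul module. The equivalence $W(R) \to R$ of modules identifies the domain with $H^*(R(I))$, and the collapse map $W(R)(I) \to W(R)(I)/\partial W(R)(I) = B(R)(I)$ identifies the codomain with $H_*(B(R)(I))$ in the appropriate shifted degree. Since each $R(I)$ is homotopy equivalent to a finite complex, dualizing over $k$ yields an isomorphism of symmetric sequences $H_*(R(I)) \cong s_{(n,d)} H_*(B(R)(I))^\vee = s_{(n,d)} H_*(K(R)(I))$, and these assemble across $I$ by naturality.

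Second, I would verify that this is an isomorphism of modules over $s_n H_*(K(O))$, where the source inherits its module structure from $H_*(O)$ under the Poincaré-Koszul isomorphism of the operad established in the preceding section. By the defining property of a distinguished $(n,d)$-class, the image of $\beta_{I \cup_a J}$ in $H_*(B(R)(I \cup_a J)) \cong H_*(B(R)(I)) \otimes H_*(B(O)(J))$ is $\beta_I \otimes \alpha_J$. Combined with the naturality of the cap product with respect to the partial composition maps of pairs $(W(R), \partial W(R)) \times (W_{[0,\infty]}(O), \partial W_{[0,\infty]}(O)) \to (W(R), \partial W(R))$, this translates into the commutativity of the square that expresses compatibility of the Poincaré duality isomorphism with the module partial composites on either side.

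The main obstacle is bookkeeping: verifying that the degree shifts and signs align under the chain of identifications between $H^*(W(R)(I))$, $H_*(R(I))^\vee$, and $s_{(n,d)}H_*(K(R)(I))$, and that they interact correctly under the partial composition maps given the chosen suspension conventions for $S_n$ and $S_{(n,d)}$. Once these are set up, the compatibility statement reduces to naturality of the cap product on pairs together with the relation between $\beta_{I \cup_a J}$, $\beta_I$, and $\alpha_J$ encoded in the definition of a distinguished class, which was precisely tailored to make this calculation work. Naturality under maps of Poincaré-Koszul modules preserving the fundamental class is then immediate from naturality of the cap product.
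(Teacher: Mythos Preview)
Your proposal is correct and takes essentially the same approach as the paper, which simply says ``By unraveling definitions one immediately obtains'' before the statement. You have spelled out exactly what that unraveling amounts to: apply the Poincar\'e duality cap product on the pair $(W(R)(I),\partial W(R)(I))$, identify the two sides with $H^*(R)$ and $H_*(B(R))$ respectively, and use the relation $\beta_{I\cup_a J}\mapsto \beta_I\otimes\alpha_J$ together with naturality of the cap product to obtain compatibility with the module structure---precisely the module analog of the operadic argument given earlier.
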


An analogous version of Proposition \ref{prp:extra} holds for Poincaré-Koszul modules.

\begin{dfn}
A manifold module $R$ over a manifold operad $O$ is an open module where all the spaces are tame topological manifolds without boundary. We say $R$ is oriented (with respect to a field $k$) if $O$ is oriented, and we are supplied with local orientations of the $R(I)$ that make the partial composites orientation preserving.
\end{dfn}

\begin{dfn}
The dimension of a manifold module $R$ over the manifold operad $O$ is $(\operatorname{dim}(O),\operatorname{dim}(R(\{*\}))$.
\end{dfn}

For example, any manifold operad $O$ is a dimension $(\operatorname{dim}(O),0)$ module over itself since our operads are reduced. Given any manifold module $R$ of dimension $(n,d))$, the symmetric sequence $R \times \mathbb{R}^k$ is naturally a manifold module of dimension $(n,d+k)$.

As one expects, there is a corresponding Poincaré duality theorem for manifold modules:

\begin{thm}
Let $R$ be an oriented manifold module of dimension $(n,d)$ over an oriented manifold operad $O$. Then the homological Poincaré duality isomorphism, \[H_*(R)\cong s_{(n,d)}H_*((R^+)^\vee)\] is an isomorphism of modules compatible with the isomorphism \[H_*(O)  \cong s_nH_*((O^+)^\vee)\]
\end{thm}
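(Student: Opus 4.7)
The plan is to reproduce, in the module setting, the geometric argument already used for the operad version. The key input is that a manifold module is by definition an open module, so all partial composites $R(I)\times O(J)\to R(I\cup_a J)$ are codimension $0$ open embeddings of oriented manifolds, and by hypothesis they preserve the local orientations. This makes Goresky's geometric Poincaré duality pairing available. The Poincaré duality isomorphism of $R(I)$ with $s_{(n,d)}H_*((R^+(I))^\vee)$ can therefore be described via transverse intersection numbers, exactly as in the operad proof.

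I would proceed as follows. First, record the two maps to compare. Given classes $x\in H_*(R(I))$, $y\in H_*(O(J))$, and $z\in H_*(R^+(I\cup_a J))$, the ``module, then duality'' composite
\[
H_*(R(I))\otimes H_*(O(J))\otimes H_*(R^+(I\cup_a J))\to H_*(R(I\cup_a J))\otimes H_*(R^+(I\cup_a J))\to S_{(n,d)}(I\cup_a J)
\]
pushes $(x,y)$ forward along the open embedding and counts oriented intersections with $z$. The ``duality, then comodule'' composite
\[
H_*(R(I))\otimes H_*(O(J))\otimes H_*(R^+(I\cup_a J))\to H_*(R(I))\otimes H_*(O(J))\otimes H_*(R^+(I))\otimes H_*(O^+(J))\to S_{(n,d)}(I)\otimes S_n(J)\to S_{(n,d)}(I\cup_a J)
\]
first decomposes $z$ via the comodule structure on $(R^+)^\vee$ dual to the partial composite of $R$, then counts intersections of $x$ with the $R(I)$-component of $z$ and of $y$ with the $O(J)$-component.

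Second, verify the two agree. Both composites can only be nonzero on tridegree where $|x|+|y|$ is complementary to $|z|$ in $R(I\cup_a J)$. Taking a generic smooth representative $z$ and its image of $(x,y)$ under the partial composite embedding, transversality and the orientation hypothesis ensure that an intersection point of $(x\times y)$ with $z$ in $R(I\cup_a J)$ lies in the image of the open embedding, and under the inverse identification $R(I)\times O(J)\cong \mathrm{image}$ it corresponds bijectively (with compatible signs, since the embedding is orientation preserving) to a pair consisting of an intersection of $x$ with the $R(I)$-factor of the decomposition of $z$ and an intersection of $y$ with the $O(J)$-factor. This identification is exactly the decomposition of $z$ dual to the module structure, so the two intersection counts agree.

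Third, check compatibility with the operad-level Poincaré-Koszul isomorphism. This is automatic once the two composites are equated: the operad composition $O(J)\otimes O(K)\to O(J\cup_b K)$ is subsumed in the same diagram (applied to $O$ regarded as a module over itself), and the diagrammatic identity of the two composites restricts on the $O$-part to the one proven in the operad theorem. The main obstacle is not any deep technicality but rather the careful orientation and sign bookkeeping needed to ensure that the intersection numbers assemble into the correct element of $S_{(n,d)}(I\cup_a J) = S_{(n,d)}(I)\otimes S_n(J)$ under the canonical isomorphism $\mathbb{Z}\otimes\mathbb{Z}\cong\mathbb{Z}$; this is exactly where the hypothesis that partial composites are orientation preserving is used, and it is handled in the same way as in the operad proof.
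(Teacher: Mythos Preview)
Your proposal is correct and follows exactly the approach the paper intends: the paper does not spell out a separate proof for the module statement, leaving it as the evident adaptation of the geometric intersection argument given for manifold operads. Your write-up carries out precisely that adaptation, comparing the ``compose then pair'' and ``decompose then pair'' composites via transverse intersection counts across a codimension~$0$ orientation-preserving embedding, which is the same idea as in the operad proof.
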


\begin{thm}[Poincaré-Koszul duality for $E_M$]
If $M$ is a compact, framed $n$-manifold without boundary, $E_M$ is Poincaré-Koszul of dimension $(n,n)$. If $M$ is tame but not necessarily compact, we still have an isomorphism of modules
\[H_*(E_M)\cong s_{(n,n)}H_*(K( E_{M^+}))\]
compatible with the isomorphism of operads
\[H_*(E_n)\cong s_{n}H_*(K(E_n)).\]

\end{thm}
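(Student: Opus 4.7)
The strategy is to imitate the proof of Poincaré-Koszul duality for $E_n$, with the Fulton-MacPherson module $\mathcal{F}_M$ playing the role that $\mathcal{F}_n$ played for the operad. Assume first that $M$ is compact and framed; then $\mathcal{F}_M$ is a proper module over the compact operad $\mathcal{F}_n$. The first step is to extend Salvatore's observation to the module level: the framing of $M$ supplies a canonical identification between the tree edge-length coordinates of $W(\mathcal{F}_M)$ and the radial collision coordinates of $\mathcal{F}_M$, yielding an isomorphism of modules $W(\mathcal{F}_M)\cong \mathcal{F}_M$ lying over the operadic isomorphism $W(\mathcal{F}_n)\cong \mathcal{F}_n$. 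Consequently $\mathring{W}(\mathcal{F}_M)(I)$ is homeomorphic to $F(M,I)$, a manifold without boundary of dimension $n|I|=n|I|-n+n$, and $\mathring{W}(\mathcal{F}_M)$ becomes an open manifold module of dimension $(n,n)$ over $\mathring{W}_{(0,\infty)}(\mathcal{F}_n)$. Orientability is supplied by combining the framing of $M$ with the operadic orientation of $\mathcal{F}_n$, and the module composition maps preserve the resulting orientation by the same local computation used for $\mathcal{F}_n$.

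With the oriented manifold-module structure in place, the Poincaré duality theorem for manifold modules at dimension $(n,n)$ furnishes an isomorphism of modules
\[ H_*(\mathring{W}(\mathcal{F}_M)) \;\cong\; s_{(n,n)} H_*\bigl((\mathring{W}(\mathcal{F}_M)^+)^\vee\bigr), \]
compatible with the analogous isomorphism for $\mathcal{F}_n$. Proposition~\ref{prp:barM}, applied to the proper module $\mathcal{F}_M$ over the compact operad $\mathcal{F}_n$, identifies the one-point compactification $\mathring{W}(\mathcal{F}_M)^+$ with $B(\mathcal{F}_M^+)$, so the right-hand side is $s_{(n,n)} H_*(K(E_M))$, while the left-hand side is $H_*(E_M)$. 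To promote this to the full Poincaré-Koszul structure, the fundamental $(n,n)$-class is produced by pairing the operadic fundamental $n$-class of $\mathcal{F}_n$ with the framing-induced fundamental class of $M$, so compatibility with the operadic fundamental class is automatic from the cap-product description of the duality.

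For $M$ not necessarily compact, $\mathcal{F}_M$ remains an oriented manifold module of dimension $(n,n)$ but is no longer proper over $\mathcal{F}_n$, so its one-point compactification no longer represents $B(E_M)$. Instead, one should identify the one-point compactification of $\mathring{W}(\mathcal{F}_M)$ with a model of $B(E_{M^+})$, using the Ayala-Francis observation that $E_{M^+}(I)$ is homotopy equivalent to singular configurations in $M^+$, which is exactly the natural compactification of $F(M,I)$. Poincaré duality for the non-compact manifolds $F(M,I)$ then delivers the asserted module isomorphism $H_*(E_M)\cong s_{(n,n)}H_*(K(E_{M^+}))$, compatible with the operadic isomorphism $H_*(E_n)\cong s_n H_*(K(E_n))$ already proved.

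The hardest step I anticipate is the module-level extension of Salvatore's observation, that is, producing the homeomorphism $W(\mathcal{F}_M)\cong \mathcal{F}_M$ from the framing of $M$: one must carefully match the grafting operation with the radial blowup structure of $\mathcal{F}_M$ at all corner strata, not just the boundary hypersurfaces of the collision locus. Once this identification is in place, the remainder of the proof follows the operadic argument almost verbatim, with manifolds and dimensions shifted by the dimension of $M$.
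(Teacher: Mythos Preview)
Your approach is essentially the paper's: extend Salvatore's isomorphism to obtain $W(\mathcal{F}_M)\cong\mathcal{F}_M$, deduce that $\mathring{W}(\mathcal{F}_M)$ is an oriented manifold module of dimension $(n,n)$ over $\mathring{W}_{(0,\infty)}(\mathcal{F}_n)$, and then combine manifold-module Poincar\'e duality with Proposition~\ref{prp:barM}. The compact case is handled exactly as you describe.

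Your non-compact case contains one misstatement that complicates things unnecessarily. You assert that $\mathcal{F}_M$ is ``no longer proper over $\mathcal{F}_n$'' when $M$ is not compact; in fact $\mathcal{F}_M$ is \emph{always} a proper module, regardless of compactness of $M$, because the partial composites are closed boundary-stratum inclusions of manifolds with corners. What changes when $M$ is non-compact is only that $\mathcal{F}_M(I)$ itself is non-compact, so its one-point compactification $(\mathcal{F}_M)^+$ genuinely differs from $(\mathcal{F}_M)_+$; this pointed module is the paper's $\mathcal{F}_{M^+}$, a model for $E_{M^+}$. Since properness holds, Proposition~\ref{prp:barM} applies uniformly and gives $\mathring{W}(\mathcal{F}_M)^+\cong B(\mathcal{F}_{M^+})$ directly, yielding
\[
H_*(\mathcal{F}_M)\;\cong\;s_{(n,n)}H_*(K(\mathcal{F}_{M^+}))
\]
without the detour through the Ayala--Francis description of singular configurations. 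That detour, as you have sketched it, is in any case incomplete: knowing the homotopy type of $E_{M^+}(I)$ does not by itself identify $\mathring{W}(\mathcal{F}_M)^+$ with the bar construction, which requires the comodule structure---and that is precisely what properness via Proposition~\ref{prp:barM} provides. The paper runs the argument once for all $M$, noting only at the end that compactness is what allows one to identify the resulting isomorphism with the Poincar\'e--Koszul structure maps.
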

\begin{proof}
It is well known that the Fulton-MacPherson compactifications $\mathcal{F}_{M}(I)$ of the configuration spaces of a framed manifold form a proper module over $\mathcal{F}_n$ \cite[Proposition 6.4]{markl_1999}, and that this module is equivalent to $E_{M}$, compatible with the zigzag of equivalences \cite{salvatore_1998} \[\mathcal{F}_n \xleftarrow{\simeq} W(E_n)\xrightarrow{\simeq} E_n.\] 
This name is somewhat of a misnomer in the case $M$ is not compact because the resulting space remains noncompact. Heuristically, the Fulton-Macpherson construction removes noncompactness that arises from considering configurations, but not from the noncompactness of $M$. Hence, we one point compactify $\mathcal{F}_M$ to obtain $\mathcal{F}_{M^+}:=(\mathcal{F}_M)^+$, which is a model of $E_{M^+}$ via the above zigzag of operad equivalences.

If one examines Salvatore's construction of the isomorphism $\mathcal{F}_n \cong W(\mathcal{F}_n)$ \cite{salvatore_2021}, with minor alterations it shows $\mathcal{F}_M \cong W(\mathcal{F}_M)$. Salvatore's construction takes as input collar neighborhoods $c_i$ of $\mathcal{F}_n([i])$ and inductively constructs the homeomorphism $\mathcal{F}_n([i+1])\cong W (\mathcal{F}_n)([i+1])$ by appealing to the fact that every point of $\partial \mathcal{F}_n([i+1])$ is uniquely associated to a tree with $i+1$ leaves with nonleaf vertices labeled by elements of $\mathcal{F}_n$. Salvatore uses this description to associate to a point in the collar of $\partial \mathcal{F}_n([i+1])$ a ``lower tree'' and an ``upper tree'' which allows him to inductively construct a map from $  \mathcal{F}_n([i+1]) \rightarrow W(\mathcal{F}_n)([i+1])$ which he shows is a homeomorphism.

A similar decomposition of points in $\partial\mathcal{F}_M([i+1])$ exists where we now label the root by an element of $\mathcal{F}_M$ and all other nonleaf edges are still labeled by $\mathcal{F}_n$, see \cite[Section 6]{markl_1999} and \cite[Theorem 4.4]{Sinha_2004}. Using the same degrafting mechanism, one degrafts the tree into a ``lower tree'' and an ``upper tree'' and the construction proceeds as before except the lower tree analysis now uses the inductive homeomorphisms $\mathcal{F}_M([j]) \cong W(\mathcal{F}_M)([j])$ for $j < i+1$ and the upper tree analysis uses the homeomorphisms $\mathcal{F}_n([j]) \cong W(\mathcal{F}_n)([j])$ for $j<i+1$. The proof that the inductively constructed map is a homeomorphism works equally well in this case, as it only uses the unique tree description of the boundary.

This homeomorphism implies, by a dimension check, $\mathring{W}(\mathcal{F}_M)$ is a $(n,n)$-manifold module over $\mathring{W}_{(0,\infty)}(\mathcal{F}_n)$ where tameness of each space follows from the tameness of $M$. Hence, by Proposition \ref{prp:barM} and Poincaré duality for modules, there is an isomorphism \[H_*(\mathcal{F}_M)\cong s_{(n,n)}H_*(K(\mathcal{F}_{M^+}))\]
 
 If $M$ is compact, this isomorphism is easily identified with the Poincaré-Koszul duality isomorphism for modules, just as it was for operads.
\end{proof}




We end the first half of this paper by using Poincaré-Koszul duality to explore the structure of $H_*(F(M,-))$. Recall that the operad $\mathrm{com}$ in $(\mathrm{Top}_*,\wedge)$ is $S^0$ in every degree with all partial composites homeomorphisms.

\begin{dfn}
For a pointed space $X$, the $\mathrm{com}$ module $X^{\wedge}$ in $(\mathrm{Top}_*,\wedge)$  is \[X^\wedge(I):=X^{\wedge I}\]
The partial composites are induced by the diagonal of $X$.
\end{dfn}

\begin{prp}
For a tame framed manifold $M$ there is an isomorphism of shifted Lie modules \[\mathrm{res}_{s_n \mathrm{lie}}(H_*(\mathcal{F}_M)) \cong s_{(n,n)}H_*(K((M^+)^{\wedge}))\]
\end{prp}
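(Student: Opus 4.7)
My plan is to derive the proposition from the Poincaré-Koszul isomorphism for $E_M$ established in the preceding theorem, combined with the naturality of Ching's Koszul duality applied to a canonical comparison of modules. The starting point will be the tautological inclusion of right modules $\mathcal{F}_{M^+} \hookrightarrow (M^+)^{\wedge}$, which identifies the $I$th space of $\mathcal{F}_{M^+}$ with the subspace of $(M^+)^{\wedge I}$ consisting of tuples whose non-basepoint coordinates are pairwise distinct. This inclusion is equivariant along the operad map $\mathcal{F}_n \to \mathrm{com}$, where $(M^+)^{\wedge}$ is regarded as an $\mathcal{F}_n$-module by pullback.

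Applying the contravariant Koszul dual functor $K$ to this inclusion produces a morphism $K((M^+)^{\wedge}) \to K(\mathcal{F}_{M^+})$ covering $K(\mathrm{com}) \to K(\mathcal{F}_n)$. Taking homology and invoking the Getzler-Jones identifications $H_*(K(\mathrm{com})) \cong \mathrm{lie}$ and $H_*(K(\mathcal{F}_n)) \cong s_{-n}\mathrm{pois}_n$, with the induced map realizing the standard embedding $\mathrm{lie} \hookrightarrow s_{-n}\mathrm{pois}_n$, we obtain a morphism of $\mathrm{lie}$-modules
\[ H_*(K((M^+)^{\wedge})) \longrightarrow \mathrm{res}_{\mathrm{lie}} H_*(K(\mathcal{F}_{M^+})). \]
Combining with the Poincaré-Koszul isomorphism $H_*(\mathcal{F}_M) \cong s_{(n,n)} H_*(K(\mathcal{F}_{M^+}))$ of $\mathrm{pois}_n$-modules and suspending, the proposition reduces to showing that the displayed map is an isomorphism of $\mathrm{lie}$-modules.

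The main obstacle is precisely this final identification. My plan is to verify it at the level of bar constructions, by comparing the tree model of $B(\mathcal{F}_{M^+})$ with the much simpler $B((M^+)^{\wedge})$. Under the augmentation $\mathcal{F}_n \to \mathrm{com}$, the internal-vertex labels in trees representing elements of $B(\mathcal{F}_{M^+})$ lose their positional information; what remains is the root decoration in $(M^+)^{\wedge}$ carried by a bare combinatorial tree shape, which is precisely what defines $B((M^+)^{\wedge})$. I would argue, either by a direct cell-by-cell comparison or by invoking Ching's spectral sequence from the earlier section (which collapses on both sides since $\mathrm{com}$ and $\mathrm{pois}_n$ are Koszul), that the resulting quotient induces an isomorphism on $\mathrm{lie}$-module homology. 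Dualizing and applying $s_{(n,n)}$ then completes the proof.
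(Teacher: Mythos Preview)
Your overall strategy matches the paper's: build a comparison map of right modules $\mathcal{F}_{M^+}\to (M^+)^{\wedge}$ covering $(\mathcal{F}_n)_+\to\mathrm{com}$, apply $K$, and combine with the Poincar\'e--Koszul isomorphism for $E_M$. Two points, however, deserve correction.

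First, the map you describe is not an inclusion. In this paper $\mathcal{F}_{M^+}:=(\mathcal{F}_M)^+$ is the one-point compactification of the Fulton--MacPherson compactification, which carries boundary strata recording infinitesimal collisions; it is not literally the subspace of $(M^+)^{\wedge I}$ of pairwise-distinct tuples (that is only the homotopy type of $E_{M^+}$). The actual module map $\mathcal{F}_{M^+}\to (M^+)^{\wedge}$ \emph{collapses} those infinitesimal strata. This matters because your ``tautological inclusion'' does not exist as a map of $\mathcal{F}_n$-modules, whereas the collapse map does and is precisely what makes the bar comparison work.

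Second, and more substantively, your justification that $B(\mathcal{F}_{M^+})\to B((M^+)^{\wedge})$ is an equivalence is where the real content lies, and neither of your proposed routes is quite adequate. The Ching spectral sequence compares $K(H_*(R))$ to $H_*(K(R))$ for a \emph{single} module; to use it here you would need to know that the algebraic map $K(H_*((M^+)^{\wedge}))\to K(H_*(\mathcal{F}_{M^+}))$ (Koszul duals taken over $\mathrm{com}$ and $\mathrm{pois}_n$ respectively) is an isomorphism, which is no easier than the original claim since $H_*(\mathcal{F}_{M^+})\ncong H_*((M^+)^{\wedge})$ as symmetric sequences. The paper instead observes directly that the map of bar constructions is a weak equivalence \emph{at the space level}: both sides are identified, levelwise, with the quotient $(\mathcal{F}_M(I))^+/(\partial\mathcal{F}_M(I))^+\cong (M^+)^{\wedge I}/(\Delta^{\mathrm{fat}})^+$. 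Once you have that, dualizing gives $K((M^+)^{\wedge})\simeq K(\mathcal{F}_{M^+})$ as $K(\mathrm{com})$-modules, and the rest of your argument goes through.
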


\begin{proof}
There is a map $\mathcal{F}_{M^+ }\rightarrow (M^+)^\wedge$ given by collapsing infinitesimal configurations. This is compatible with the map of operads $(\mathcal{F}_n)_+ \rightarrow \mathrm{com}$, so induces a map $B(\mathcal{F}_{M^+})\rightarrow B((M^+)^\wedge)$ between bar constructions over the operads $\mathcal{F}_n$ and $\mathrm{com}$, respectively. 
This is known to be an equivalence as a consequence of \cite[Proposition 2.5]{FTW} and comes down to observing \[(\mathcal{F}_M(I))^+/(\partial \mathcal{F}_M(I))^+ \cong (M^+)^{\wedge I}/(\Delta^{\mathrm{fat}}((M^+)^{\wedge I})),\] where $\Delta^{\mathrm{fat}}(-)$ denotes the subspace where some points coincide. Taking duals and applying (not necessarily compact) Poincaré-Koszul duality and the Koszul self duality of the morphisms $s_n\mathrm{lie}\rightarrow \mathrm{pois}_n \rightarrow \mathrm{com}$ (see Section \ref{sec:algduality}) yields the result since the map $K((M^+)^\wedge)\rightarrow K(\mathcal{F}_{M^+})$ is a map of $K(\mathrm{com})$ modules.
\end{proof}

\begin{cor}
For a tame framed manifold $M$, the $s_n \mathrm{lie}$ module $\mathrm{res}_{s_n \mathrm{lie}}(H_*(\mathcal{F}_M))$ is a homotopy invariant of $M^+$.
\end{cor}

The effect of restriction from the n-Poisson operad to the shifted Lie operad is to restrict action of forests of trees on $H_*(F(M,-))$ to forests with a single tree. Hence, if the full $\mathrm{pois}_n$ module structure is to distinguish homotopy equivalent, but not framed diffeomorphic manifolds, it must be based on how the action treats disjoint unions of trees.

\section{The layers of the embedding tower}

In the previous sections, we studied actions of operads on modules which \textit{increased} cardinality. This increase is a consequence of defining operads via functors out of the category of \textit{nonempty} finite sets. If we include the empty set, we must supply maps 
\begin{align*} 
O(I)\otimes O(\emptyset)\rightarrow O(I-\{a\})\\
R(I)\otimes O(\emptyset)\rightarrow R(I-\{a\})
\end{align*}
When we require this type of map, we call the operad unitary. By a reduced unitary operad, we mean one that the underlying symmetric sequence $S$ has $S(\{*\})=S(\emptyset)=1$, the monoidal unit. In this case, the above partial composite reduces to maps $O(I)\rightarrow O(I -\{a\})$ and $R(I)\rightarrow R(I -\{a\})$.

For example, $E_n$ can be made into a reduced unitary operad through the action of forgetting disks, and similarly for $E_M$. Under the homotopy equivalence $E_M \simeq F(M,-)$, these maps correspond to the projections. The techniques of the first half of this paper are not suitable to study this extended action. For instance, the W-construction does not have an obvious way to incorporate these types of projection maps. Nevertheless, the study of the nonunitary modules $E_M$ is essentially equivalent to the study of the unitary versions by a result of Lurie\cite[Proposition 5.5.2.13]{lurieHA}.

We fix two compact smooth manifolds $M,N$, possibly of different dimensions. We refer to the interiors by $\mathring{M},\mathring{N}$. It is a theorem of Weiss (which we will make precise) that these projections determine the difference between the approximations $P_i(F)$ and $P_{i-1}(F)$ for a presheaf $F$ on $\mathring{M}$. Our approach to understanding these projections is via explicit models of their Spanier--Whitehead duals.

Suppose we have a ``good'' presheaf $F:\mathrm{Open}(\mathring{M})^{\mathrm{op}} \rightarrow \mathrm{Top}_*$, i.e. it takes isotopy equivalences to weak equivalences and sends increasing unions to homotopy inverse limits. The theory of manifold calculus, introduced by Weiss\cite{weiss_1999}, associates to $F$ a tower of functors approximating $F$ which are built from the values of $F$ on the sets diffeomorphic to disjoint unions of $\mathbb{R}^n$, where $n=\mathrm{dim}(M)$.

\[\begin{tikzcd}
	&& {P_\infty(F)} \\
	&& \dots \\
	&& {P_2(F)} \\
	&& {P_1(F)} \\
	F && {P_0(F)}
	\arrow[from=5-1, to=5-3]
	\arrow[from=5-1, to=4-3]
	\arrow[from=5-1, to=3-3]
	\arrow[from=5-1, to=1-3]
	\arrow[from=1-3, to=2-3]
	\arrow[from=3-3, to=4-3]
	\arrow[from=2-3, to=3-3]
	\arrow[from=4-3, to=5-3]
\end{tikzcd}\]
In\cite{brito_weiss_2013}, Boavida de Brito and Weiss show that in the case $F$ is ``context free'', this tower agrees with the embedding calculus tower associated to to a presheaf on the topological category $\mathrm{Disk}_n$, or equivalently, to the corresponding right modules over the unitary version of $E^{\mathrm{fr}}_n$.

Recall that for a set $I$, an $I$-cube is a functor $S:2^I \rightarrow C$, from the powerset of $I$, ordered by inclusion, to a category $C$.  For a contravariant $I$-cube $S:(2^I )^{\mathrm{op}} \rightarrow \mathrm{Top}_*$, $\mathrm{totfiber}$ denotes the total homotopy fiber of the cube, i.e.  \[\mathrm{totfiber}(S):=\mathrm{hofiber}(S(I) \rightarrow  \mathrm{holim}(S|_{2^I-I})).\]

Weiss proves the following result concerning the layers of the embedding tower\cite[Example 4.1.8]{goodwillie_klein_weiss_2001}:

\begin{thm}[Weiss]
For a good presheaf $F$ on $\mathring{M}$, the $i$th layer of the tower for $F$, $D_i(F):= \mathrm{hofiber}(P_i(F) \rightarrow P_{i-1}(F))$, is equivalent to the space of sections of the bundle over $F(\mathring{M},i)/\Sigma_i$ with fiber over a set $\{x_i\}$ given by 
$\mathrm{totfiber}_{J \subset I}( F(\sqcup_{j \in J} B_{\epsilon}(x_j)))$,
where we require the section to agree with the distinguished section near the fat diagonal.
\end{thm}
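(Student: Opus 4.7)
The plan is to follow Weiss's original strategy, which computes $P_i(F)$ as a homotopy right Kan extension and then identifies the layer through a cofinality/cubical reduction. Recall $P_i(F)(\mathring{M})$ is the homotopy limit of $F$ over the poset $\mathcal{O}_i(\mathring{M})$ of open subsets diffeomorphic to at most $i$ disjoint copies of $\mathbb{R}^n$, and the comparison map $P_i(F) \to P_{i-1}(F)$ is restriction along the inclusion $\mathcal{O}_{i-1} \subset \mathcal{O}_i$. The first step is therefore to express the fiber of this restriction as a homotopy limit over the ``difference'' subposet consisting of open subsets that genuinely involve $i$ components.

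The second step is a localization argument. Around a fixed unordered configuration $\{x_1, \ldots, x_i\} \in F(\mathring{M},i)/\Sigma_i$ one chooses disjoint balls $B_\epsilon(x_j)$; the restriction of $F$ to the poset of sub-unions defines a contravariant $I$-cube $J \mapsto F(\sqcup_{j \in J} B_\epsilon(x_j))$. Using the standard identity expressing a total homotopy fiber of a cube as the difference between its top vertex and the homotopy limit over proper subsets, one identifies the local contribution to $D_i(F)$ at this configuration with $\mathrm{totfiber}_{J \subset I}(F(\sqcup_{j \in J} B_\epsilon(x_j)))$, because proper subsets are handled by $P_{i-1}$.

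The third step is to globalize: letting the configuration vary, the local cubes assemble into a $\Sigma_i$-equivariant fiber bundle over the ordered configuration space $F(\mathring{M},i)$, which descends to a bundle over $F(\mathring{M},i)/\Sigma_i$. Isotopy invariance of $F$ (being a good presheaf) is what guarantees local triviality. The layer $D_i(F)$ is then the space of sections of this bundle. The behavior near the fat diagonal is built in: as two $x_j$ collide, the $i$-cube degenerates to a cube on fewer inputs whose total fiber is contractible, and the section must take the canonical value there. This forces the boundary condition, because any other section would create data not seen by $P_{i-1}$ in a neighborhood where only fewer distinct disks can be inscribed.

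The main obstacle is making the cofinality/localization step precise, namely constructing the comparison between the global homotopy limit over $\mathcal{O}_i(\mathring{M}) \setminus \mathcal{O}_{i-1}(\mathring{M})$ and the space of sections, and then checking it is a weak equivalence. Here one must verify that varying the radius $\epsilon$ and the basepoints yields a homotopy final functor into $\mathcal{O}_i(\mathring{M})$, and that the degeneration as points collide precisely matches the combinatorics of cube collapse, so that the resulting ``distinguished section near the fat diagonal'' condition is the correct matching condition to glue the local descriptions into a global object equivalent to $\mathrm{hofiber}(P_i(F) \to P_{i-1}(F))$.
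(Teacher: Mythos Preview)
The paper does not give a proof of this statement: it is quoted as a theorem of Weiss with a citation to \cite[Example 4.1.8]{goodwillie_klein_weiss_2001}, and the text immediately moves on to explaining notation and specializing to $F=\mathrm{Emb}(-,\mathring{N})$. So there is nothing in the paper to compare your argument against.

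That said, your outline is a fair summary of the standard strategy from Weiss's original paper: identify $P_i$ as a homotopy limit over $\mathcal{O}_i$, isolate the contribution of opens with exactly $i$ components, express that contribution locally as a total homotopy fiber of the $i$-cube of balls, and then reassemble these local contributions into a section space with the degeneration condition at the fat diagonal. Where you stop is exactly where the real work lies: the cofinality argument comparing the poset $\mathcal{O}_i\setminus\mathcal{O}_{i-1}$ (or rather the relevant relative homotopy limit) with the parametrized family of tubular neighborhoods, and the careful matching of the boundary behavior. As written, your third and fourth steps are assertions rather than arguments; in particular, the claim that the collapse of the cube as points collide ``forces the boundary condition'' needs the actual construction of the comparison map to the section space, not just a plausibility argument. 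If you intend this as more than an attribution, you would need to either reproduce Weiss's argument in detail or appeal to one of the later treatments (e.g.\ the $\infty$-categorical reformulation in Boavida de Brito--Weiss) where the section-space description is derived from the model for $P_i$ directly.
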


Here $B_\epsilon(-)$ denotes a sufficiently small ball, and the distinguished section is the section constant at the basepoint of the fibers. In the case $F=\mathrm{Emb}(-,\mathring{N})$ \footnote{One considers $\mathrm{Emb}(-,\mathring{N})$ as a functor taking values in pointed spaces by fixing a distinguished embedding $\mathring{M}\rightarrow \mathring{N}$.}, the cube we take the homotopy limit of is equivalent to the contravariant $[i]$-cube of framed configurations and projections:
\[J \subset [i]\]
\[J \rightarrow F^{\mathrm{fr}}(\mathring{N},J)\]

A notable fact is that when $i>1$ this can be replaced with the cube of unframed configurations\cite[Theorem 9.2]{weiss_1999}. If $i=1$, the approximation coincides with $\mathrm{FImm}(\mathring{M},\mathring{N})$, the space of formal immersions.

For stable embedding calculus, it is convenient to instead study presheaves on $\mathrm{Open}_{\mathrm{tame}}(\mathring{M})$, those open subsets which are diffeomorphic to the interior of a compact manifold with boundary. When we are in this context, we call a presheaf good if it simply sends isotopy equivalences to weak equivalences. This relaxation is warranted because in the tame setting infinite increasing unions are eventually homotopically constant, i.e. eventually all the inclusions are homotopy equivalences. The calculus of these presheaves is identical to the good presheaves on $\mathrm{Open}(M)$\cite[Section 4.1]{goodwillie_klein_weiss_2001}.

\begin{dfn}

A functor $F:\mathrm{Open}_{\mathrm{tame}}(\mathring{M})^{\mathrm{op}}\rightarrow \mathrm{Top}_*$ is an $N$-stable embedding functor if it is of the form $G(\Sigma^\infty_+ \mathrm{Emb}(-,\mathring{N}))$ for some $G: \mathrm{Sp}\rightarrow \mathrm{Top}_*$ which preserves weak equivalences.
\end{dfn}

Any $N$-stable embedding functor is automatically good because $\mathrm{Emb}(-,N)$ is a good presheaf on $\mathrm{Open}_{\mathrm{tame}}(M)$. The goal of the second half of this paper is to prove:

\begin{thm}
The layers of an $N$-stable embedding functor are relative tangential homotopy invariants of $(N,\partial N)$,  i.e. invariants of homotopy equivalences of pairs $f:(N,\partial N) \rightarrow (N',\partial N')$ such that $f^*(TN')\cong TN$.
\end{thm}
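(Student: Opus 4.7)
The plan is to combine Weiss's description of the layers with a stable-level invariance result for cubes of framed configuration spaces, applied to the tangent frame bundle. First I would reduce the problem to invariance of the fiber input. By the preceding theorem attributed to Weiss, the $i$th layer $D_i F$ is a space of sections, relative to the fat diagonal, of the bundle over $\mathrm{Emb}(\mathring{N},i)/\Sigma_i$ whose fiber over a configuration $\{x_1,\dots,x_i\}$ is the total homotopy fiber of the $[i]$-cube $J\mapsto F(\sqcup_{j\in J}B_\epsilon(x_j))$. Since $F$ is of the form $G\circ \Sigma^\infty_+\mathrm{Emb}(-,\mathring{N})$ and $G$ preserves weak equivalences, and since homotopy limits commute with preservers of equivalences applied termwise, the fiber is equivalent to $G$ applied to the total fiber of the cube of spectra
\[
J\mapsto \Sigma^\infty_+ \mathrm{Emb}(\sqcup_{j\in J} B_\epsilon(x_j),\mathring{N}).
\]
Up to homotopy, this is the cube $J\mapsto \Sigma^\infty_+ F^{\mathrm{fr}}(\mathring{N},J)$ of framed configurations of $\mathring{N}$, with structure maps given by the projections forgetting indices in $I\setminus J$. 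Thus it suffices to exhibit the entire bundle of cubes over $F(\mathring{N},i)/\Sigma_i$, together with its distinguished section near the fat diagonal, as a relative tangential homotopy invariant of $(\bar{N},\partial\bar{N})$.

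Next I would invoke the two-parameter invariance theorem for framed stable configuration cubes promised in the introduction. Namely, for a bundle $\xi\to N$ with compact (possibly boundary) fibers, one has spaces $F^\xi(N,k):=\xi^k|_{F(N,k)}$, and the author's thickening/cylinder constructions (Definitions \ref{def:thick} and \ref{def:cyl}) produce, via duality and a mapping-cylinder extension, an equivalence of the cube $J\mapsto \Sigma^\infty_+ F^\xi(N,J)$ with its counterpart for $\xi'\to N'$ whenever there is a relative fiberwise homotopy equivalence between $\xi$ and $\xi'$. Applying this with $\xi$ the orthonormal frame bundle $\mathrm{Fr}(TN)$ gives tangential invariance of $J\mapsto \Sigma^\infty_+ F^{\mathrm{fr}}(\mathring{N},J)$ as a cube of spectra over $F(\mathring{N},i)$; the relative version keeps track of the boundary $\partial\bar{N}$ so that the equivalence respects the distinguished section near the fat diagonal.

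Then I would combine the two inputs. A tangential homotopy equivalence of pairs $f\colon(\bar{N},\partial\bar{N})\to(\bar{N}',\partial\bar{N}')$ with $f^*T N'\cong TN$ induces, on each stratum $F(\mathring{N},i)/\Sigma_i$, a homotopy equivalence on base spaces (since configuration spaces on interiors of homotopy equivalent compact manifolds with boundary share an equivalence at the $\Sigma^\infty_+$ level, but we only need the base equivalence), and by the cube-level invariance an equivariant equivalence of the bundles of cubes covering it. Applying $G$ fiberwise produces an equivariant equivalence of the bundles whose sections are $D_i F$ and $D_i F'$, compatibly with the distinguished section at the fat diagonal. Since section spaces are invariant under fiberwise equivalences of bundles over equivalent bases that preserve the distinguished subsection, one obtains the desired equivalence $D_i F\simeq D_i F'$.

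The main obstacle is the middle step: proving the tangential invariance of the \emph{cube} $J\mapsto \Sigma^\infty_+ F^\xi(N,J)$, not just of its individual terms. Invariance at each $J$ separately follows from the non-equivariant results of Aouina–Klein and the equivariant results of Knudsen and of the author, but compatibility with the projection maps $F^\xi(N,J)\to F^\xi(N,J')$ for $J'\subset J$, and with the distinguished section near the fat diagonal, is what requires the careful duality/mapping-cylinder bookkeeping (Definitions \ref{def:thick} and \ref{def:cyl}). Once this cube-level invariance is secured, threading it through Weiss's description of the layers and through the functor $G$ is formal.
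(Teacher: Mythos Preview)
Your overall strategy matches the paper's: reduce via Weiss's layer description to the tangential invariance of the cube $J\mapsto \Sigma^\infty_+ F^{\mathrm{fr}}(\mathring{N},J)$, apply the cube-level invariance theorem with $\xi$ the orthonormal frame bundle, and then parametrize over the base to conclude invariance of the classifying fibration. Two points, however, are genuinely wrong as written.

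First, the base of Weiss's section-space description is $F(\mathring{M},i)/\Sigma_i$, the unordered configurations in the \emph{source} manifold $M$, not in $N$. When $N$ is replaced by a tangentially equivalent $N'$, this base does not change at all; only the fibers (the total fibers of the cubes of framed configurations in $N$) change. Your discussion of a tangential equivalence inducing ``a homotopy equivalence on base spaces $F(\mathring{N},i)/\Sigma_i$'' is therefore both unnecessary and based on a misidentification. The paper's parametrization step is over subsets of $\mathring{M}$, and the point is simply that the zigzag built from the fixed embedding of $\xi_{\mathrm{cyl}}$ is uniform in the indexing set.

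Second, the assertion that ``homotopy limits commute with preservers of equivalences applied termwise'' is false: a functor $G$ that merely preserves weak equivalences need not commute with $\mathrm{totfib}$. The fiber over $\{x_j\}$ is $\mathrm{totfib}\bigl(J\mapsto G(\Sigma^\infty_+ F^{\mathrm{fr}}(\mathring{N},J))\bigr)$, with $G$ applied termwise \emph{before} taking the total fiber, not $G$ applied to the total fiber of the spectral cube. This order is still compatible with the argument: the zigzag of equivalences of cubes of spectra coming from the cube-level invariance becomes, after applying $G$ levelwise, a zigzag of equivalences of cubes of pointed spaces, and then $\mathrm{totfib}$ preserves these. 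But the justification you give for this step is incorrect and should be replaced.
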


\section{Duality for subsets of $\mathbb{R}^n$}
In\cite{dold_puppe_1980}, Dold-Puppe proved the following space level generalization of Alexander duality.
\begin{thm}[Dold-Puppe]
Let $X \subset \mathbb{R}^n$. Then the following pairing

\[
X_+ \wedge \mathrm{cone}((\mathbb{R}^n - X) \rightarrow \mathbb{R}^n) \rightarrow \mathrm{cone}((\mathbb{R}^n-0) \rightarrow \mathbb{R}^n) \simeq S^n
\]

\[
(x,y,t)\rightarrow (x-y,t)
\]

is a duality pairing if $X$ is a compact neighborhood retract. In other words, the adjoint $\Sigma^\infty_+ X \rightarrow F(\Sigma^\infty \mathrm{cone}((\mathbb{R}^n - X) \rightarrow \mathbb{R}^n),\Sigma^\infty S^n )$ is an equivalence if $X$ is compact and has a neighborhood that retracts to it.

\end{thm}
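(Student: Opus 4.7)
The plan is to establish the duality by reducing to the case where $X$ is a compact polyhedron embedded in $\mathbb{R}^n$, and then induct on the cells. First I would verify that the class of compact subsets $X$ for which the pairing is a duality is closed under two operations: (i) replacing $X$ by a compact ENR-neighborhood homotopy equivalent to $X$ with an equivalent complement, and (ii) combining two such subsets whose intersection also lies in the class, via Mayer--Vietoris. The pairing is clearly natural: an inclusion $X \subset X'$ gives a commuting diagram comparing the two candidate dualities, and it is compatible with pushouts because both sides are linear in $X$ in the appropriate sense (one side is $\Sigma^\infty_+ X$, the other is $\mathrm{cofiber}((\mathbb{R}^n - X) \to \mathbb{R}^n)$, and for $X = A \cup B$ with open $A,B$ or well-behaved closed $A,B$, both fit in cofiber/pushout squares).

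The base case is $X$ a single point (or, equivalently, a closed disk, by homotopy invariance): here $\mathbb{R}^n - X \simeq S^{n-1}$, so the cofiber is $S^n$, and the pairing $S^0 \wedge S^n \to S^n$ is (up to homotopy) the identity, which is a duality. The inductive step on a finite simplicial complex $P$ proceeds by writing $P = P' \cup \sigma$, where $\sigma$ is a top-dimensional simplex glued along $\partial \sigma \subset P'$. This expresses $P$ as a pushout of compact ENRs, and by induction the duality holds for $P'$, $\sigma$, and $\partial\sigma$. The naturality of the pairing, the five-lemma applied to the resulting map of cofiber sequences in $\mathrm{Sp}$, and the fact that equivalences of spectra are detected on cofiber sequences, then yield duality for $P$.

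To reduce the general compact ENR case to the polyhedral case, I would use that a compact ENR $X \subset \mathbb{R}^n$ admits arbitrarily small compact polyhedral neighborhoods $P$ that retract to $X$; by choosing $P$ small enough (a "regular neighborhood") the inclusion $\mathbb{R}^n - P \hookrightarrow \mathbb{R}^n - X$ is a homotopy equivalence, so $\mathrm{cofiber}((\mathbb{R}^n - P) \to \mathbb{R}^n) \simeq \mathrm{cofiber}((\mathbb{R}^n - X) \to \mathbb{R}^n)$, while $X \simeq P$ via the retraction. Since the pairing is natural in $X$, duality for $P$ transports to duality for $X$.

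The main obstacle is the ENR reduction step: producing polyhedral neighborhoods $P \supset X$ such that both $X \hookrightarrow P$ and $\mathbb{R}^n - P \hookrightarrow \mathbb{R}^n - X$ are homotopy equivalences. The first equivalence follows directly from the ENR definition, but the second requires a genuine geometric input — one has to arrange $P$ so that nothing interesting happens topologically between $X$ and $P$ from the complement's perspective. This is where results on mapping cylinder neighborhoods for compact ENRs (or a direct shape-theoretic argument identifying $\mathbb{R}^n - X$ with a suitable colimit over shrinking polyhedral neighborhoods) enter. Once this step is in place the rest of the proof is a formal induction in the stable homotopy category.
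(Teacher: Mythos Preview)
The paper does not actually prove this theorem: it is quoted as a result of Dold--Puppe with a citation to \cite{dold_puppe_1980}, and no argument is given in the paper itself. So there is no ``paper's own proof'' to compare your proposal against.

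That said, your outline is a standard and essentially correct route to the result. The inductive scheme (point $\Rightarrow$ finite polyhedron via Mayer--Vietoris and the five lemma, then compact ENR via a polyhedral regular neighborhood) is exactly how one typically establishes Spanier--Whitehead/Alexander duality at the space level. You have also correctly identified the one nontrivial geometric step: arranging a compact polyhedral neighborhood $P \supset X$ for which $\mathbb{R}^n - P \hookrightarrow \mathbb{R}^n - X$ is a homotopy equivalence, not just $X \hookrightarrow P$. This is genuinely where the work lies, and your remark that it requires something like mapping-cylinder neighborhoods or a shape-theoretic argument is on point. If you were to flesh this out, the cleanest route is to take a nested sequence of such polyhedral neighborhoods $P_k$ with $\bigcap_k P_k = X$ and observe that $\mathbb{R}^n - X = \operatorname{colim}_k (\mathbb{R}^n - P_k)$ as an increasing union of open sets, so the complement cofiber for $X$ is the homotopy colimit of the complement cofibers for the $P_k$; since the retraction $P_k \to X$ shows all the $P_k$ have the same homotopy type, the system is eventually constant up to equivalence.
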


This pairing has the excellent property that it is natural with respect to inclusion, i.e. covariant in the first variable and contravariant in the second. This makes it a great tool to extend space level duality arguments to diagrams of spaces.

\begin{dfn}
The Dold-Puppe functor is the functor $\mathrm{DP}:\mathrm{Open}(\mathbb{R}^n)^{\mathrm{op}}\rightarrow \mathrm{Top_*}$ given by \[U \rightarrow \mathrm{cone}((\mathbb{R}^n - U) \rightarrow \mathbb{R}^n)\]
\end{dfn}

Though only proved in the case of compact neighborhood retracts, the pairing of Dold-Puppe is a duality pairing for more general subsets.

\begin{dfn}
A subset $U$ of $\mathbb{R}^n$ is behaved if the pairing

\[U_+ \wedge \mathrm{DP}(U)\rightarrow \mathrm{DP}(\mathbb{R}^n-0)\]
is a duality pairing.
\end{dfn}

Excision and the theorem of Dold-Puppe easily show:

\begin{prp} \label{prp:behaved}
If $U\subset \mathbb{R}^n$ is homeomorphic to the interior of a compact manifold with boundary, then $U$ is behaved.
\end{prp}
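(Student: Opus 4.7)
The plan is to reduce $U$ to a compact submanifold $K\subset U$ by removing a collar, apply the theorem of Dold--Puppe to $K$, and promote the conclusion back to $U$ by naturality of the pairing together with an excision argument for $\mathrm{DP}$.

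Concretely, I would pick a compact manifold with boundary $\bar U$ realizing $U\cong\mathring{\bar U}$, choose a collar $c\colon \partial\bar U\times[0,1)\hookrightarrow\bar U$, and set $K:=\bar U\setminus c(\partial\bar U\times[0,1/2))$. Then $K$ is a compact manifold with boundary sitting inside $U\subset\mathbb{R}^n$, the inclusion $K\hookrightarrow U$ is a homotopy equivalence via the standard collar retract, and $K$ is a compact neighborhood retract (being a compact ANR), so the Dold--Puppe theorem applies to $K$. The duality pairing is natural in the inclusion $i\colon K\hookrightarrow U$: it induces $i_+\colon K_+\to U_+$ and $i^*\colon\mathrm{DP}(U)\to\mathrm{DP}(K)$, and the pairings for $K$ and $U$ fit into the evident naturality square. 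Taking Spanier--Whitehead adjoints and using that $i_+$ is an equivalence and that the adjoint of $p_K$ is an equivalence by Dold--Puppe, one reduces to showing that $i^*$ is a weak equivalence.

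By the octahedral axiom applied to $\mathbb{R}^n-U\subset\mathbb{R}^n-K\subset\mathbb{R}^n$, the cofiber of $i^*$ is $\Sigma[(\mathbb{R}^n-K)/(\mathbb{R}^n-U)]$, so it suffices to show this pointed quotient is weakly contractible. Since $(\mathbb{R}^n-K)\setminus(\mathbb{R}^n-U)=U\setminus K$ equals the open half-collar $c(\partial\bar U\times(0,1/2))$, excision allows one to compute the quotient locally near $U\setminus K$ and identifies it with the open cone $(\partial\bar U\times[0,1/2))/(\partial\bar U\times\{0\})$, which is contractible via the straight-line homotopy $(x,t)\mapsto(x,(1-s)t)$ to the apex.

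The main obstacle is this last identification in positive codimension. In the codimension-zero case, $U\setminus K$ is open in $\mathbb{R}^n$ and a direct deformation retract of $\mathbb{R}^n-K$ onto $\mathbb{R}^n-U$ is given by pushing the collar toward $\partial\bar U$ and fixing $\mathbb{R}^n\setminus\bar U$, with continuity at $\partial\bar U$ automatic. In positive codimension a pointwise retract fails because arbitrarily small $\mathbb{R}^n$-neighborhoods of a collar point meet $\mathbb{R}^n-U$ in transverse directions, so one must first apply standard excision to a closed tubular neighborhood of $c(\partial\bar U\times[0,1/2))$ in $\mathbb{R}^n$ and then run the codimension-zero picture fiberwise over the collar before reassembling.
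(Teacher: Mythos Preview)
Your approach is exactly what the paper has in mind: it offers no argument beyond the phrase ``Excision and the theorem of Dold--Puppe easily show,'' and your reduction to a compact core $K\subset U$ followed by an excision comparison of $\mathrm{DP}(U)$ with $\mathrm{DP}(K)$ is the natural unpacking of that hint. In codimension zero your argument is correct; the identification of $(\mathbb{R}^n-K)/(\mathbb{R}^n-U)$ with the open cone on $\partial\bar U$ goes through because $h^{-1}$ carries any compact subset of $U$ into a compact subset of $\mathring{\bar U}$, so the collar $h\circ c(\partial\bar U\times(0,\epsilon))$ eventually avoids it, and compactness of $\partial\bar U$ handles the converse direction.

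One remark on your positive-codimension sketch: invoking a tubular neighborhood of the half-collar in $\mathbb{R}^n$ is not innocent, since $U$ is only assumed to be a topological manifold topologically embedded in $\mathbb{R}^n$, and such embeddings need not be locally flat or admit normal bundles. So that step, as written, is a genuine gap. However, this is harmless for the paper's purposes: the only application (to the subsets $\mathring{\mu}^{2^I}_{i,\mathrm{conf}}(J)\subset\mathbb{R}^{NI}$) is in codimension zero, where your direct argument already suffices and no tubular neighborhood is needed.
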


\section{Fibrations over fibrations and invariance of bundles over configuration spaces}\label{sec:fib}

In this section, we study the combinatorics of fibrations over fibrations where we allow relativity in the base space and the fibers of either fibration. As an application, one can quickly generalize the homotopy invariance of stabilized configuration spaces to bundles over configuration spaces.

\begin{dfn}
A relative fibration $ (E,e)\rightarrow X$ is a fibration $E \rightarrow X$ such that the restriction $e\rightarrow X$ is a fibration.
\end{dfn}

 If the underlying fibration of a relative fibration is a fiber bundle, we may also refer to it as a relative fiber bundle. We will use the notation $\hat\partial$ to indicate the subfibration of a relative fibration; we reserve the unadorned $\partial$ to refer to the boundary of a manifold. Notably, if $\xi \rightarrow M$ is a manifold bundle with manifold fibers and $\hat\partial \xi$ is the fiberwise boundary fibration, it only agrees with $\partial \xi$ if $M$ is a manifold without boundary.

\begin{dfn}
Given a relative fibration $ (E,\hat\partial E) \to X$ and $A \subset X$, define the relative Thom space \[(X,A)^E:= E \cup \mathrm{cone}(E|_A \cup \hat\partial E)\]
Similarly, 
the reduced relative Thom space \[(X,A)^{\bar{E}}:= E/(E|_A \cup \hat\partial E).\]
\end{dfn}

In this notation, the subfibration $\hat\partial E$ is implicit. Some basic homotopy theory tells us:

\begin{prp}
For a relative fibration $(E,\hat\partial E)  \rightarrow  X$ with distinguished $A \subset X$, if the inclusion $E|_A \cup \hat\partial E \rightarrow E$ is a cofibration, then there is an equivalence $(X,A)^E \simeq (X,A)^{\bar{E}}$.
\end{prp}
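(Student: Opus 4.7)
The plan is to recognize that both sides of the claimed equivalence are standard constructions associated to the single inclusion $B \hookrightarrow E$, where $B := E|_A \cup \hat\partial E$. Unpacking the definitions, $(X,A)^E = E \cup \mathrm{cone}(B)$ is the mapping cone (i.e.\ the homotopy cofiber) of $B \hookrightarrow E$, while $(X,A)^{\bar E} = E/B$ is the strict cofiber. There is an evident natural map
\[
\phi : (X,A)^E \longrightarrow (X,A)^{\bar E}
\]
which is the identity on $E$ and collapses the cone factor to the image of $B$ (i.e.\ to the basepoint of $E/B$). The content of the proposition is that $\phi$ is a weak equivalence.

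The key step is to invoke the well-known homotopy-theoretic fact that when $B \hookrightarrow E$ is a (closed) cofibration, the canonical comparison map from the mapping cone to the quotient is a homotopy equivalence. Concretely, one can produce a homotopy inverse by choosing a retraction data for the cofibration (a Strøm-type argument), or equivalently by noting that the quotient $E/B$ is itself a model for the homotopy cofiber whenever $B \hookrightarrow E$ is a cofibration, and that any two models of the homotopy cofiber are canonically equivalent. Since the map $\phi$ agrees with the universal comparison map from the double mapping cylinder (homotopy cofiber) to the pushout (strict cofiber) built on the diagram $\ast \leftarrow B \hookrightarrow E$, the conclusion follows.

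The only hypothesis needed from the fibration structure is the one explicitly assumed, namely that $B \hookrightarrow E$ is a cofibration; the fibration data on $(E, \hat\partial E) \to X$ plays no direct role in this particular comparison and only serves to make the construction of $(X,A)^E$ and $(X,A)^{\bar E}$ sensible relative Thom-style objects. The main bookkeeping is simply to verify that the subspaces $E|_A$ and $\hat\partial E$ assemble into $B$ in a way that is genuinely a cofibration in $E$; but this is exactly the standing hypothesis. Thus no real obstacle arises, and the proof is a one-line application of the cofibration-implies-cofiber-equals-quotient principle.
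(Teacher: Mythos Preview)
Your proposal is correct and matches the paper's approach: the paper does not supply a proof at all, introducing the proposition with ``Some basic homotopy theory tells us,'' and you have simply spelled out what that basic homotopy theory is---namely that for a cofibration $B\hookrightarrow E$ the collapse map from the mapping cone to the strict quotient is a homotopy equivalence. Your observation that the fibration hypotheses play no role beyond making the definitions sensible is also accurate.
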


\begin{dfn}
A relative fibration over a fibration is a relative fibration \\$(E,\hat\partial E) \rightarrow \xi$  together with a relative fibration $(\xi,\hat\partial \xi) \rightarrow X$ and a subspace $A \subset X$.
\end{dfn}

We emphasize that the above definition is relative in three ways: the total space of $E$, the total space of $\xi$, and the space $X$. We will refer to such relative fibrations over fibrations by $(E,\hat\partial E)\rightarrow (\xi,\hat\partial \xi) \rightarrow (X,A)$, but be warned that projections are not maps of relative spaces, and $(E,\hat\partial E)$ is not the relative fiber of $(\xi,\hat\partial \xi)$. 

Let $I$ be a finite set.

\begin{dfn}
 Given a relative fibration $(\xi,\hat\partial \xi) \rightarrow X$, we define \[(\xi^I,\hat\partial (\xi^I)):= (\xi^I,\{(p_i) \in \xi^I| \exists p_i \in \hat\partial \xi\}).\]
 This is a fibration over $X^I$ in the obvious way.
\end{dfn}

Given a fibration over a fibration \[(E,\hat\partial E)\rightarrow (\xi,\hat\partial \xi) \rightarrow (X,A),\] we may take external products to obtain another relative fibration over a fibration
\[(E^i,\hat\partial (E^I))\rightarrow (\xi^I,\hat\partial( \xi^I)) \rightarrow (X^I,\{(x_i)| \exists x_i \in A\}).\]

\begin{dfn}
Given
$ (E,\hat\partial E) \to (\xi,\hat\partial \xi) \rightarrow (X,A)$ we define \[(\xi,\hat\partial \xi)^{E^I /\Delta^{\mathrm{fat}}}:=(\xi^I,\hat \partial (\xi^I) \cup \xi^I|_{\Delta^{\mathrm{fat}}(X^I) \cup \{(x_i)| \exists x_i \in A\}})^{E^I}\]

Similarly,

\[(\xi,\hat\partial \xi)^{\bar{E}^I /\Delta^{\mathrm{fat}}}:=(\xi^I,\hat\partial (\xi^I)  \cup \xi^I|_{\{(x_i)| \exists x_i \in A\}})^{\bar{E}^I} \cup \mathrm{cone}(E^I|_{\Delta^{\mathrm{fat}}(X^I)})\]

\end{dfn}

Colloquially, these constructions are killing the boundary fibration of $E^I$, $E^I$ restricted to the boundary of $\xi^I$, anything over the fat diagonal of $X$, and anything over a tuple which has an element in $A$. 

The construction $(\xi,\hat\partial \xi)^{\bar{E}^I /\Delta^{\mathrm{fat}}}$ is a reduced version of $(\xi,\hat\partial \xi)^{E^I /\Delta^{\mathrm{fat}}}$, and these constructions will be interchangeable when certain cofibrancy conditions are met. It should be noted a cone still appears in the definition of $ (\xi,\hat\partial \xi)^{\bar{E}^I /\Delta^{\mathrm{fat}}}$; this avoids potentially technical questions about whether certain fat diagonals include as cofibrations.

\begin{prp}\label{prp:cofib}
Given $(E,\hat\partial E) \rightarrow (\xi,\hat\partial \xi) \rightarrow (X,A)$  such that $E|_{\xi|_A \cup \hat\partial \xi} \cup \hat\partial E$ includes into $E$ as a cofibration, then $(\xi,\hat\partial\xi)^{E^I/\Delta^{\mathrm{fat}}}\simeq (\xi,\hat\partial\xi)^{\bar{E}^I/\Delta^{\mathrm{fat}}}$.
\end{prp}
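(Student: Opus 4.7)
The plan is to reduce the claim to the earlier equivalence $(X,A)^E \simeq (X,A)^{\bar{E}}$ by first establishing the relevant cofibration on the $I$-fold product, and then isolating the contribution of the fat diagonal cone. Set $C := E|_{\xi|_A \cup \hat\partial \xi} \cup \hat\partial E$, so the hypothesis asserts that $C \hookrightarrow E$ is a cofibration. Let $W := E^I|_{B_0} \cup \hat\partial(E^I) \subseteq E^I$ with $B_0 := \hat\partial(\xi^I) \cup \xi^I|_{\{(x_i) \mid \exists i,\ x_i \in A\}}$, and $V := E^I|_{\Delta^{\mathrm{fat}}(X^I)}$. Unpacking the definitions shows that $W$ is precisely the external fat diagonal of $C$, namely $\{(e_i) \in E^I \mid \exists i,\ e_i \in C\}$.

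First, I would apply the standard iterated pushout--product argument to the cofibration $C \hookrightarrow E$ to conclude that $W \hookrightarrow E^I$ is itself a cofibration. This is the classical statement that the union $\bigcup_i E \times \cdots \times C \times \cdots \times E \hookrightarrow E^I$ is a cofibration whenever $C \hookrightarrow E$ is one, which is exactly the inclusion $W \hookrightarrow E^I$ under the identification above.

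Next, I would rewrite both sides as mapping cones. By the definitions preceding the proposition, the left-hand side $(\xi,\hat\partial\xi)^{E^I/\Delta^{\mathrm{fat}}}$ is the mapping cone of the inclusion $W \cup V \hookrightarrow E^I$, while the right-hand side $(\xi,\hat\partial\xi)^{\bar{E}^I/\Delta^{\mathrm{fat}}}$ is the mapping cone of the composite $V \hookrightarrow E^I \twoheadrightarrow E^I/W$. There is a natural collapse map between them, induced, via the identification $\mathrm{cone}(W \cup V) \cong \mathrm{cone}(W) \cup_{\mathrm{cone}(W \cap V)} \mathrm{cone}(V)$ with common tip, by sending the $\mathrm{cone}(W)$ summand to the basepoint. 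The cofibration established in the previous step provides the equivalence $E^I \cup \mathrm{cone}(W) \simeq E^I/W$, and this equivalence is natural enough to be compatible with the attaching of $\mathrm{cone}(V)$ along $V \hookrightarrow E^I$ and its quotient $V \to E^I/W$; attaching cones along a pair of maps related by a homotopy equivalence yields equivalent mapping cones.

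The main obstacle is bookkeeping---keeping track of which subspaces are being coned, quotiented, or attached, and of the compatibility of all the maps. The essential point is the general principle that iteratively forming the homotopy cofiber along $W$ and then $V$ produces the same space (up to equivalence) as taking a single homotopy cofiber along $W \cup V$, provided $W \hookrightarrow E^I$ is a cofibration; once the pushout--product of the first step has been invoked, the identification of the two mapping cones is a direct diagram chase, and naturality of the cone construction takes care of the rest.
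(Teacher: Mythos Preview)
Your argument is correct and is exactly the sort of elaboration the paper has in mind: the paper states this proposition without proof, treating it as a routine extension of the earlier observation that $(X,A)^E \simeq (X,A)^{\bar E}$ whenever $E|_A \cup \hat\partial E \hookrightarrow E$ is a cofibration. Your two steps---the pushout--product argument promoting the cofibration $C\hookrightarrow E$ to $W\hookrightarrow E^I$, and the identification of the two mapping cones by collapsing $\mathrm{cone}(W)$---are precisely what is needed to make that extension rigorous, and they match the paper's implicit reasoning.

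One small remark on presentation: in passing from $E^I \cup \mathrm{cone}(W\cup V)$ to $(E^I/W)\cup_V \mathrm{cone}(V)$ you are implicitly using that coning off $W\cup V$ in one step is equivalent to coning off $W$ and then $V$ in succession (the two cone points versus one), which is standard but worth a sentence since $W\cap V$ is typically nonempty. Otherwise the bookkeeping is sound, and your identification $W=\{(e_i)\mid \exists i,\ e_i\in C\}$ is the key unpacking that makes the pushout--product lemma apply.
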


These constructions are preserved under a suitable notion of equivalence (which can surely be relaxed):

\begin{prp}\label{prp:equiv}
Suppose we have a map of relative fibrations over fibrations:

\[\begin{tikzcd}
	{(E,\hat\partial E)} & {(E',\hat\partial E')} \\
	{(\xi,\hat\partial\xi)} & {(\xi',\hat\partial\xi')} \\
	{(X,A)} & {(X',A')}
	\arrow[from=1-1, to=1-2]
	\arrow[from=2-1, to=2-2]
	\arrow[from=3-1, to=3-2]
	\arrow[from=2-1, to=3-1]
	\arrow[from=1-1, to=2-1]
	\arrow[from=1-2, to=2-2]
	\arrow[from=2-2, to=3-2]
\end{tikzcd}\]
where the bottom most horizontal map is the inclusion of a relative deformation retract, the middle horizontal map is the inclusion of a relative deformation retract for which the deformation covers the bottom deformation retraction, and the top most horizontal map is an inclusion covering the middle, and, when considered as an inclusion into $(E',\hat\partial E')|_{\xi}$, is a  fiberwise relative deformation retract which covers the identity of $\xi$.

Supposing this, the inclusion $(\xi,\hat\partial \xi)^{E^I /\Delta^{\mathrm{fat}}} \rightarrow (\xi',\hat\partial \xi')^{E'^I /\Delta^{\mathrm{fat}}}$ is a homotopy equivalence.

\end{prp}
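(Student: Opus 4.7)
The plan is to unwind the Thom-like construction as a mapping cone and verify that each of its constituent pieces is preserved up to homotopy equivalence by the given retracts. Unraveling the definition, $(\xi,\hat\partial\xi)^{E^I/\Delta^{\mathrm{fat}}}$ is $E^I$ with a cone attached along the subspace
\[Z^I := \hat\partial(E^I) \cup E^I|_{\hat\partial(\xi^I)} \cup E^I|_{\Delta^{\mathrm{fat}}(X^I) \cup \{(x_i)\mid \exists x_i \in A\}},\]
and similarly for the primed version. It therefore suffices to produce a map of pairs $(E^I, Z^I) \hookrightarrow (E'^I, Z'^I)$ in which both components are homotopy equivalences; standard homotopy invariance of the mapping cone then delivers the claim.

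First I would assemble a single relative deformation retract $(E', \hat\partial E') \to (E, \hat\partial E)$ covering the given retracts on $(\xi,\hat\partial\xi)$ and $(X,A)$. Since the top horizontal map is only a fiberwise retract over $\xi$, I would first deform $E'$ into $E'|_\xi$ by lifting the deformation retraction $\xi' \to \xi$ through the relative fibration $E' \to \xi'$ via the homotopy lifting property; composing with the given fiberwise retract $E'|_\xi \to E$ then yields the desired compatible retraction. Taking external $I$-th powers, products of deformation retracts remain deformation retracts, and each subspace appearing in $Z^I$ is preserved under the resulting retract $E'^I \to E^I$: $\hat\partial(E'^I)$ maps into $\hat\partial(E^I)$ because $\hat\partial E' \to \hat\partial E$ is part of the relative data, the preimage of $\Delta^{\mathrm{fat}}(X'^I)$ is sent to that of $\Delta^{\mathrm{fat}}(X^I)$ by continuity, and the preimage of $\{(x_i) \mid \exists x_i \in A'\}$ maps into its unprimed analogue since $(X',A') \to (X,A)$ is relative. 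Hence $(E^I, Z^I) \hookrightarrow (E'^I, Z'^I)$ is a deformation retract of pairs.

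The main obstacle is the HLP step: to ensure the lifted deformation of $E'$ actually sends $\hat\partial E'$ into itself throughout, the lift must first be constructed on $\hat\partial E' \to \hat\partial \xi'$ and then extended to all of $E'$, which uses the kind of boundary cofibration hypothesis already implicit in Proposition \ref{prp:cofib}. Once that compatibility is arranged, the rest is routine bookkeeping about external products, deformation retracts, and the invariance of mapping cones on cofibration pairs.
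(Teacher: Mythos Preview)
Your proposal is correct and follows essentially the same route as the paper: construct a deformation retraction of $(E',\hat\partial E')$ onto $(E,\hat\partial E)$ that covers the given retractions on the lower rows, then pass to $I$-fold products and conclude by homotopy invariance of the cone. The only difference is in how the relative lift is arranged: where you lift via the HLP and flag the need to first lift on $\hat\partial E'$ and then extend, the paper instead trivializes the pullback $H^*((E',\hat\partial E'))$ along the deformation $H:(\xi',\hat\partial\xi')\times I\to(\xi',\hat\partial\xi')$, which gives a lift respecting $\hat\partial E'$ in one stroke and sidesteps the extension issue you identified.
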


\begin{proof}
What is necessary to check is that $(E',\hat\partial E')$ deformation retracts to $(E,\hat \partial E)$ in a manner that preserves all the fiber and relativity conditions. To construct such a deformation retraction, we start by considering the given deformation retraction $H:(\xi',\hat\partial \xi') \times I \rightarrow (\xi',\hat \partial\xi')$. By choosing a relative isomorphism \[(H^*(E'),H^*(\hat\partial E'))\cong (E' \times [0,1],\hat\partial  E' \times [0,1]),\] we may construct a fiberwise relative deformation retraction of $(E',\hat\partial E')$ to $(E',\hat\partial E')|_{\xi}$ which covers the deformation retraction of $\xi'$ to $\xi$. Composing with the given deformation retraction of $(E',\hat\partial E')|_{\xi}$ to $(E,\hat \partial E)$ finishes our argument.
\end{proof}

\begin{dfn}
    Given relative fibrations  $(\xi_i,\hat\partial \xi_i) \rightarrow (X_i,A_i)$ a relative fiberwise map is a pair of relative maps
    \[f:(X_1,A_1) \rightarrow (X_2,A_2)\]
    \[g:(\xi_1,\hat\partial\xi_1) \rightarrow (\xi_2,\hat\partial\xi_2)\]
such that $g$ is a lift of $f$.
\end{dfn}

\label{def:fibeq}

\begin{dfn}
Given relative fibrations $(\xi_i,\hat\partial \xi_i) \rightarrow (X_i,A_i)$, a relative fiberwise homotopy equivalence $(\xi_1,\hat\partial \xi_1)\rightarrow (\xi_2,\hat\partial \xi_2)$ is a relative fiberwise map which admits an inverse up to homotopy through relative fiberwise maps.
\end{dfn}
In order to get used to some of the combinatorics of relative fibrations over fibrations, we first state a relaxed version of Proposition \ref{prp:duality} which follows from an analog of \cite[Lemma 5.3]{malin} in the setting of relative fibrations over fibrations. Proposition \ref{prp:duality} will ultimately be proven in a more technical way by analyzing mapping cylinders of fiberwise homotopy equivalences. If $\xi$ is a bundle with smooth manifold fibers, we we will always let $\hat\partial\xi$ denote the bundle of fiberwise boundaries.

\begin{prp} \label{prp:weakinv}
Let $(\xi, \hat\partial\xi) \rightarrow (M,\partial M)$ be a smooth, relative fiber bundle. Fix a boundary preserving embedding of the manifold $\xi$ into $\mathbb{R}^{N-1} \times [0,\infty)$ with tubular neighborhood $\mu$. There is an equivariant equivalence \[\Sigma^\infty_+ (\xi^k|_{F(\mathring{M},k)}) \simeq \Sigma^{Nk} ((\Sigma^\infty(\xi,\hat\partial \xi )^{\mu^k /\Delta^{\mathrm{fat}}})^\vee),\] and thus, $\Sigma^\infty_+ (\xi^k|_{F(\mathring{M},k)})$ is a relative fiberwise homotopy invariant of the relative fiber bundle $(\xi,\hat\partial \xi)\rightarrow (M,\partial M)$.
\end{prp}

A similar result appeared in work of Moriya  which additionally takes into account action of the $E_1$ operad on the left hand side \cite[Theorem 1.1]{moriya}.

The righthand side of this equivalence certainly looks complicated, but it arises from simply keeping track of the boundary of the restriction of $\mu^k$ to $\xi^k|_{F(\mathring{M},k)}$ as a subset of $\mathbb{R}^N$ and applying Dold-Puppe duality. Before extending this result to include projections, we give two example applications of this proposition.

First, we can recover the main theorem of\cite{malin}, that $\Sigma^\infty_+ F(\mathring{M},k)$ is a proper homotopy invariant, by taking $\xi=M \xrightarrow{\mathrm{id}} M$ and appealing to the following:

\begin{prp}
If $\mathring{M}_1$ is properly homotopy equivalent to $\mathring{M}_2$, then there is a relative equivalence \[(M_1,\partial M_1) \simeq (M_2,\partial M_2) .\]
\end{prp}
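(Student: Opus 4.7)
The plan is to recover the pair $(M_i,\partial M_i)$ from the proper homotopy type of $\mathring{M}_i$ by exploiting the collar structure. Choose a collar $c_i:\partial M_i\times [0,1)\hookrightarrow M_i$ with $c_i(x,0)=x$, and reparametrize $(0,1)\to [0,\infty)$ to identify $\mathring{M}_i$ homeomorphically with $M_i\cup_{\partial M_i}\partial M_i\times [0,\infty)$, gluing $\partial M_i\subset M_i$ to $\partial M_i\times\{0\}$. Under this identification the cylindrical end $E_i:=\partial M_i\times [0,\infty)$ carries all the non-compactness of $\mathring{M}_i$: its complement $M_i$ is compact, so any proper map between the $\mathring{M}_i$ must send far enough portions of $E_1$ into $E_2$.

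Given a proper homotopy equivalence $f:\mathring{M}_1\to\mathring{M}_2$ with proper homotopy inverse $g$, I would first use properness to arrange, after a proper homotopy, that $f(\partial M_1\times [T,\infty))\subset \partial M_2\times [0,\infty)$ for some $T\gg 0$, and analogously for $g$. Post-composing with the fiberwise deformation retraction of $\partial M_2\times [0,\infty)$ onto $\partial M_2$ and restricting to $\partial M_1\times\{T\}\cong \partial M_1$ produces a boundary map $\bar f:\partial M_1\to\partial M_2$; applying the same construction to $g$ and to the proper homotopies witnessing $gf\simeq\mathrm{id}$ and $fg\simeq\mathrm{id}$ yields $\bar g:\partial M_2\to\partial M_1$ with $\bar g\bar f\simeq\mathrm{id}$ and $\bar f\bar g\simeq\mathrm{id}$.

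To promote these data to an equivalence of pairs, I would perform a further proper homotopy to arrange $f$ to have the strict product form $\bar f\times\mathrm{id}_{[T,\infty)}$ on the deep end $\partial M_1\times [T,\infty)$ (landing in $\partial M_2\times [0,\infty)$). Then $f$ extends continuously to a map of pairs $\tilde f:(M_1,\partial M_1)\to(M_2,\partial M_2)$: on the collar complement it agrees with $f$ already (suitably reparametrized back into $M_1$), and on the boundary it equals $\bar f$. Performing the same normalization for $g$, the composite $\tilde g\tilde f$ is pair-homotopic to the identity through the previously normalized proper homotopy, and similarly for $\tilde f\tilde g$, yielding the claimed relative equivalence.

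The main obstacle is the normalization step: arranging a proper homotopy equivalence to have a strict product form on a deep cylindrical end while preserving its proper homotopy class. This is a standard but somewhat fiddly application of the homotopy extension property for the cofibration $\partial M_i\times\{T\}\hookrightarrow \partial M_i\times [T,\infty)$, combined with an isotopy along $[T,\infty)$ that compresses the end. Once this normalization is in place, promotion to a pair-homotopy equivalence is essentially formal, using the homotopy extension property for the cofibrations $\partial M_i\hookrightarrow M_i$.
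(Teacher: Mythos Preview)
Your approach is correct but differs from the paper's. You work geometrically: use a collar to write $\mathring{M}_i \cong M_i \cup_{\partial M_i} \partial M_i \times [0,\infty)$, push the proper equivalence to product form on a deep end, and then compactify at infinity to obtain a map of pairs. The paper instead argues functorially through the one-point compactification: it sets $A_i$ to be the space of paths $[0,\infty]\to \mathring{M}_i^+$ sending $\infty$ to $\infty$ (the ``space of ends'') and $X_i=\operatorname{cyl}(A_i\xrightarrow{\mathrm{ev}_0}\mathring{M}_i)$, both of which are manifestly proper homotopy invariants, and then observes that under the identification $\mathring{M}_i^+\cong M_i\cup\operatorname{cone}(\partial M_i)$ the inclusion $(M_i,\partial M_i)\hookrightarrow (X_i,A_i)$ given by cone-line paths is a relative deformation retract. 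The paper's route entirely sidesteps the normalization step you flag as the main obstacle---no straightening of $f$ near infinity is needed, because the pair $(X_i,A_i)$ is built functorially from $\mathring{M}_i^+$ and the comparison with $(M_i,\partial M_i)$ happens once, internally to each manifold. Your route is closer to the classical hands-on argument and makes the boundary map $\bar f$ explicit, at the cost of the collar bookkeeping; the paper's is slicker but hides the geometry inside the path-space model.
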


\begin{proof}
Let $A_i$ denote the space of paths $[0,\infty] \rightarrow M_i^+$ such that the preimage of $\infty$ is $\infty$; this is sometimes called the space of ends of $M$. Let $X_i$ denote $\operatorname{cyl}(A_i \xrightarrow{\mathrm{ev}_0} M_i)$. Evidently, there is an equivalence $(X_1,A_1) \simeq (X_2,A_2)$. Fixing homeomorphisms $M_i^+ \cong M_i \cup \operatorname{cone}(\partial M_i)$, we may include $\partial M_i \rightarrow A_i$ as the space of paths starting at $z \in \partial M_i$ and flowing up the cone lines. It is straightforward to see that this is a deformation retract. This implies $(M_1,\partial M_1) \simeq (M_2,\partial M_2)$.
\end{proof}

\begin{dfn}
Given smooth manifolds with boundary $M_1,M_2$ a relative tangential homotopy equivalence $f:M_1 \rightarrow M_2$ is a relative homotopy equivalence such that $f^*(TM_2)\cong TM_1$.
\end{dfn}

If we let $\xi$ be the orthonormal frame bundle and apply Proposition \ref{prp:weakinv}, we conclude:

\begin{cor} If $M_1$ is relatively tangentially homotopy equivalent to $M_2$, then \[\Sigma^\infty_+ F^{\mathrm{fr}}(\mathring{M}_1,k) \simeq \Sigma^\infty_+ F^{\mathrm{fr}}(\mathring{M}_2,k),\] where $F^{\mathrm{fr}}(-,k)$ denotes the configurations of $k$ points with a framing at each point. 
\end{cor}

\section{Spivak normal fibrations}\label{sec:poincare}
Let $M_1,M_2$ be compact smooth manifolds with boundary with a relative homotopy equivalence $f:(M_1,\partial M_1)\simeq (M_2,\partial M_2).$  Let $\xi_i \rightarrow M_i$ be two bundles with compact, smooth manifold fibers, and a relative fiberwise homotopy equivalence $F:(\xi_1,\hat \partial \xi_1) \rightarrow (\xi_2,\hat\partial \xi_2)$ covering $f$.

For convenience, we assume that $F$ is a map of simplicial complexes for some triangulation of the $\xi_i$. Our strategy to prove that the $I$-cubes \[J \rightarrow \Sigma^\infty_+ F^\xi(\mathring{M_i},J),\] 
for $J \subset I$, are equivalent will be to construct a zigzag of equivalences passing through a cube associated to the relative fiberwise equivalence $F:\xi_1 \simeq \xi_2$. In order to do this, we will have to recall some facts about regular neighborhoods of Poincaré duality pairs. The homotopical nature of such neighborhoods was first studied by Spivak\cite{spivak_1967}. 

\begin{dfn}
A pair $(X,A)$ is an $n$-dimensional Poincaré duality pair if there is an $\alpha \in H_n(X,A)$ so that 
\[H^*(X)\xrightarrow{\cap \alpha} H_{n-*}(X,A)\]
\[H^*(X,A)\xrightarrow{\cap \alpha} H_{n-*}(X)\]
\[H^*(A) \xrightarrow{\cap \partial \alpha} H_{n-*+1}(A)\]
are all isomorphisms. Here $\partial$ denotes the connecting homomorphism in homology for the pair $(X,A)$.
\end{dfn}

Let $r:\mathrm{cyl}(f) \rightarrow M_2$ be the collapse map. We define $(\xi_{\mathrm{cyl}},\hat\partial\xi_{\mathrm{cyl}}):=(r^*(\xi_2),r^*(\hat\partial \xi_2 ))$. Let \[\partial \xi_{\mathrm{cyl}}=\xi_{\mathrm{cyl}}|_{\mathrm{cyl}(f|_{\partial M_1})}\cup \xi_{\mathrm{cyl}}|_{M_1} \cup \xi_{\mathrm{cyl}}|_{M_2} \cup \hat\partial \xi_{\mathrm{cyl}}.\]
We observe that the pair $(\xi_{\mathrm{cyl}}, \partial \xi_{\mathrm{cyl}}) $ forms a Poincaré duality pair, since it is equivalent to $(\xi_i\times [0,1],\partial (\xi_i \times [0,1]))$ which is a manifold and its boundary. 

We fix an embedding $\xi_{\mathrm{cyl}} \rightarrow D^N \times [0,1)$, avoiding $\partial D^N \times (0,1)$,
which sends only $\partial \xi_{\mathrm{cyl}}$ into $D^N \times \{0\}$, and sends only $\partial (\xi_{\mathrm{cyl}}|_{M_1} \sqcup \xi_{\mathrm{cyl}}|_{M_2})$ into $\partial D^N\times \{0\}$. This embedding can be assumed to be simplicial via our assumption that $F$ is simplicial, so it has a regular neighborhood we call $p:(\mu_{\mathrm{cyl}},\hat\partial \mu_{\mathrm{cyl}})\rightarrow \xi_{\mathrm{cyl}}$.
Restricting our embedding to  $\xi_{\mathrm{cyl}}|_{M_i}$, yields embeddings into $D^N$ which send $\partial(\xi_{\mathrm{cyl}}|_{M_i})$ into $\partial D^n$. These have regular neighborhoods given by $p|_{\xi_{\mathrm{cyl(f)}}|_{M_i}}$. We call these $(\mu_i,\hat\partial \mu_i)$. Note that the $\mu_i$ are honest disk bundles via the tubular neighborhood theorem, but $\mu_{\mathrm{cyl}}$ is possibly not even a fibration.

Recall that a point inside $\mathrm{PathFib}(g: X \rightarrow Y)$ consists of a pair $(x,\gamma)$ of a point $x \in X$ and a path $\gamma$ in $Y$ such that $\gamma(0)=g(x)$. There is a fibration \[\mathrm{ev}^g:\mathrm{PathFib}(g: X \rightarrow Y)\rightarrow Y\] \[(x,\gamma) \rightarrow \gamma(1).\]

\begin{dfn}
If $(X,A) \rightarrow (D^d,\partial D^d)$ is an embedding of an $n$-dimensional Poincaré duality pair with closed regular neighborhood $(\mu,\hat\partial \mu)$, then the Spivak normal fibration is 
\[(\operatorname{PathFib}( \mu \rightarrow X),\operatorname{PathFib}(\hat\partial \mu \rightarrow X))\rightarrow (X,A).\]
\end{dfn}

It is a theorem of Spivak\cite{spivak_1967} that this relative fibration has a fiber homotopy equivalent to $(D^{d-n},S^{d-n-1}) $.

Let $(\nu_{\mathrm{cyl}},\hat\partial \nu_{\mathrm{cyl}})$ denote the Spivak normal pair for the regular neighborhood $\mu_\mathrm{cyl}$ of $(\xi_{\mathrm{cyl}},\partial \xi_{\mathrm{cyl}})$. By the construction of the path fibration, there are maps $\mu_i \rightarrow \nu_{\mathrm{cyl}}$ via inclusion of the constant paths.

\begin{prp}\label{prp:squaresquare}
The following diagram is a zigzag of equivalence of relative fibrations over fibrations in the sense of Proposition \ref{prp:equiv}:
\[\begin{tikzcd}
	{(\mu_1,\hat\partial \mu_1)} & {(\nu_{\mathrm{cyl}},\hat\partial\nu_{\mathrm{cyl}})} & {(\mu_2,\hat\partial \mu_2)} \\
	{(\xi_{\mathrm{cyl}}|_{M_1},\hat\partial\xi_{\mathrm{cyl}}|_{M_1})} & {(\xi_{\mathrm{cyl}},\hat\partial\xi_{\mathrm{cyl}})} & {(\xi_{\mathrm{cyl}}|_{M_2}},\hat\partial\xi_{\mathrm{cyl}}|_{M_2}) \\
	{(M_1,\partial M_1)} & {(\mathrm{cyl}(f),\mathrm{cyl}(f|_{\partial M_1}) \cup \partial M_2)} & {(M_2,\partial M_2)}
	\arrow["\simeq", from=2-1, to=2-2]
	\arrow["\simeq"', from=2-3, to=2-2]
	\arrow[from=1-1, to=2-1]
	\arrow[from=1-2, to=2-2]
	\arrow[from=1-3, to=2-3]
	\arrow["\simeq", from=1-1, to=1-2]
	\arrow["\simeq"', from=1-3, to=1-2]
	\arrow["\simeq", from=3-1, to=3-2]
	\arrow["\simeq"', from=3-3, to=3-2]
	\arrow[from=2-1, to=3-1]
	\arrow[from=2-2, to=3-2]
	\arrow[from=2-3, to=3-3]
\end{tikzcd}\]

\end{prp}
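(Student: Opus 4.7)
The plan is to verify, column by column, the three hypotheses of Proposition \ref{prp:equiv} for each leg of the zigzag—starting from the bottom row and lifting to each higher row, using the fact that the entire vertical structure is built by pullback along the collapse map $r$ and by regular-neighborhood formation. I treat the left and right legs in parallel: both arguments share the same structure, though the $i=2$ case is slightly easier since $M_2$ sits at the target end of the mapping cylinder. Thus what we really need to check is that each horizontal arrow is a (relative, fiberwise, or fiberwise-over-the-identity) deformation retract of the appropriate kind.

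For the bottom row, the inclusion $(M_i,\partial M_i)\hookrightarrow (\mathrm{cyl}(f),\mathrm{cyl}(f|_{\partial M_1})\cup \partial M_2)$ is a relative deformation retract: for $i=2$ this is the standard deformation of a mapping cylinder onto its target end, which automatically preserves the distinguished subcomplex; for $i=1$ this uses that $f$ is a relative homotopy equivalence and that $M_1 \hookrightarrow \mathrm{cyl}(f)$ is a cofibration, so a homotopy inverse $g$ together with homotopies rel $\partial$ can be unspooled along the cylinder coordinate to produce the retraction. For the middle row, since $\xi_{\mathrm{cyl}}=r^*(\xi_2)$ and $\hat\partial \xi_{\mathrm{cyl}}=r^*(\hat\partial \xi_2)$ are pullbacks of bundles, the bottom deformation retraction lifts (via standard homotopy lifting for bundles) to a deformation retraction of $(\xi_{\mathrm{cyl}},\hat\partial \xi_{\mathrm{cyl}})$ onto $(\xi_{\mathrm{cyl}}|_{M_i},\hat\partial \xi_{\mathrm{cyl}}|_{M_i})$ covering it.

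The top row is the main obstacle, and the step where we genuinely use Spivak's theorem. We must show that the constant-path inclusion $(\mu_i,\hat\partial\mu_i)\hookrightarrow (\nu_{\mathrm{cyl}},\hat\partial\nu_{\mathrm{cyl}})|_{\xi_{\mathrm{cyl}}|_{M_i}}$ is a relative fiberwise deformation retract covering the identity on $\xi_{\mathrm{cyl}}|_{M_i}$. A point on the right is a pair $(x,\gamma)$ with $x\in \mu_{\mathrm{cyl}}$ and $\gamma$ a path in $\xi_{\mathrm{cyl}}$ from $p(x)$ to some $y\in \xi_{\mathrm{cyl}}|_{M_i}$; we must retract onto the subspace where $x\in \mu_i$ and $\gamma$ is the constant path at $p(x)=y$, fixing $y$ throughout. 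The fiber of $\mu_i$ over $y$ is the regular-neighborhood disk $p^{-1}(y)\subset D^N\times \{0\}$ of codimension $N-\dim \xi_i$ in $\xi_{\mathrm{cyl}}|_{M_i}$, while by Spivak's theorem the homotopy fiber of $\mu_{\mathrm{cyl}}\to \xi_{\mathrm{cyl}}$ at $y$ has exactly the same codimension and is a disk; the comparison between the two is facilitated by our careful choice of embedding, which puts $\xi_{\mathrm{cyl}}|_{M_i}$ in the face $D^N\times\{0\}$ while $\xi_{\mathrm{cyl}}$ sits in $D^N\times [0,1)$. Concretely the retraction is built in two stages: first contract $\gamma$ to the constant path at $y$ (keeping $y$ fixed, using that $\xi_{\mathrm{cyl}}|_{M_i}$ deformation retracts inside $\xi_{\mathrm{cyl}}$ and then straightening the path to constant), which drags $x$ along to some $x'\in p^{-1}(y)\subset \mu_{\mathrm{cyl}}$; then apply a second straight-line homotopy in $D^N\times[0,1)$ toward the face $D^N\times\{0\}$ to push $x'$ into $p^{-1}(y)\subset \mu_i$. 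The condition on $\hat\partial$ is preserved at each stage because $\partial \xi_{\mathrm{cyl}}$ and hence $\hat\partial \mu_{\mathrm{cyl}}$ were arranged to sit in the face $D^N\times\{0\}$ from the start. With all three hypotheses verified, Proposition \ref{prp:equiv} applies to each leg of the zigzag and yields the desired equivalences of relative fibrations over fibrations.
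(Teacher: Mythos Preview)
Your treatment of the bottom and middle rows matches the paper's: the mapping-cylinder retractions are classical, and lifting them to $(\xi_{\mathrm{cyl}},\hat\partial\xi_{\mathrm{cyl}})$ is a standard bundle argument. The divergence is in the top row, and there your argument has a genuine gap.

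You propose a two-stage retraction of $(\nu_{\mathrm{cyl}},\hat\partial\nu_{\mathrm{cyl}})|_{\xi_{\mathrm{cyl}}|_{M_i}}$ onto $(\mu_i,\hat\partial\mu_i)$: first contract the path $\gamma$ to the constant at $y=\gamma(1)$, ``which drags $x$ along to some $x'\in p^{-1}(y)$'', then slide $x'$ into the face $D^N\times\{0\}$ by a straight-line homotopy. Both stages are problematic. Contracting $\gamma$ toward its endpoint $y$ changes $\gamma(0)=p(x)$, so to stay in $\nu_{\mathrm{cyl}}$ you would need to lift this motion of $p(x)$ to a motion of $x$; but $p:\mu_{\mathrm{cyl}}\to\xi_{\mathrm{cyl}}$ is only a regular-neighborhood retraction of a Poincar\'e pair, not a fibration, so no such lift is available in general. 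The straight-line homotopy in $D^N\times[0,1)$ likewise has no reason to stay in a single fiber of $p$. Invoking Spivak's theorem to match fiber dimensions does not help here: knowing that two fibers are abstractly disks of the same dimension does not produce a fiberwise deformation retraction.

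The paper's argument is both simpler and does not use Spivak at all. It records four facts: $(\mu_i,\hat\partial\mu_i)$ is the restriction of $(\mu_{\mathrm{cyl}},\hat\partial\mu_{\mathrm{cyl}})$ over $\xi_{\mathrm{cyl}}|_{M_i}$; $(\nu_{\mathrm{cyl}},\hat\partial\nu_{\mathrm{cyl}})$ is the path-fibration replacement of $(\mu_{\mathrm{cyl}},\hat\partial\mu_{\mathrm{cyl}})$; $(\mu_i,\hat\partial\mu_i)\to\xi_{\mathrm{cyl}}|_{M_i}$ is already a relative fibration (indeed a disk bundle, by the tubular neighborhood theorem); and the path-fibration replacement of a relative fibration fiberwise-deformation-retracts onto the constant paths. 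The point is that the crucial lifting problem is posed not for $\mu_{\mathrm{cyl}}$ but for $\mu_i$, where it is solvable because $\mu_i$ is an honest disk bundle. Concretely, the constant-path inclusion $\mu_i\hookrightarrow\nu_{\mathrm{cyl}}|_{\xi_{\mathrm{cyl}}|_{M_i}}$ is a map of relative fibrations over $\xi_{\mathrm{cyl}}|_{M_i}$; since the inclusions $\mu_i\hookrightarrow\mu_{\mathrm{cyl}}$ and $\xi_{\mathrm{cyl}}|_{M_i}\hookrightarrow\xi_{\mathrm{cyl}}$ are equivalences, the map on (homotopy) fibers is an equivalence, and a fiberwise weak equivalence between fibrations over a common base is a fiberwise deformation retract. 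No appeal to the fiber being a disk, and hence no appeal to Spivak, is needed.
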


\begin{proof}
It is classical that the bottom is a zigzag of inclusions of relative deformation retracts. Standard arguments with fiber bundles will allow us to lift these relative deformation retracts to relative fiberwise deformation retracts of the center row. The top zigzag follows from four facts. First, the $(\mu_i,\hat\partial\mu_i)$ are defined as the restriction of $(\mu_{\mathrm{cyl}},\hat\partial \mu_{\mathrm{cyl}})$ to $\xi_{\mathrm{cyl(f)|_{M_i}}}$. Second, $(\nu_{\mathrm{cyl}},\hat\partial\nu_{\mathrm{cyl}})$ is defined as the path fibration replacement of $(\mu_{\mathrm{cyl}},\hat\partial\mu_{\mathrm{cyl}})$. Third, the $ (\mu_i,\hat\partial\mu_i)$ are relative fibrations (in particular disk bundles). Fourth, the path fibration replacement of a relative fibration deformation retracts (relatively and preserving the fibers) onto the constant paths.
 
\end{proof}

\begin{lem}\label{lem:cofib}
The relative fibrations of the previous lemma satisfy the cofibrancy hypothesis of Proposition \ref{prp:cofib}. Hence,

\[
(\xi_{\mathrm{cyl}}|_{M_i},\hat\partial\xi_{\mathrm{cyl}}|_{M_i})^{\bar\nu_i^I/\Delta^{\mathrm{fat}}} \simeq (\xi_{\mathrm{cyl}},\hat\partial\xi_{\mathrm{cyl}})^{\bar\nu_{\mathrm{cyl}}^I/\Delta^{\mathrm{fat}}}.
\]
\end{lem}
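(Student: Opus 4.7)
The plan is to verify the cofibrancy hypothesis of Proposition \ref{prp:cofib} for each of the three relative fibrations over fibrations in the zigzag of the previous proposition, and then chain that result together with Proposition \ref{prp:equiv}. Concretely, the asserted equivalence would be obtained by the zigzag
\[
(\xi_{\mathrm{cyl}}|_{M_i},\hat\partial\xi_{\mathrm{cyl}}|_{M_i})^{\bar\nu_i^I/\Delta^{\mathrm{fat}}}
\simeq (\xi_{\mathrm{cyl}}|_{M_i},\hat\partial\xi_{\mathrm{cyl}}|_{M_i})^{\nu_i^I/\Delta^{\mathrm{fat}}}
\simeq (\xi_{\mathrm{cyl}},\hat\partial\xi_{\mathrm{cyl}})^{\nu_{\mathrm{cyl}}^I/\Delta^{\mathrm{fat}}}
\simeq (\xi_{\mathrm{cyl}},\hat\partial\xi_{\mathrm{cyl}})^{\bar\nu_{\mathrm{cyl}}^I/\Delta^{\mathrm{fat}}},
\]
where the two outer equivalences use Proposition \ref{prp:cofib} (to pass between the cone and quotient Thom constructions) and the middle equivalence uses Proposition \ref{prp:equiv} applied to the zigzag of relative fibrations over fibrations established in the previous proposition.

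First I would verify the cofibrancy hypothesis for the disk bundles $(\mu_i,\hat\partial \mu_i)$ over $(\xi_{\mathrm{cyl}}|_{M_i},\hat\partial\xi_{\mathrm{cyl}}|_{M_i})$. Here the subspace $\mu_i|_{\hat\partial \xi_{\mathrm{cyl}}|_{M_i}}\cup \hat\partial \mu_i$ is a finite union of closed smooth submanifolds-with-corners inside $\mu_i$, assembled along common smooth faces. A standard application of the collar neighborhood theorem to these faces produces an explicit neighborhood deformation retraction, exhibiting the inclusion into $\mu_i$ as a Hurewicz cofibration. An identical collar argument, carried out on the cylinder geometry, gives the cofibration property for the subspace of $\mu_{\mathrm{cyl}}$ corresponding to $\mathrm{cyl}(f|_{\partial M_1})\cup M_1 \cup M_2$ together with $\hat\partial \mu_{\mathrm{cyl}}$. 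The cofibration property for $(\nu_{\mathrm{cyl}},\hat\partial\nu_{\mathrm{cyl}})$ (and the $\nu_i$ when interpreted as path-fibration replacements) then follows from the fact that the mapping path construction preserves Hurewicz cofibrations, a standard consequence of the Str\o m model structure.

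The main obstacle will be bookkeeping the cofibrancy check for the Spivak fibration: unlike the $\mu_i$, the object $\nu_{\mathrm{cyl}}$ carries no smooth structure, so one must carefully reduce the relevant cofibration to the smooth cofibration for $\mu_{\mathrm{cyl}}$ through the path space construction, and also check that the product-and-diagonal decorations ($E^I$ together with $\Delta^{\mathrm{fat}}$ and $\hat\partial(E^I)$) that appear in Proposition \ref{prp:cofib} interact properly with these inclusions. Once this reduction is carried out, the statement of the lemma follows by the formal zigzag displayed above.
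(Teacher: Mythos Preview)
Your approach is essentially the same as the paper's. The paper does not give a detailed proof of this lemma; it simply asserts that it follows ``using the cofibrancy of smooth submanifolds and direct arguments with neighborhood deformation retracts,'' which is exactly the line you take: collar arguments for the disk bundles $\mu_i$ and for $\mu_{\mathrm{cyl}}$, followed by a reduction for $\nu_{\mathrm{cyl}}$ through the path-space construction. Your explicit zigzag through the unbarred (cone) Thom constructions, invoking Proposition~\ref{prp:cofib} on the ends and Proposition~\ref{prp:equiv} in the middle, is precisely how the ``Hence'' clause is meant to be read.
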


\begin{proof}
We demonstrate the cofibrancy hypothesis for the relative fibration over a fibration over the mapping cylinder; it is made difficult by the necessity of taking path space fibrations. The other two follow from an easier, more direct argument.

   By \cite[Theorem 2]{Strom_1966}, a closed inclusion $W\subset X$ is a cofibration, if and only if, there is an open subset $U \supset W$ with a deformation retraction $H:U \times [0,1] \rightarrow X$ onto $W$, as well as a continuous function $\phi:X \rightarrow[0,1]$ such that $\phi^{-1}(0)=W$ and $\phi(X-U)=1$. 

   Suppose we have a map $f:E \rightarrow X$ such that $f^{-1}(A) \rightarrow E$ is a cofibration for a subspace $A \subset X$. We wish to show that \[(\mathrm{ev}^f)^{-1}(A) \rightarrow \mathrm{PathFib}(f)\]
is a cofibration. Suppose $U \supset f^{-1}(A),H:U \times [0,1] \rightarrow E, \phi: E \rightarrow [0,1]$ is the information witnessing that $f^{-1}(A) \rightarrow E$ is a cofibration. We have that $(\mathrm{ev}^f)^{-1}(U)$ is an open set which deformation retracts to $(\mathrm{ev}^f)^{-1}(A)$ in $\mathrm{PathFib}(f)$ as follows: For a path $\gamma$ ending in $U$, let $\gamma'_t$ denote the concatenation of $\gamma$, scaled to have domain  $[0,1-.5t\phi(\gamma(1))]$, with the path $\gamma(1)$ follows under $H$ until time $t$, scaled to have domain $[1-.5t\phi(\gamma(1)),1]$. Then the deformation retraction from $(\mathrm{ev}^f)^{-1}(U)$ to $(\mathrm{ev}^f)^{-1}(A)$ is \[((x,\gamma),t) \rightarrow (x,\gamma'_t)\]
And similarly, we can define a lift of $\phi$ to $\mathrm{PathFib}(f)$ by
\[(x,\gamma) \rightarrow \phi(\gamma(1)).\]
By construction, these satisfy the requirements to show \[(\mathrm{ev}^f)^{-1}(A) \rightarrow \mathrm{PathFib}(f)\] is a cofibration. Hence, by the definition of \[(\nu_{\mathrm{cyl}},\hat \partial\nu_{\mathrm{cyl}}) \rightarrow (\xi_{\mathrm{cyl}},\hat\partial \xi_{\mathrm{cyl}}) \rightarrow (\mathrm{cyl}(f), \mathrm{cyl}(f|_{\partial M_1}) \cup \partial M_2),\] to demonstrate the necessary cofibrancy of Proposition \ref{prp:cofib} it suffices to show the corresponding result for regular neighborhoods. Explicitly, that \[\mu_{\mathrm{cyl}}|_{(\xi_{\mathrm{cyl}}|_{ (\mathrm{cyl}(f|_{\partial M_1}) \cup \partial M_2)} \cup \hat\partial \xi_{\mathrm{cyl}})} \cup \hat\partial \mu_\mathrm{cyl} \subset \mu_\mathrm{cyl}\] is a cofibration. One could directly argue from the simplicial hypothesis on $F$ that all of these subsets have regular neighborhoods which is enough to construct the necessary deformation retraction of \cite[Theorem 2]{Strom_1966} mentioned above. We find it simpler to appeal to the classification of finite dimensional ANR's as locally contractible spaces \cite[Page 240]{Borsuk1932}, and the fact that a closed subspace of an ANR is an ANR, if and only if, the inclusion is a cofibration \cite[Proposition A.6.7]{Fritsch_Piccinini_1990}. Since this inclusion is a closed inclusion of locally contractible, finite dimensional spaces, we conclude that it is a cofibration.

With the cofibrancy hypothesis demonstrated, we may now apply Proposition \ref{prp:cofib} to the diagram of Proposition \ref{prp:squaresquare} which yields the claimed equivalences.
\end{proof}








\section{Cubes of configuration spaces}

In order to understand the layers of the embedding tower, it is apparent that we need to study cubes of framed configuration spaces. In section \ref{sec:fib}, we sketched an argument using the Dold-Puppe functor $\mathrm{DP}$, that the stabilized framed configuration spaces were tangential homotopy invariants. However, the functor $\mathrm{DP}$ is natural only with respect to embeddings, so  we must find a cube of inclusions which is equivalent to the cube of configuration space projections.
In order to attack this, we make repeated use of the bundle and fibration constructions of the previous two sections.

Let $(\xi,\hat\partial\xi) \rightarrow (M,\partial M)$ be a relative smooth manifold bundle over a compact, smooth manifold $M$ with a fixed embedding $(\xi,\partial \xi) \rightarrow (D^N,S^{N-1})$ preserving the boundary. Fix a normal embedded disk bundle $\iota: \mu \rightarrow D^n$ for this embedding.
\begin{dfn}
The contravariant $I$-cube of $\xi$-framed configurations $F^{\xi}(M,2^I)$ is defined by \[K \subset J \]\[ (\xi^J)|_{F(M,J)} \xrightarrow{\mathrm{pr}_K} (\xi^K)|_{F(M,K)}. \]
\end{dfn}

\begin{dfn}
The contravariant $I$-cube of open $\xi$-framed configurations $\mathring{F}^{\xi}(M,2^I)$ is defined as the restriction of the cube of manifolds $F^{\xi}(M,2^I)$ to their interiors.

\end{dfn}

Clearly, the open versions obtained by restriction to the interiors are equivalent to the closed versions of these cubes.

We are to study the situation of Section \ref{sec:poincare}, where we have a relative fiberwise equivalence $F:\xi_1 \rightarrow \xi_2$ covering $f:(M_1,\partial M_1) \rightarrow (M_2,\partial M_2)$. Unfortunately, the mapping cylinder of $F$ is not obviously a fibration over $\mathrm{cyl}(f)$, so instead we will work with the bundle $\xi_{\mathrm{cyl}}$, defined in the previous section as the pullback of $\xi_2$ to $\mathrm{cyl}(f)$. Define $(\xi'_i, \hat\partial \xi'_i)$ as the restriction $(\xi_{\mathrm{cyl}}|_{M_i},\hat\partial\xi_{\mathrm{cyl}}|_{M_i})$ which appears in Proposition \ref{prp:equiv}.





\begin{prp}
There are equivalences of cubes 
\[F^{\xi_i}(M_i,2^I) \simeq F^{\xi'_i}(M_i,2^I).\]
\[\mathring{F}^{\xi_i}(M_i,2^I) \simeq \mathring{F}^{\xi'_i}(M_i,2^I).\]
\end{prp}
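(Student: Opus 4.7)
The plan is to reduce the equivalence of cubes to the single observation that, by the construction of $\xi_{\mathrm{cyl}}$, the bundle $\xi'_i := \xi_{\mathrm{cyl}}|_{M_i}$ is relatively fiberwise homotopy equivalent to $\xi_i$ over $\mathrm{id}_{M_i}$, and then to promote this coordinatewise fiberwise equivalence to an equivalence of the full cube by a tautological naturality argument in $J$.

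First I would make precise the relative fiberwise equivalence $\psi_i : (\xi_i, \hat\partial\xi_i) \simeq (\xi'_i, \hat\partial\xi'_i)$ over $\mathrm{id}_{M_i}$. For $i = 2$, the bundle $\xi'_2 = \xi_{\mathrm{cyl}}|_{M_2}$ is canonically $\xi_2$ by the pullback construction of $\xi_{\mathrm{cyl}}$. For $i = 1$, unwinding the definition $\xi_{\mathrm{cyl}} = r^*(\xi_2)$ together with the fact that $r$ restricts on $M_1 \subset \mathrm{cyl}(f)$ to a map equivalent to $f$ yields $\xi'_1 \simeq f^*(\xi_2)$; the original relative fiberwise equivalence $F : \xi_1 \to \xi_2$ covering $f$ then factors canonically through $f^*(\xi_2)$, producing the desired $\xi_1 \simeq \xi'_1$.

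Next I would take the $J$-fold external product $\psi_i^J : \xi_i^J \to (\xi'_i)^J$ over $M_i^J$ and restrict over the inclusion $F(M_i, J) \hookrightarrow M_i^J$ to obtain a map $F^{\xi_i}(M_i, J) \to F^{\xi'_i}(M_i, J)$. Each such restriction is a fiberwise equivalence over $F(M_i, J)$, hence a weak equivalence. The required naturality square for $K \subset J$,
\[\begin{tikzcd}
F^{\xi_i}(M_i, J) \arrow[r] \arrow[d, "\mathrm{pr}_K"'] & F^{\xi'_i}(M_i, J) \arrow[d, "\mathrm{pr}_K"] \\
F^{\xi_i}(M_i, K) \arrow[r] & F^{\xi'_i}(M_i, K),
\end{tikzcd}\]
commutes tautologically, since $\mathrm{pr}_K$ merely forgets the coordinates indexed by $J \setminus K$ while $\psi_i^J$ acts coordinatewise; hence the vertexwise equivalences assemble into an equivalence of $I$-cubes. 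Restricting to interiors of $M_i$ (which commutes with external products, restriction to configurations, and with $\psi_i^J$) produces the open version.

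I expect the main obstacle to be not in the cube argument itself, which is essentially formal once the underlying fiberwise equivalence is set up, but rather in verifying that the fiberwise equivalence $\psi_i$ genuinely respects the relative subfibration $\hat\partial$ in the sense of Definition \ref{def:fibeq}, so that it interfaces cleanly with the Thom-space machinery of the previous section. In particular, $\psi_i$ may need to be realized as a short zigzag covering $\mathrm{id}_{M_i}$ rather than a single map if one insists on interpreting $\xi_{\mathrm{cyl}}$ strictly as a pullback; such a zigzag still induces a vertexwise zigzag of equivalences of cubes, which is the content of the proposition.
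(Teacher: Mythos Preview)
Your proposal is correct and follows the same approach as the paper: for $i=2$ one has literal equality since $\xi_{\mathrm{cyl}}|_{M_2}=\xi_2$, and for $i=1$ one invokes the assumed relative fiberwise equivalence $(\xi_1,\hat\partial\xi_1)\simeq(\xi_2,\hat\partial\xi_2)$ (equivalently $\xi_1\simeq f^*(\xi_2)=\xi'_1$), then promotes to cubes via external products and naturality under projections. Your write-up is considerably more detailed than the paper's two-line proof, and your caveat about possibly needing a zigzag is harmless but unnecessary here.
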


\begin{proof}
By the definition of $\xi_{\mathrm{cyl}}$, if $i=2$, $F^{\xi_i}(M_i,2^I) = F^{\xi'_i}(M_i,2^I)$. If $i=1$, it follows since $(\xi_1,\hat\partial \xi_1) \simeq (\xi_2,\hat\partial \xi_2)$ through $F$. The statement for open configurations is analogous.
\end{proof}

The following cube is the key to allowing us to apply Dold-Puppe duality. It models the homotopy type of the projection maps between configuration spaces by open inclusions.

\begin{dfn}\label{def:thick}
The contravariant cube $\mu^{2^I}$ is defined by \[K \subset J \] 
\[
\mu^J \times (D^N)^{I -J} \xrightarrow{\prod_{k \in K}\operatorname{id}_{\mu} \times \prod_{j \in J -K} \iota \times \prod_{i \in I-J} \operatorname{id}_{D^n}} \mu^K \times (D^N)^{I -K}
\]
\end{dfn}


\begin{dfn}
The contravariant cube $\mathring{\mu}^{2^I}_{\mathrm{conf}}$ is defined by the  restriction of $\mu^{2^I}(J)$ to \[(\mu^J - \hat\partial(\mu^J))|_{\mathring{F}^{\xi}(M,2^I)(J)} \times (D^N-S^{N-1})^{I-J}.\]
\end{dfn}

Note, this cube is actually a cube in $\mathrm{Open}(\mathbb{R}^{NI})$.

\begin{prp}
There is an equivalence of cubes
\[\mathring{F}^{\xi}(M,2^I)\simeq \mathring\mu^{2^I}_{\mathrm{conf}}.\]
\end{prp}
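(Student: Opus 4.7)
The plan is to construct a strict map of $I$-cubes $\Phi: \mu^{2^I}_{i,\mathrm{conf}} \to \mathring{F}^{\xi'_i}(M_i, 2^I)$ and show that each component $\Phi_J$ is a weak equivalence. Because $\mu_i$ is a regular (in fact tubular) neighborhood of the embedded submanifold $\xi_{\mathrm{cyl}}|_{M_i} \subset D^N$, it carries a disk bundle projection $\pi_i: \mu_i \to \xi_{\mathrm{cyl}}|_{M_i}$ whose associated open disk bundle is exactly $\mu_i - \hat\partial \mu_i$. I define $\Phi_J$ by
\[
((x_j)_{j \in J}, (y_k)_{k \in I-J}) \longmapsto (\pi_i(x_j))_{j \in J},
\]
simply discarding the $(D^N - S^{N-1})$-factors. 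This lands in $(\xi_{\mathrm{cyl}}|_{M_i})^J|_{F(\mathring{M}_i, J)} = \mathring{F}^{\xi'_i}(M_i,2^I)(J)$ because the input is already restricted over the configuration locus and $\pi_i$ covers the identity on $M_i$.

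To see that each $\Phi_J$ is a homotopy equivalence, observe that $(D^N - S^{N-1})^{I-J}$ is contractible and that $(\mu_i^J - \hat\partial \mu_i^J)|_{F(\mathring{M}_i, J)}$ is the restriction of the external product of the open disk bundle $\mu_i - \hat\partial \mu_i$ over the configuration locus. This restriction is still a fiber bundle over $(\xi_{\mathrm{cyl}}|_{M_i})^J|_{F(\mathring{M}_i, J)}$ with fibers a product of open disks, hence deformation retracts onto its zero section, which is exactly the target.

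For cube compatibility, fix $K \subset J$. The structure map of $\mu^{2^I}_{i,\mathrm{conf}}$ prescribed by Definition \ref{def:thick} preserves the $K$-coordinates in $\mu_i$, sends the $(J-K)$-coordinates into $D^N$ via $\iota_i$, and preserves the $(I-J)$-coordinates; composing with $\Phi_K$ then forgets every factor except $(\pi_i(x_k))_{k \in K}$. On the other hand, $\Phi_J$ followed by projection to the $K$-coordinates produces the same tuple $(\pi_i(x_k))_{k \in K}$, so the two composites agree strictly, not merely up to homotopy. The only point requiring care is verifying that the restriction conventions are consistent throughout, namely that projection of a configuration remains a configuration and that $\iota_i$ does not alter the underlying point in $M_i$; both are immediate from the factorization $\mu_i \to \xi_{\mathrm{cyl}}|_{M_i} \to M_i$, so I do not foresee a genuine obstacle.
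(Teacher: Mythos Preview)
Your argument is correct and rests on the same observation as the paper's one-line proof: $\mathring{\mu}^{2^I}_{i,\mathrm{conf}}(J)$ is an open disk bundle (times a contractible factor) over $\mathring{F}^{\xi'_i}(M_i,2^I)(J)$. The paper phrases the equivalence as the zero-section inclusion $\mathring{F}^{\xi'_i}(M_i,2^I)\hookrightarrow \mathring{\mu}^{2^I}_{i,\mathrm{conf}}$, whereas you use the bundle projection $\Phi$ in the opposite direction. Your choice is actually the cleaner one at the cube level: $\Phi$ commutes \emph{strictly} with the structure maps, as you check, while the zero-section inclusion does not literally commute on the $(J\setminus K)$-coordinates (one composite yields $\iota_i(s(z_j))$, the other the fixed basepoint in $D^N\setminus S^{N-1}$). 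Making the paper's stated direction into an honest map of cubes therefore implicitly requires your retraction $\Phi$ anyway, so the two arguments are really the same idea with yours filling in the cube-compatibility detail.
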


\begin{proof}
The former includes into the latter as the inclusion of a space into a disk bundle over that space, hence it is an equivalence.
\end{proof}

\begin{dfn}
The covariant cube $(\xi,\hat\partial  \xi)^{\mu^{2^I}/\Delta^{\mathrm{fat}}}$ is given by $\mathrm{DP}(\mathring{\mu}^{2^I}_{\mathrm{\mathrm{conf}}})$.
\end{dfn}
We justify this name by observing that, objectwise, there is an equivalence
\[\mathrm{DP}(\mu^{2^I}_{\mathrm{\mathrm{conf}}})(J) \simeq (\xi,\hat\partial \xi)^{\mu^J/\Delta^{\mathrm{fat}}} \wedge (S^N)^{\wedge I-J}\]
which comes from examining the boundary of $\mathring\mu^{2^I}_{\mathrm{conf}}(J)$ as a subspace of $\mathbb{R}^{NI}$.
\begin{prp} \label{prp:duality}

There is an equivalence of cubes \[\Sigma^\infty_+\mathring{\mu}^{2^I}_{\mathrm{conf}} \simeq\Sigma^{NI}(\Sigma^\infty (\xi,\hat\partial  \xi)^{\mu^{2^I}/\Delta^{\mathrm{fat}}} )^\vee).\] 

\end{prp}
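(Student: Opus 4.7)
The plan is to apply Dold--Puppe duality objectwise to the cube $\mathring{\mu}^{2^I}_{i,\mathrm{conf}}$ and use the naturality of the Dold--Puppe pairing with respect to open inclusions to promote this to an equivalence of cubes. Because the cube $\mathring{\mu}^{2^I}_{i,\mathrm{conf}}$ lives in $\mathrm{Open}(\mathbb{R}^{NI})$ and its transition maps are open inclusions (each map inserts an open disk factor in the direction of $I - J$ and retracts the open bundle factors by the tubular neighborhood embedding $\iota_i$), this is exactly the setting where the naturality of Dold--Puppe applies.

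First I would verify that each value $\mathring{\mu}^{2^I}_{i,\mathrm{conf}}(J)$ is a behaved subset of $\mathbb{R}^{NI}$ in the sense defined earlier. The point is that $\mu_i$ is a regular neighborhood of the compact simplicial complex $\xi_{\mathrm{cyl}}|_{M_i}$, hence $\mu_i - \hat\partial\mu_i$ is homeomorphic to the interior of a compact manifold with boundary. Restricting to the open configuration base $\mathring{F}^{\xi'_i}(M_i, 2^I)(J)$ and multiplying by open disks $(D^N - S^{N-1})^{I-J}$ preserves this property. The proposition stated earlier (``if $U \subset \mathbb{R}^n$ is homeomorphic to the interior of a compact manifold with boundary then $U$ is behaved'') then gives the objectwise Dold--Puppe duality pairing
\[
\bigl(\mathring{\mu}^{2^I}_{i,\mathrm{conf}}(J)\bigr)_+ \wedge \mathrm{DP}\bigl(\mathring{\mu}^{2^I}_{i,\mathrm{conf}}(J)\bigr) \longrightarrow S^{NI},
\]
i.e. the equivalence
\[
\Sigma^\infty_+ \mathring{\mu}^{2^I}_{i,\mathrm{conf}}(J) \simeq \Sigma^{NI}\bigl(\Sigma^\infty \mathrm{DP}(\mathring{\mu}^{2^I}_{i,\mathrm{conf}})(J)\bigr)^\vee.
\]
By definition, $\mathrm{DP}(\mathring{\mu}^{2^I}_{i,\mathrm{conf}}) = (\xi_i',\hat\partial\xi_i')^{\mu_i^{2^I}/\Delta^{\mathrm{fat}}}$, so this matches the right-hand side of the proposition evaluated at $J$.

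Next I would promote this from an objectwise to a cubewise equivalence. The Dold--Puppe pairing is covariant in the first variable and contravariant in the second with respect to open inclusions. Since all transition maps in $\mathring{\mu}^{2^I}_{i,\mathrm{conf}}$ are open inclusions, the collection of adjoints $\Sigma^\infty_+ \mathring{\mu}^{2^I}_{i,\mathrm{conf}} \to \Sigma^{NI}(\Sigma^\infty \mathrm{DP}(\mathring{\mu}^{2^I}_{i,\mathrm{conf}}))^\vee$ assembles into a natural transformation of $I$-cubes whose value on each $J$ is an equivalence; thus the transformation itself is an equivalence of cubes.

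The subtlest point is the manifold-with-boundary identification of $\mathring{\mu}^{2^I}_{i,\mathrm{conf}}(J)$: the space $\mu_i^J$ is a $J$-fold product of manifolds with boundary, and restricting over the open configuration base requires removing the fat diagonal. I would handle this by noting that the restriction is the complement in $\mu_i^J \times (D^N - S^{N-1})^{I-J}$ of the closed set consisting of $\hat\partial(\mu_i^J)$ together with the preimage of $\Delta^{\mathrm{fat}}$ under the bundle projection, and both of these are finite unions of smooth closed subsets of a compact manifold. This description realizes the space as the interior of a compact manifold with corners, which, after smoothing corners, is the interior of a compact manifold with boundary, putting us in the scope of the earlier proposition. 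With this verified, the rest of the argument is formal.
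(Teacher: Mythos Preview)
Your overall strategy is exactly the paper's: invoke the naturality of the Dold--Puppe pairing with respect to open inclusions to assemble the objectwise duality into a map of cubes, and then check that each $\mathring{\mu}^{2^I}_{i,\mathrm{conf}}(J)$ is behaved so that the objectwise map is an equivalence. That part is correct and matches the paper essentially verbatim.

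The one place where your argument is weaker than the paper's is the justification of behavedness. You write that the space is the complement, in a compact manifold, of ``finite unions of smooth closed subsets'' and that ``this description realizes the space as the interior of a compact manifold with corners.'' That implication is not valid in general: removing a closed (even smooth, even stratified) subset from a compact manifold need not produce the interior of a compact manifold with boundary. The fat diagonal $\Delta^{\mathrm{fat}} \subset M_i^J$ is a union of submanifolds meeting along lower strata, and the existence of a manifold-with-corners compactification of its complement is precisely the content of the Fulton--MacPherson construction, not a formal consequence of ``remove closed subsets and smooth corners.'' The paper handles this cleanly by observing that $\mathring{\mu}^{2^I}_{i,\mathrm{conf}}(J)$ is a disk bundle over $F(\mathring{M}_i,J)$, and $F(\mathring{M}_i,J)$ is the interior of its Fulton--MacPherson compactification $\mathcal{F}_{M_i}(J)$, a compact manifold with corners; hence the total space is the interior of a compact manifold with boundary and the earlier proposition applies. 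You should replace your final paragraph with this argument.
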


\begin{proof}
This follows from the naturality of the Dold-Puppe pairing and the fact that $\mathring{\mu}^{2^I}_{\mathrm{conf}}(J)$ is behaved by Proposition \ref{prp:behaved} since it is a bundle over $F(\mathring{M},J)$, which is the interior of the Fulton-MacPherson compactification of $F(M,J)$.
\end{proof}

\begin{dfn}
The covariant cube $(\xi,\hat\partial \xi)^{\bar{\mu}^{2^I}/\Delta^{\mathrm{fat}}}$ at $J \subset I$ is given by collapsing the subspace of $(\xi,\hat\partial  \xi)^{\mu^{2^I}/\Delta^{\mathrm{fat}}}(J)$ in which a coordinate labeled by an element of $J$ is in $\partial \mu$ or a coordinate labeled by an element of $J -I$ is in $S^{N-1}$.
\end{dfn}
This name is justified because there is a homeomorphism
\[(\xi,\hat\partial \xi)^{\bar{\mu}^{2^I}/\Delta^{\mathrm{fat}}}(J) \cong (\xi,\hat\partial \xi)^{\bar{\mu}^J/\Delta^{\mathrm{fat}}} \wedge (S^N)^{I-J}.\]

By Lemma \ref{lem:cofib} and Proposition \ref{prp:cofib} we see:

\begin{prp}
The collapse map $(\xi,\hat\partial \xi)^{\mu^{2^I}/\Delta^{\mathrm{fat}}} \rightarrow (\xi,\hat\partial \xi)^{\bar{\mu}^{2^I}/\Delta^{\mathrm{fat}}}$ is an equivalence.

\end{prp}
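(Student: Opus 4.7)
The plan is to reduce to a levelwise application of Proposition \ref{prp:cofib}, whose cofibrancy hypothesis is supplied by Lemma \ref{lem:cofib}, and then check that the resulting equivalence is natural with respect to the cube structure maps.

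First, I would unpack the objectwise description. Using the identifications recorded just after Definition \ref{def:thick}, at each vertex $J \subset I$ the map in question is the smash product of the collapse map
\[
(\xi'_i,\hat\partial \xi'_i)^{\mu_i^J/\Delta^{\mathrm{fat}}} \longrightarrow (\xi'_i,\hat\partial \xi'_i)^{\bar{\mu}_i^J/\Delta^{\mathrm{fat}}}
\]
with the identity on $(S^N)^{\wedge (I-J)}$. The cofibrancy of the inclusion $\mu_i|_{\xi'_i|_{\partial M_i} \cup \hat\partial \xi'_i} \cup \hat\partial \mu_i \hookrightarrow \mu_i$ is exactly what Lemma \ref{lem:cofib} verifies, so Proposition \ref{prp:cofib} applies and gives an equivalence at each vertex.

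Second, I would verify naturality in $J$ so that the vertexwise equivalences assemble into an equivalence of cubes. The structure map $J \supset K$ in $\mu_i^{2^I}$ is, by Definition \ref{def:thick}, the identity on the $K$ and $I-J$ factors and the inclusion $\iota_i : \mu_i \hookrightarrow D^N$ on the $J-K$ factors. Under the smash decomposition above, applying one-point compactification to these open inclusions produces exactly the structure maps of the target cube $(\xi'_i,\hat\partial \xi'_i)^{\bar{\mu}_i^{2^I}/\Delta^{\mathrm{fat}}}$, and cone-collapse is a natural transformation between the reduced Thom space constructions of Proposition \ref{prp:cofib}. Hence the collapse maps commute with the cube structure, giving a levelwise equivalence of cubes.

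The only substantive bookkeeping is checking that taking the external product $(\mu_i,\hat\partial\mu_i)^J \times (D^N, S^{N-1})^{I-J}$ preserves the cofibrancy hypothesis required for Proposition \ref{prp:cofib} at the vertex $J$, and that the fat diagonal and boundary subspaces being collapsed match up under the smash decomposition. Both follow from the product of a cofibration with a cofibration being a cofibration, together with the elementary identity $(X,A)^{E \times F} \cong (X,A)^E \wedge (F/\hat\partial F)$ for a trivial factor $F$. I expect this to be the main routine obstacle, but no genuinely new input is needed beyond Lemma \ref{lem:cofib} and Proposition \ref{prp:cofib}.
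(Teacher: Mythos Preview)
Your proposal is correct and matches the paper's approach exactly: the paper simply writes ``By Lemma \ref{lem:cofib} and Proposition \ref{prp:cofib}'' before stating the proposition, with no further argument. Your version spells out the vertexwise application and the naturality check in more detail than the paper does, but the underlying strategy is identical.
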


At this point, Proposition \ref{prp:duality} allows us to conclude a point-set model of the Spanier--Whitehead dual of the cube of $\xi_i$-framed configurations is
\[\Sigma^\infty (\xi'_i,\hat\partial \xi'_i)^{\bar{\mu}^{2^I}/\Delta^{\mathrm{fat}}}.\]
This means that there is a zigzag of equivalences of contravariant cubes \[\Sigma^\infty_+ F^{\xi_1}(M_1,2^I) \simeq \dots \simeq \Sigma^\infty_+ F^{\xi_2}(M_2,2^I),  \] if and only if, there is a zigzag of equivalences of covariant cubes \[\Sigma^\infty (\xi'_1,\hat\partial \xi'_1)^{\bar{\mu}_1^{2^I}/\Delta^{\mathrm{fat}}}\simeq  \dots \simeq \Sigma^\infty (\xi'_2,\hat\partial \xi'_2)^{\bar{\mu}_2^{2^I}/\Delta^{\mathrm{fat}}}.\]

Recall that $(\nu_{\mathrm{cyl}},\hat\partial \nu_{\mathrm{cyl}})$ is the path fibration of the projection of a regular neighborhood $p:(\mu_{\mathrm{cyl}},\hat\partial \mu_{\mathrm{cyl}})\rightarrow \xi_{\mathrm{cyl}}$. In other words, the points consist of pairs $(x,\gamma)$ of a point $x \in \mu_{\mathrm{cyl}}$ and a path $\gamma:[0,1]\rightarrow \xi_{\mathrm{cyl}(f)}$ such that $\gamma(0)=p(x)$. The projection to $\xi_{\mathrm{cyl}(f)}$ is defined to be $\gamma(1)$.

 We reiterate that for a relative fibration over a fibration \[(E,\hat\partial E)\rightarrow (\rho,\hat\partial \rho) \rightarrow (X,A)\] the notation $(\rho,\hat\partial \rho)^{\bar{E}^J/\Delta^{\mathrm{fat}}}$ makes both $\hat\partial E$ and $A$ implicit. Our next definition concerns the relative fibration over a fibration
\[(\nu_{\mathrm{cyl}},\hat\partial \nu_{\mathrm{cyl}})\rightarrow (\xi_{\mathrm{cyl}},\hat\partial\xi_{\mathrm{cyl}}) \rightarrow (\mathrm{cyl}(f),\mathrm{cyl}(f|_{\partial M_1}) \cup \partial M_2) \]

\begin{dfn}\label{def:cyl}
The covariant cube $(\xi_{\mathrm{cyl}},\hat\partial \xi_{\mathrm{cyl}})^{\bar\nu_{\mathrm{cyl}}^{2^I}/\Delta^{\mathrm{fat}}}$ is given by 

\[K \subset J:\]
\[
(\xi_{\mathrm{cyl}},\hat\partial\xi_{\mathrm{cyl}}))^{\bar\nu_{\mathrm{cyl}}^K/\Delta^{\mathrm{fat}}} \wedge (((D^N-S^{N-1}) \times [0,1])^+)^{I-K}\]\[ \downarrow \]\[(\xi_{\mathrm{cyl}},\hat\partial\xi_{\mathrm{cyl}}))^{\bar\nu_{\mathrm{cyl}}^J/\Delta^{\mathrm{fat}}} \wedge (((D^N-S^{N-1}) \times [0,1])^+)^{I-J} \]
We use the convention that elements not in the coned subspace have a cone coordinate of $0$. The function is given coordinate wise as follows:

\begin{itemize}

\item The function acts as the identity on the cone coordinate.

\item If $k \in K$, the $k$th coordinate of a point in the domain is represented by a point in $\nu_{\mathrm{cyl}}$ which consists of an element $x_k$ of $\mu_{\mathrm{cyl}}$ and a path $\gamma_k: [0,1] \rightarrow \xi_{\mathrm{cyl}}$ such that $\gamma_k(0)=p(x_k)$. The $k$th coordinate of the image will be the same element $x_k$ and the same path $\gamma_k$.

\item If $j \in J-K$, the $j$th coordinate of a point in the domain is represented by a point $x_j$ in $((D^N-S^{N-1})\times [0,1])^+$. If $x_j \in \mu_{\mathrm{cyl}}$, the $j$th coordinate of the image will be the pair of the element $x_j$ and the constant path at $p(x_j)$. Otherwise it will be the basepoint.

\item If $i \in I-J$, the $i$th coordinate is represented by a point $x_i$ in $(((D^N-S^{N-1}) \times [0,1])^+$, and the image will be $x_i$.

\end{itemize}

\end{dfn}

Continuity of the above construction is straightforward. There are two things to check: if $(\gamma_k(1))_{k \in K}$ projects to $\Delta^{\mathrm{fat}}(\mathrm{cyl}(f)^K)$, then the $J$-labeled coordinates of the image must project to $\Delta^{\mathrm{fat}}(\mathrm{cyl}(f)^J)$; and if $x_j$ for $j \in J-K$ is near $\partial \mu_{\mathrm{cyl}}$, then the image is near $* \in (\xi_{\mathrm{cyl}},\hat\partial\xi_{\mathrm{cyl}}))^{\bar\nu_{\mathrm{cyl}}^J/\Delta^{\mathrm{fat}}} \wedge (((D^N-S^{N-1}) \times [0,1])^+)^{I-J}$.

The first is immediate from the definition of the fat diagonal. The second is true because on these coordinates the map is the one point compactification of an open inclusion (together with the information of the constant path $p(x_j)$ if $j \in J-K$).

\begin{thm}
If  $\xi \rightarrow M$ is a fiber bundle with compact manifold fibers over a smooth manifold $M$,
the cube $\Sigma^\infty_+F^\xi(M,2^I)$ is a relative fiberwise homotopy invariant of $(\xi,\hat\partial\xi) \rightarrow (M,\partial M)$.
\end{thm}

\begin{proof}
 There is a zigzag of inclusions
\[\Sigma^\infty(\xi'_1,\hat\partial \xi'_1)^{\bar\mu_1^{2^I}/\Delta^{\mathrm{fat}}} \xrightarrow{\simeq}  
\Sigma^\infty(\xi_{\mathrm{cyl}},\hat\partial \xi_{\mathrm{cyl}})^{\bar\nu_{\mathrm{cyl}}^{2^I}/\Delta^{\mathrm{fat}}}
\xleftarrow{\simeq }\Sigma^\infty(\xi'_2,\hat\partial \xi'_2)^{\bar\mu_2^{2^I}/\Delta^{\mathrm{fat}}}\]
which are weak equivalences by Lemma \ref{lem:cofib}, so we are done.
\end{proof}


\begin{thm}
The layers of any $N$-stable embedding tower are a relative tangential homotopy invariant of $(N,\partial N)$.
\end{thm}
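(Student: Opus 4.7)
The plan is to combine Weiss's description of the layers with the invariance theorem for cubes of framed configurations proved immediately before. Weiss's theorem identifies $D_i(F)$ with a space of sections of a bundle over $F(\mathring{M},i)/\Sigma_i$ whose fiber over a configuration $\{x_1,\dots,x_i\}$ is the total homotopy fiber of the contravariant $[i]$-cube $J \mapsto F(\sqcup_{j \in J} B_\epsilon(x_j))$, subject to a constancy condition near the fat diagonal. Since the base space depends only on $M$, all dependence on $N$ is concentrated in the fibers.

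For $F = G \circ \Sigma^\infty_+ \mathrm{Emb}(-,\mathring{N})$, I would identify the fiber cube, by radial contraction on each disk, with
\[ J \mapsto G(\Sigma^\infty_+ F^{\mathrm{fr}}(\mathring{N}, J)), \]
so that the fiber of the Weiss bundle becomes the total homotopy fiber of $G \circ \Sigma^\infty_+ F^{\mathrm{fr}}(\mathring{N},2^{[i]})$. Applying the preceding theorem with $\xi$ taken to be the orthonormal frame bundle of $N$, any relative tangential homotopy equivalence $f : (N,\partial N) \to (N',\partial N')$ lifts to a relative fiberwise homotopy equivalence of frame bundles, and hence yields an equivalence of cubes $\Sigma^\infty_+ F^{\mathrm{fr}}(\mathring{N},2^I) \simeq \Sigma^\infty_+ F^{\mathrm{fr}}(\mathring{N'},2^I)$. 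Because $G$ preserves weak equivalences and total homotopy fiber is a homotopy functor on cubes, this gives a pointwise equivalence of fibers of the Weiss bundles.

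The remaining step is to promote this pointwise equivalence into an equivalence of section spaces over the common base $F(\mathring{M},i)/\Sigma_i$. The crucial observation is that the identification of the fiber cube with the framed configuration cube uses only the local smooth structure of $\mathring{M}$ near each $x_j$, so it is natural and $\Sigma_i$-equivariant in the basepoints, while the equivalence induced by $f$ does not depend on the $\{x_j\}$ at all. This assembles the pointwise equivalences into a $\Sigma_i$-equivariant fiberwise equivalence of bundles over $F(\mathring{M},i)/\Sigma_i$, and the boundary condition near the fat diagonal is automatically respected because both fiber cubes degenerate when two points coincide. The main obstacle I anticipate is this last bookkeeping step: arranging the equivalence of fibers to be genuinely parametrized over $F(\mathring{M},i)/\Sigma_i$ rather than merely pointwise, and to be $\Sigma_i$-equivariant, so that one can pass to section spaces without losing coherence.
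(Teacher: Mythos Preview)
Your proposal is correct and follows essentially the same route as the paper's proof: identify the fiber of the Weiss section bundle with the total homotopy fiber of $G \circ \Sigma^\infty_+ F^{\mathrm{fr}}(\mathring{N},2^{[i]})$, apply the cube invariance theorem with $\xi$ the orthonormal frame bundle, and then observe that the zigzag of equivalences produced there depends only on the fixed auxiliary embedding of $\xi_{\mathrm{cyl}}$ and not on the indexing set, so it parametrizes $\Sigma_i$-equivariantly over $F(\mathring{M},i)/\Sigma_i$. The paper phrases this last step slightly differently---emphasizing that the construction depends only on the fixed embedding and ``nothing about the set $I$ itself''---but your formulation that ``the equivalence induced by $f$ does not depend on the $\{x_j\}$ at all'' is the same observation.
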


\begin{proof}
Suppose our functor is of the form $G( \Sigma^\infty_+ \mathrm{Emb}(-,\mathring{N}))$. The $k$th layer of the tower is classified by the bundle over $F(\mathring{M},k)/\Sigma_k$ with fibers over $\{x_i\}$ given by $\mathrm{totfib}(G( \Sigma^\infty_+ F^{\mathrm{fr}}(\mathring{N},2^{\{x_i\}}))$. By applying $G$ to the constructions of this section and letting $\xi$ be the frame bundle, we have that the cube 
\[J \subset \{x_i\}\]
\[J \rightarrow G(\Sigma^\infty_+ F^{\mathrm{fr}}(\mathring{N},J)),\] and thus its total homotopy fiber, are relative tangential homotopy invariants of $ (N,\partial N)$.

As the argument for the invariance of the cube depended only on a fixed embedding of $\xi_{\mathrm{cyl}}$, and nothing about the set $I$ itself, we can parametrize this over the cardinality $k$ subsets of $\mathring{M}$, and we deduce the classifying fibration is a relative tangential homotopy invariant which yields the result.

\end{proof}

\bibliographystyle{plain}
\bibliography{main.bib}







\end{document}